\numberwithin{equation}{section}
\newtheorem{deff}{Definition}[section]
\newtheorem{lemma}[deff]{Lemma}
\newtheorem{theorem}[deff]{Theorem}
\newtheorem{corollary}[deff]{Corollary}
\newtheorem{proposition}[deff]{Proposition}
\newtheorem{fact}[deff]{Fact}
\newtheorem{em-example}[deff]{Example}
\newtheorem{em-def}[deff]{Definition}        
\newtheorem{em-remark}[deff]{Remark}         
\newtheorem{em-question}[deff]{Question}
\newtheorem{claim}[deff]{Claim}
\newenvironment{example}{\begin{em-example} \em }{ \end{em-example}}
\newenvironment{remark}{\begin{em-remark} \em }{\end{em-remark}}
\newcommand{\R}{\mathbb R}
\newcommand{\N}{\mathbb N}
\newcommand{\Q}{\mathbb Q}
\newcommand{\Z}{\mathbb Z}
\newcommand{\T}{\mathbb{T}}
\newcommand{\f}{\phi}
\def\ent{\mathrm{ent}}
\def\End{\mathrm{End}}
\def\Aut{\mathrm{Aut}}
\def\pgp{\mathrm{Pol}}
\def\P{\mathbf{P}}
\def\QQ{\mathfrak Q}
\global\def\hull#1{\langle{#1}\rangle}
\title{The Pinsker subgroup of an algebraic flow}
\author{Dikran Dikranjan
\\{\footnotesize {\tt  dikran.dikranjan@dimi.uniud.it}} 
\\{\footnotesize Dipartimento di Matematica e Informatica,}
\\{\footnotesize Universit\`{a} di Udine,}
\\{\footnotesize Via delle Scienze, 206 - 33100 Udine, Italy} 
 \and Anna Giordano Bruno
\\{\footnotesize {\tt  anna.giordanobruno@math.unipd.it}} 
\\{\footnotesize Dipartimento di Matematica Pura e Applicata,}
\\{\footnotesize Universit\`a di Padova,}
\\{\footnotesize Via Trieste, 63 - 35121 Padova}
 }
\date{}%{02 giugno 2010}
\begin{document}

\maketitle

%\tableofcontents

\abstract{The algebraic entropy $h$, defined for endomorphisms $\phi$ of abelian groups $G$, measures the growth of the trajectories of non-empty finite subsets $F$ of $G$ with respect to $\phi$. We show that this growth can be either polynomial or exponential. The greatest $\phi$-invariant subgroup of $G$ where this growth is 
polynomial coincides with the greatest $\phi$-invariant subgroup $\P(G,\phi)$ of $G$ (named Pinsker subgroup of $\phi$) such that $h(\phi\restriction_{\P(G,\phi)})=0$. 
We obtain also an alternative characterization of $\P(G,\phi)$ from the point of view of the quasi-periodic points of $\phi$. 

This gives the following application in ergodic theory: for every  continuous injective endomorphism  $\psi$ of a compact abelian group $K$ there exists a largest  $\psi$-invariant closed subgroup $N$ of $K$ such that $\psi\restriction_{N}$ is ergodic; furthermore, the induced endomorphism $\overline{\psi}$ of the quotient $K/N$ has zero topological entropy.}

\section{Introduction}

The algebraic entropy $\ent$ of endomorphisms of abelian groups was first defined by Adler, Konheim and McAndrew in \cite{AKM}, then studied by Weiss \cite{W} and more recently in \cite{DGSZ}.  This function is studied also in other recent papers  \cite{AADGH,AGB,SZ1,SZ2,Z}. 

\medskip
The definition given by Weiss is appropriate for torsion abelian groups, since any endomorphism $\phi$ of an abelian group $G$ has  $\ent(\phi)=\ent(\phi\restriction_{t(G)})$ (so endomorphisms of torsion-free abelian groups have zero algebraic entropy). Peters \cite{Pet} modified the definition of algebraic entropy for automorphisms of arbitrary abelian groups, using in the definition the non-empty finite subsets instead of the finite subgroups used by Weiss. This definition was extended in \cite{DG} to endomorphisms of abelian groups, as follows. Let $G$ be an abelian group and $\phi\in\End(G)$. For a non-empty subset $F$ of $G$ and for any positive integer $n$, the \emph{$n$-th $\phi$-trajectory} of $F$ is 
$$
T_n(\phi,F)=F+\phi(F)+\ldots+\phi^{n-1}(F),
$$ 
and the \emph{$\phi$-trajectory} of $F$ is $T(\phi,F)=\sum_{n\in\N}\phi^n(F).$
For $F$ finite, let 
$$
\tau_{\phi,F}(n)=|T_n(\phi,F)|;
$$ 
when there is no possibility of confusion we write only $\tau_F(n)$, omitting the endomorphism $\phi$. For the function $\tau_F:\N_+\to \N$ the limit
\begin{equation}\label{lim-eq}
H(\phi,F)=\lim_{n\to\infty}\frac{\log\tau_{\phi,F}(n)}{n}
\end{equation}
exists (see \cite{DG}), and $H(\phi,F)$ is the \emph{algebraic entropy} of $\phi$ with respect to $F$. The \emph{algebraic entropy} of $\phi$ is 
$$
h(\phi)=\sup_{F\in[G]^{<\omega}}H(\phi,F).
$$
When $\phi$ is an automorphism, the algebraic entropy defined in this way has the same values as that defined (in a different way) by Peters.
Clearly, $h$ coincides with $\ent$ for endomorphisms of torsion abelian groups, but while the function $\ent$ trivializes for torsion-free abelian groups, $h$ remains non-trivial also in this case.

\smallskip
Many properties satisfied by $\ent$ are shared also by $h$ (see Fact \ref{conjugation_by_iso}). The main ones are the Addition Theorem  \ref{AT} 
and the Uniqueness Theorem \ref{UT} proved in \cite{DG}. The main tool used for the proof of those theorems is the ``Algebraic Yuzvinski Formula'' -- here Theorem \ref{Yuz} -- which will be used also in this paper (only in the proof of Proposition \ref{h=0->qp}).

\medskip
For a measure preserving  transformation $\phi$ of a measure space $(X, {\mathcal B}, \mu)$ the {\em Pinsker $\sigma$-algebra} $\mathfrak P(\phi)$ of $\phi$ is the greatest $\sigma$-subalgebra of ${\mathcal B}$ such that $\phi$ restricted to $(X, \mathfrak P(\phi),\mu\restriction_{{\mathcal B}})$ has zero measure entropy. Note that $id_X: (X, {\mathfrak B},\mu) \to (X, {\mathfrak P}(\phi),\mu\restriction_{{\mathcal B}})$ is measure preserving, so $(X, {\mathfrak P}(\phi),\mu\restriction_\mathcal B)$ is a factor of $(X, {\mathfrak B},\mu)$ (see \cite{Wa}).

\smallskip
The situation is similar in topological dynamics. Let $(X,\phi)$ be a \emph{topological flow}, i.e. a homeomorphism $\phi:X\to X$ of a compact Hausdorff space  $X$. A \emph{factor} $(\pi,(Y,\psi))$ of $(X,\phi)$ is a topological flow $(Y,\psi)$ together with a continuous surjective map $\pi:X\to Y$ such that $\pi\circ \phi=\psi\circ\pi$. In \cite{BL} (see also \cite{KerLi}) it is proved that a topological flow $(X,\phi)$ admits a largest factor with zero topological entropy, called \emph{topological Pinsker factor} (we recall the definition of topological entropy in Section \ref{top-sec}).  As mentioned in \cite{KerLi}, this can be deduced from the fact that the class of all homeomorphisms $\f:X\to X$ with zero topological entropy is closed under taking products and subspaces. 

\smallskip
In analogy with the topological case, we call an \emph{algebraic flow} a pair $(G,\phi)$, where $G$ is an abelian group and $\phi\in\End(G)$.
For an algebraic flow $(G,\phi)$, we say that a subgroup $H$ of $G$ is \emph{$\phi$-invariant} (or just \emph{invariant} when $\phi$ is clear from the context) if $\phi(H)\subseteq H$. A \emph{factor} of $(G,\phi)$ is an algebraic flow of the form $(G/H,\overline\phi)$, where $H$ is a $\phi$-invariant subgroup of $G$ and $\overline\phi$ is the endomorphism induced by $\phi$ on the quotient $G/H$.

 An algebraic flow may fail to have a largest factor with zero algebraic entropy $h$ even when the underlying group is torsion; indeed, zero algebraic entropy is \emph{not} preserved under taking products, as Example \ref{no:prod:stab} shows.
So the algebraic counterpart of the Pinsker factor in the case of an algebraic flow cannot be a {\em factor}. This 
motivated us to introduce the notion of Pinsker subgroup as follows.

\begin{deff}
Let $(G,\phi)$ be an algebraic flow. The \emph{Pinsker subgroup} of $G$ with respect to $\phi$ is the greatest $\f$-invariant subgroup $\P(G,\f)$ of $G$ such that $h(\f\restriction_{\P(G,\f)})=0$.
\end{deff}

As shown by Proposition \ref{P:existence} the Pinsker subgroup exists and has a number of nice properties. The aim of this paper is to study the properties of this subgroup and characterize it two distinct ways. 

The first one involves the very definition of entropy measuring the growth of the function $\tau_{\phi,F}(n)$.  Since
\begin{equation}\label{2}
|F|\leq\tau_{\phi,F}(n)\leq|F|^n\mbox{ for every }n\in\N_+,
\end{equation}
 the growth of $\tau_{\phi,F}$ is always at most exponential, so $H(\phi,F)\leq \log |F|$ in \eqref{lim-eq} (note (\ref{2}) implies also that $\tau_{\phi,F}(n)=1$ for every $n\in\N_+$ precisely when $F$ is a singleton). This justifies the following definition.

\begin{deff}
Let $(G,\phi)$ be an algebraic flow and let $F$ be a non-empty finite subset of $G$. Then we say that:
\begin{itemize}
\item[(a)] $\phi$ has \emph{exponential growth} with respect to $F$  (denoted by  $\phi\in\mathrm{Exp}_F$) if there exists $b\in\R$, $b>1$, such that $\tau_{\phi,F}(n)\geq b^n$ for every $n\in\N_+$.
\item[(b)] $\phi$ has \emph{polynomial growth} with respect to $F$ (denoted by $\f\in\pgp_F$)  if there exists $P_F(x)\in\Z[x]$ such that $\tau_{\f,F}(n)\leq P_F(n)$ for every $n\in\N_+$;
\item[(c)] $\phi$ has \emph{polynomial growth} (denoted by $\f\in\pgp$) if $\f\in\pgp_F$ for every non-empty finite subset $F$ of $G$.
\end{itemize}
\end{deff}

It is not hard to see that for a fixed non-empty finite subset $F$ of an abelian group $G$ an endomorphism $\phi:G\to G$ has exponential growth with respect to $F$ if and only if $H(\phi,F)>0$ (see Theorem \ref{exp}). On the other hand, if $H(\phi,F)=0$, then $\tau_{\phi,F}$ has growth less than exponential, yet it is not obvious that  $\f\in\pgp_F$ holds. To clarify this issue, in Section \ref{PGP-sec} we show that for any algebraic flow $(G,\phi)$ there exists a greatest $\f$-invariant subgroup  $\pgp(G,\f)$ of $G$ where the restriction of $\f$ has polynomial growth. Since $\phi\in\pgp_F$ immediately yields $H(\phi,F)=0$ (see Corollary \ref{Q<PGP<P}), this entails $\pgp(G,\f)\subseteq \P(G,\f)$. To prove that these two subgroups coincide, one needs the following remarkable

\bigskip 
\noindent {\bf Dichotomy Theorem.} {\em Every algebraic flow $(G,\phi)$ has either exponential or polynomial growth with respect to any fixed non-empty finite subset $F$ of $G$. }
\bigskip 
 
 In other words, $H(\phi,F)=0$ if and only if $\phi\in\pgp_F$ holds for any fixed non-empty  finite subset $F$ of $G$. 
By taking the universal quantifiers with respect to $F$, one can easily deduce from the Dichotomy Theorem that $h(\phi) = 0$ if and only if $\phi\in\pgp$ (see Corollary \ref{0<->pgp}), i.e., $ \P(G,\f) = \pgp(G,\f)$. This characterizes the Pinsker subgroup as the greatest $\phi$-invariant subgroup of $G$ on which the restriction of $\phi$ has polynomial growth.
The proof of this theorem requires a significant effort (see Theorem \ref{exp}). Indeed, we deduce the Dichotomy Theorem  from the equality $ \P(G,\f) = \pgp(G,\f)$, 
which is a corollary of our Main Theorem involving also  
 another important  subgroup related to the algebraic flow  $(G,\phi)$.

\smallskip
The second way to characterize  the Pinsker subgroup involves quasi-periodic points as follows. For an algebraic flow $(G,\f)$ 
 the \emph{$\phi$-torsion subgroup} of $G$ 
 $$
 t_\phi(G)=\{x\in G: |T(\phi, \hull{x})|<\infty\}
 $$
was introduced in \cite{DGSZ}. Note that $t_\phi(G)\subseteq t(G)$, and $t_\phi(G)$ consists of the quasi-periodic points of $\phi$ in $t(G)$.
The equality $ \P(G,\f) = t_\phi(G)$ for a torsion $G$ can be attributed to \cite{DGSZ} (see also Corollary \ref{t=P} for a proof). In other words, 
in this case the Pinsker subgroup coincides with the subgroup of all quasi-periodic points. 
%
%the $\phi$-torsion subgroup 
%In the case of endomorphisms $\phi$ of torsion abelian groups $G$, Corollary \ref{t=P} shows that the Pinsker subgroup coincides with the \emph{$\phi$-torsion subgroup} of $G$ introduced in \cite{DGSZ}, namely, $$t_\phi(G)=\{x\in G: |T(\phi, \hull{x})|<\infty\}.$$
%Note that the $\phi$-torsion subgroup can be defined for any abelian group $G$ and any endomorphism $\phi:G\to G$, but $t_\phi(G)\subseteq t(G)$, since $|T(\phi, \hull{x})|$ finite implies $\hull{x}$ finite. 
%When $G$ is torsion, the $\phi$-torsion subgroup $t_\phi(G)$ coincides also with the subgroup of $G$ of all quasi-periodic points of $\phi$ in $G$.

In Section \ref{QP-sec} we elaborate this key idea in the general case, by introducing the counterpart of $
 t_\phi(G)$ in the non-torsion case. Namely, the smallest $\f$-invariant subgroup $\mathfrak Q(G,\f)$ of $G$ such that the induced endomorphism $\overline\phi$ of   
$G/\mathfrak Q(G,\f)$ has no non-zero quasi-periodic points (so in particular, it contains all quasi-periodic points of $\phi$ in $G$). 
This subgroup  captures important features of the dynamics of the endomorphism $\f$  from this internal  point of view, making no recurse to the algebraic entropy of $\f$. 
In Section \ref{PGP-sec} we show that $\mathfrak Q(G,\f)\subseteq  \pgp(G,\f)$ (see Corollary \ref{Q<pgp}). Along with the above mentioned inclusion $\pgp(G,\f)\subseteq \P(G,\f)$ this 
gives a chain $\mathfrak Q(G,\f)\subseteq  \pgp(G,\f) \subseteq  \P(G,\f)$. It turns out that  all three subgroups coincide:

\bigskip 
\noindent {\bf Main Theorem.} 
\emph{For every algebraic flow $(G,\phi)$, then $\mathfrak Q(G,\f)=\pgp(G,\f)=\P(G,\f)$.}
\bigskip 

This alternative description of the Pinsker subgroup is proved in Theorem \ref{Q=PGP=P}.
 
\medskip
The interest in the algebraic entropy $h$ stems from the study of topological entropy of continuous endomorphisms of compact abelian groups (see Theorem \ref{Peters}), 
but the growth of the function $\tau_{\f,F}(n)$ has deep roots also in the non-abelian group theory (\cite{Wolf}, where the growth of $\tau_{id_G,F}$ may already 
be exponential for some non-abelian group $G$, see Remark \ref{WolfA}). 

As an application of our results we show that for every continuous injective endomorphism  $\psi$ of a compact abelian group $K$ there exists a largest  $\psi$-invariant closed subgroup $N$ of $K$ such that $\psi\restriction_{N}$ is ergodic; furthermore, the induced endomorphism of the quotient $K/N$ has 
 zero topological entropy.

%%%%%%%%%%%%%%%%%%%%%%%%%%%%%%%%%%%%%%%%%%%%%
%  ----------      Notation and terminology   ---------------------------------------------------------------------
%%%%%%%%%%%%%%%%%%%%%%%%%%%%%%%%%%%%%%%%%%%%%

\subsubsection*{Notation and terminology}

We denote by $\mathbb Z$, $\mathbb N$, $\mathbb N_+$, $\Q$ and $\R$ respectively the set of integers, the set of natural numbers, the set of positive integers, the set of rationals and the set of reals. For $m\in\mathbb N_+$, we use $\mathbb Z(m)$ for the finite cyclic group of order $m$. 

Let $G$ be an abelian group. With a slight divergence with the standard use, we denote by $[G]^{<\omega}$ the set of all non-empty finite subsets of $G$. If $H$ is a subgroup of $G$, we indicate this by $H\leq G$. The subgroup of torsion elements of $G$ is $t(G)$, while $D(G)$ denotes the divisible hull of $G$. For a cardinal $\alpha$ we denote by $G^{(\alpha)}$ the direct sum of $\alpha$ many copies of $G$, that is $\bigoplus_\alpha G$. 

Moreover, $\End(G)$ is the ring of all endomorphisms of $G$. We denote by $0_G$ and $id_G$ respectively the endomorphism of $G$ which is identically $0$ and the identity endomorphism of $G$.  For $F\in[G]^{<\omega}$ and $n\in\N_+$ let 
$$
F_{(n)}=\underbrace{F+\ldots+F}_n.
$$
The hyperkernel of $\phi\in\End(G)$ is $\ker_\infty\phi=\bigcup_{n\in\N}\ker\f^n$. The endomorphism $\overline\phi:G/\ker_\infty\phi\to G/\ker_\infty\phi$ induced by $\phi$ is injective.

%%%%%%%%%%%%%%%%%%%%%%%%%%%%%%%%%%%%%%%%%%%%%
%  ----------     Preliminary results  ---------------------------------------------------------------------
%%%%%%%%%%%%%%%%%%%%%%%%%%%%%%%%%%%%%%%%%%%%%

\section{Background on algebraic entropy}

We start this section giving the so-called Algebraic Yuzvinski Formula.

\begin{theorem}[Algebraic Yuzvinski Formula]\label{Yuz}
For $n\in \N_+$ an automorphism $\phi$ of $\Q^n$  is described by a matrix $A \in GL_n(\Q)$.  Then
\begin{equation}\label{yuz-eq}
h(\f)=\log s+ \sum_{|\lambda_i|>1}\log |\lambda_i|,
\end{equation}
where $\lambda_i$ are the eigenvalues of $A$ and $s$ is the least common multiple of the denominators of the coefficients of the (monic) characteristic polynomial of $A$.
\end{theorem}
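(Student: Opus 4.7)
The plan is to reduce the Algebraic Yuzvinski Formula to its classical topological counterpart via Pontryagin duality. First I would dualize: the Pontryagin dual of $\Q^n$ is the $n$-dimensional solenoid $\Sigma_n=\widehat{\Q^n}$, and the automorphism $\phi\in\Aut(\Q^n)$ corresponds to a dual topological automorphism $\widehat{\phi}$ of $\Sigma_n$ whose matrix (with respect to the natural generators coming from duality) is the transpose $A^{T}$, hence has the same eigenvalues and the same characteristic polynomial as $A$, in particular the same $s$.

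Next I would invoke the Bridge Theorem (Peters' extension to continuous endomorphisms of compact abelian groups), which asserts $h(\phi)=h_{\mathrm{top}}(\widehat{\phi})$. This reduces the claim to the classical Yuzvinski Theorem: for a topological automorphism $\widehat{\phi}$ of the solenoid $\Sigma_n$, the topological entropy is given by the right-hand side of \eqref{yuz-eq}. When $A\in GL_n(\Z)$ (so $s=1$), $\Sigma_n$ reduces to the torus $\T^n$ and one recovers the Bowen--Sinai formula, which is a classical volume-growth computation. The extra $\log s$ term appearing in the genuinely non-integral case reflects the contribution of the finite (non-archimedean) places of $\Q$ at the primes dividing $s$, where $A$ fails to be invertible over $\Z_p$; in Yuzvinski's original argument this is handled by decomposing $\Sigma_n$ into its archimedean and $p$-adic components and applying an addition formula for entropy across this product structure.

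An alternative direct approach on the algebraic side would proceed by decomposing $\Q^n$ into $\phi$-invariant subspaces according to the irreducible factors of the characteristic polynomial of $A$, using the Addition Theorem \ref{AT} to reduce to the case of an irreducible characteristic polynomial, and then estimating $\tau_{\phi,F}(n)$ directly for a finite generating set $F\subseteq\Q^n$. The eigenvalues $\lambda_i$ with $|\lambda_i|>1$ contribute to the exponential growth of the archimedean volume of $T_n(\phi,F)$, while the $\log s$ correction appears because the iterates $\phi^k(v)$ of a vector $v\in\Q^n$ pick up denominators growing like $s^k$, producing an independent multiplicative factor in $|T_n(\phi,F)|$.

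The main obstacle, in either route, is the $\log s$ term: matching the archimedean eigenvalue contribution with the $p$-adic contribution from the denominators with the correct multiplicative constant is exactly the delicate point of Yuzvinski's theorem and is not implied by the easier upper bound $h(\phi)\le\log s+\sum_{|\lambda_i|>1}\log|\lambda_i|$, which follows from a straightforward counting argument. For this reason I would favour the duality route, quoting the classical Yuzvinski Theorem as a black box; a self-contained algebraic proof essentially amounts to redoing Yuzvinski's product formula between the archimedean and non-archimedean completions of $\Q$, which is a substantial piece of work deserving its own treatment.
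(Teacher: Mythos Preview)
Your primary route is exactly the paper's proof: dualize $\phi$ to an automorphism $\widehat\phi$ of the solenoid $\widehat{\Q}^n$ with matrix $A^t$ (hence the same eigenvalues and characteristic polynomial), apply the Bridge Theorem $h(\phi)=h_{top}(\widehat\phi)$ (Theorem~\ref{Peters}), and quote the classical topological Yuzvinski Formula for $h_{top}(\widehat\phi)$. The paper does not pursue your alternative direct algebraic computation.
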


The proof of this theorem is discussed in Section \ref{top-sec}. 

We recall here some other definition.
For an algebraic flow $(G,\phi)$ and $F\in[G]^{<\omega}$,
since $T(\phi,F)$ need not be a subgroup of $G$, we denote by $V(\phi,F)=\hull{T(\phi,F)}=\hull{\f^n(F):n\in\N}$ the smallest $\phi$-invariant subgroup of $G$ containing $F$ (and consequently containing also $T(\phi,F)$). If $F=\{g\}$ for some $g\in G$ we write simply $V(\phi,g)$. Note that $V(\phi,F)=\sum_{g\in F}V(\f,g)$.

\smallskip
In the next claim we collect some technical properties of the trajectories that will be used in the paper.

\begin{claim}\label{Tf-Tf^k}
Let $(G,\phi)$ be an algebraic flow, and let $n,k\in\N_+$. Then:
\begin{itemize}
\item[(a)]$T_n(\f^k,F)\subseteq T_{nk-n+1}(\f,F)$ $($consequently, $\tau_{\f^k,F}(n)\leq\tau_{\f,F}(nk-n+1)$$)$;
\item[(b)]$T_{nk}(\f,F)=T_n(\f^k,T_k(\f,F))$ $($consequently, $\tau_{\f,F}(nk)=\tau_{\f^k,T_k(\f,F)}(n)$$)$.
\item[(c)]In particular, $T(\f,F)=T(\f^k,T_k(\f,F))$.
\end{itemize}
\end{claim}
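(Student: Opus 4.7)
The plan is to verify each item by direct manipulation of the Minkowski sum definition $T_m(\f,F)=\sum_{i=0}^{m-1}\f^i(F)$ and careful tracking of the exponents of $\f$ that appear.

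For (a), I would expand $T_n(\f^k,F)=\sum_{j=0}^{n-1}\f^{jk}(F)$ and note that its summands are indexed by the exponents $\{0,k,2k,\ldots,(n-1)k\}$, which (by a short arithmetic check) sits inside the exponent range $\{0,1,\ldots,nk-n\}$ of $T_{nk-n+1}(\f,F)$. The inclusion is then obtained by realising any element $\sum_{j=0}^{n-1}\f^{jk}(f_j)$ of the left-hand side as a Minkowski sum element in $T_{nk-n+1}(\f,F)$, with the slots indexed by exponents not of the form $jk$ filled in trivially; the cardinality bound $\tau_{\f^k,F}(n)\leq\tau_{\f,F}(nk-n+1)$ is then immediate. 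Alternatively, the cardinality inequality drops out directly from the translation principle $|A+C|\geq|A|$ for $C$ nonempty (pick any $c\in C$; then $A+c\subseteq A+C$ is equipotent with $A$), applied iteratively over the extra Minkowski summands.

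For (b), direct expansion handles both directions at once. Using that the endomorphism $\f^{jk}$ distributes over Minkowski sums,
$$T_n(\f^k,T_k(\f,F))=\sum_{j=0}^{n-1}\f^{jk}\bigl(T_k(\f,F)\bigr)=\sum_{j=0}^{n-1}\sum_{i=0}^{k-1}\f^{jk+i}(F),$$
and the map $(j,i)\mapsto jk+i$ is a bijection from $\{0,\ldots,n-1\}\times\{0,\ldots,k-1\}$ onto $\{0,1,\ldots,nk-1\}$, so the double sum collapses to $\sum_{m=0}^{nk-1}\f^m(F)=T_{nk}(\f,F)$; the cardinality identity follows formally. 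Part (c) is then an immediate passage to the limit: since $(T_n(\f,F))_n$ is an increasing sequence of subsets of $G$ with union $T(\f,F)$, the cofinal subsequence $(T_{nk}(\f,F))_n$ has the same union, so $T(\f,F)=\bigcup_n T_{nk}(\f,F)=\bigcup_n T_n(\f^k,T_k(\f,F))=T(\f^k,T_k(\f,F))$.

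The only thing to watch is the index arithmetic — in particular the bijection $(j,i)\mapsto jk+i$, which is the real engine of (b) and (c). I do not anticipate a genuine obstacle; the entire claim reduces to combinatorial bookkeeping on the exponents appearing in a Minkowski sum, together with the elementary fact that a group endomorphism distributes over Minkowski sums.
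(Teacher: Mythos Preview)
The paper states this claim without proof, so there is no argument to compare against; I will simply check yours.

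Your treatment of (b) is correct and is exactly the natural one: the bijection $(j,i)\mapsto jk+i$ from $\{0,\ldots,n-1\}\times\{0,\ldots,k-1\}$ onto $\{0,\ldots,nk-1\}$ carries the whole content, and since you are proving an \emph{equality} of Minkowski sums no hypothesis on $F$ is needed.

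Part (a), however, conceals two traps. First, your ``short arithmetic check'' that $\{0,k,2k,\ldots,(n-1)k\}\subseteq\{0,1,\ldots,nk-n\}$ actually fails: the largest exponent on the left is $(n-1)k=nk-k$, which exceeds $nk-n$ whenever $n>k$ (for $k=1$ the stated inclusion would read $T_n(\f,F)\subseteq F$, which is absurd). The bound $nk-n+1$ printed in the paper is a typo for $nk-k+1=(n-1)k+1$; with that correction your exponent argument goes through, and the single later use of (a) (in showing $\f\in\pgp_F\Rightarrow\f^k\in\pgp_F$) is unaffected by the change. Second, even with the corrected index, the set \emph{inclusion} genuinely needs $0\in F$ to ``fill in the missing slots'' (take $G=\Z$, $\f=id_G$, $F=\{1\}$: then $T_n(\f^k,F)=\{n\}$ while $T_{nk-k+1}(\f,F)=\{nk-k+1\}$). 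Your alternative route via $|A+C|\geq|A|$ does yield the cardinality inequality $\tau_{\f^k,F}(n)\leq\tau_{\f,F}(nk-k+1)$ unconditionally, and that is all that is ever used downstream.

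For (c), your claim that $(T_n(\f,F))_n$ is increasing with union $T(\f,F)$ again tacitly assumes $0\in F$. It is cleaner to argue directly as in (b): the same division-with-remainder bijection, now with $j$ ranging over all of $\N$, gives $\sum_{m\in\N}\f^m(F)=\sum_{j\in\N}\f^{jk}\bigl(T_k(\f,F)\bigr)$ immediately.
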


 In the next claim we see that the identical endomorphism of any abelian group has polynomial growth and we deduce in Example \ref{h(id)=0} 
that the algebraic entropy of the identical endomorphism is zero. 

\smallskip

\begin{claim}\label{claim0}
Let $G$ be an abelian group and $F\in[G]^{<\omega}$. Then $|F_{(n)}|\leq (n+1)^{|F|}$ for every $n\in\N_+$.
\end{claim}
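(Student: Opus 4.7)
The plan is to enumerate the elements of $F_{(n)}$ by multiplicity vectors. Write $F = \{a_1, \ldots, a_k\}$ with $k = |F|$, and recall that by definition
$$
F_{(n)} = \{x_1 + x_2 + \cdots + x_n : x_j \in F \text{ for each } j = 1, \ldots, n\}.
$$
Since $G$ is abelian, in any such sum we may group together equal summands. Therefore every element of $F_{(n)}$ can be written in the form $\sum_{i=1}^{k} c_i a_i$ for some tuple $(c_1, \ldots, c_k) \in \N^k$ satisfying $c_1 + \cdots + c_k = n$.

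First I would observe that the map $\Phi : (c_1,\ldots,c_k) \mapsto \sum_{i=1}^k c_i a_i$, defined on the set $S_n = \{(c_1,\ldots,c_k) \in \N^k : c_1 + \cdots + c_k = n\}$, is surjective onto $F_{(n)}$ by the observation above. Hence $|F_{(n)}| \leq |S_n|$. Next I would bound $|S_n|$ crudely: since each coordinate $c_i$ of a tuple in $S_n$ satisfies $0 \leq c_i \leq n$, it takes at most $n+1$ values, giving $|S_n| \leq (n+1)^k = (n+1)^{|F|}$, which is exactly the desired inequality.

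There is no real obstacle here; the only subtle point is to notice that we do not need the sharper count $\binom{n+k-1}{k-1}$ of compositions of $n$ into $k$ non-negative parts, because the looser bound $(n+1)^k$ already suffices and makes the polynomial dependence on $n$ (for fixed $|F|$) completely transparent, which is presumably how the claim will be used to deduce $h(id_G) = 0$ in the subsequent example.
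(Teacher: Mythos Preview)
Your proof is correct and follows essentially the same approach as the paper: enumerate $F=\{f_1,\ldots,f_t\}$, observe that every element of $F_{(n)}$ has the form $\sum_i m_i f_i$ with $m_i\in\N$ and $\sum_i m_i=n$, and then bound the number of such tuples by $(n+1)^t$ since each $m_i\in\{0,\ldots,n\}$. The paper's argument is identical in substance, only slightly more terse.
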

\begin{proof}
Let $F=\{f_1,\ldots,f_t\}$ and let $n\in\N_+$. If $x\in F_{(n)}$, then $x=\sum_{i=1}^t m_i f_i$, for some $m_i\in\N$ with $\sum_{i=1}^t m_i=n$. Then $0\leq m_i\leq n$ for every $i=1,\ldots,t$, that is, $(m_1,\ldots,m_t)\in\{0,1,\ldots,n\}^t$, and so $|F_{(n)}|\leq (n+1)^t$.
\end{proof}

\begin{remark}\label{WolfA}
It should be noted that if the group $G$ is non-abelian, then the function $n\mapsto |F_{(n)}|$ may have exponential growth \cite{Wolf}. 
\end{remark}

\begin{example}\label{h(id)=0}
For any abelian group $G$, the identical endomorphism $id_G$ has $h(id_G)=0.$
Indeed, let $F\in[G]^{<\omega}$. By Claim \ref{claim0}, $\tau_{id_G,F}(n)=|T_n(id_G,F)|=|F_{(n)}|\leq (n+1)^{|F|}$ for every $n\in\N_+$.
Hence 
$$
H(id_G,F)=\lim_{n\to\infty}\frac{\log\tau_{id_G,F}(n)}{n}\leq\lim_{n\to\infty}\frac{|F|\log(n+1)}{n}=0
.$$ 
Since $F$ was chosen arbitrarily, we can conclude that $h(id_G)=0$.
\end{example}

In the next fact we collect the basic properties of the algebraic entropy. The properties in (b), (c) and (d) were proved in \cite{Pet} in the case of automorphisms and then generalized to endomorphisms in \cite{DG}.

\begin{fact}\label{conjugation_by_iso}\emph{\cite{DG}}
Let $(G,\phi)$ be an algebraic flow.
\begin{itemize}
\item[(a)] If $H$ is a $\phi$-invariant subgroup of $G$ and $\overline\phi:G/H\to G/H$ is the endomorphism induced by $\phi$, then $h(\phi)\geq\max\{h(\phi\restriction_H),h(\overline\phi)\}$.
\item[(b)] Let $(H,\eta)$ be another algebraic flow. If $\f$ and $\eta$ are conjugated (i.e., there exists an isomorphism $\xi:G\to H$ such that $\phi=\xi^{-1}\eta\xi$), then $h(\phi)=h(\eta)$.
\item[(c)] For every $k\in\N$, then $h(\phi^k) = k\cdot h(\phi)$. If $\phi$ is an
automorphism, then $h(\phi^k) = |k|\cdot h(\phi)$ for every $k\in \Z$.
\item[(d)] If $G=G_1\times G_2$ and $\phi_i\in\End(G_i)$, for $i=1,2$, then $h(\phi_1\times\phi_2)=h(\phi_1)+h(\phi_2)$.
\item[(e)] If $G$ is a direct limit of $\phi$-invariant subgroups $\{G_i:i\in I\}$, then $h(\phi)=\sup_{i\in I}h(\phi\restriction_{G_i})$.
\end{itemize}
\end{fact}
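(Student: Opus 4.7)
All five items follow from direct manipulations of trajectories $T_n(\phi,F)$; only (c) and (d) require any ingenuity, and I would tackle them after the easy (a) and (b). For (a), the subgroup inequality is automatic since $\phi$-invariance of $H$ gives $T_n(\phi\restriction_H,F)=T_n(\phi,F)$ for every $F\in[H]^{<\omega}$, hence $h(\phi\restriction_H)\le h(\phi)$. For the quotient inequality, lift $\overline F\in[G/H]^{<\omega}$ to some $F\in[G]^{<\omega}$ with $|F|=|\overline F|$ via a set-theoretic section of the canonical projection $\pi\colon G\to G/H$; since $\pi\circ\phi=\overline\phi\circ\pi$ one has $\pi(T_n(\phi,F))=T_n(\overline\phi,\overline F)$, so $\tau_{\phi,F}(n)\ge\tau_{\overline\phi,\overline F}(n)$, and passing to the limit in \eqref{lim-eq} with $\sup$ over $\overline F$ yields $h(\phi)\ge h(\overline\phi)$. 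For (b), an easy induction shows $\xi(T_n(\phi,F))=T_n(\eta,\xi(F))$, so $\xi$ sets up a $\tau$-preserving bijection between $[G]^{<\omega}$ and $[H]^{<\omega}$ and the sup equality is immediate.

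For (c), the direct inclusion $T_n(\phi^k,F)\subseteq T_{nk}(\phi,F)$ gives $\tau_{\phi^k,F}(n)\le\tau_{\phi,F}(nk)$; dividing by $n$ and rewriting as $(k/(nk))\log\tau_{\phi,F}(nk)$ inside the limit yields $H(\phi^k,F)\le kH(\phi,F)$, hence $h(\phi^k)\le kh(\phi)$ upon taking $\sup_F$. Conversely, part (b) of Claim~\ref{Tf-Tf^k} gives $\tau_{\phi,F}(nk)=\tau_{\phi^k,T_k(\phi,F)}(n)$, and since $T_k(\phi,F)$ is a finite subset of $G$ this shows $kH(\phi,F)=\lim_n(\log\tau_{\phi,F}(nk))/n\le h(\phi^k)$; taking the supremum over $F$ yields $kh(\phi)\le h(\phi^k)$. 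For an automorphism $\phi$, the identity $T_n(\phi^{-1},F)=\phi^{-(n-1)}(T_n(\phi,F))$ shows $\tau_{\phi^{-1},F}(n)=\tau_{\phi,F}(n)$, whence $h(\phi^{-1})=h(\phi)$, and the general integer-$k$ formula follows.

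Item (d) is the substantive one. The lower bound uses product test sets: for $F_1\times F_2$ the trajectories factor as $T_n(\phi_1\times\phi_2,F_1\times F_2)=T_n(\phi_1,F_1)\times T_n(\phi_2,F_2)$, so $\tau$ is multiplicative and $H$ additive, giving $h(\phi_1\times\phi_2)\ge h(\phi_1)+h(\phi_2)$. The upper bound rests on the observation that an arbitrary $F\in[G_1\times G_2]^{<\omega}$ embeds into the product $\pi_1(F)\times\pi_2(F)$ of its coordinate projections, yielding $\tau_{\phi_1\times\phi_2,F}(n)\le\tau_{\phi_1,\pi_1(F)}(n)\cdot\tau_{\phi_2,\pi_2(F)}(n)$ and then the $\le$ inequality after taking the limit and $\sup_F$. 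Finally, (e) is immediate: any $F\in[G]^{<\omega}$ lies in some $G_i$ by directedness, so $H(\phi,F)=H(\phi\restriction_{G_i},F)\le\sup_j h(\phi\restriction_{G_j})$, while the reverse inequality is part (a) applied to each $G_i$. The only real obstacle is the $\le$ half of (d); without the embedding $F\subseteq\pi_1(F)\times\pi_2(F)$ there would be no product structure on the trajectory of an arbitrary finite set of $G_1\times G_2$, and separating the two factors would fail.
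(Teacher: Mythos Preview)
Your argument is correct in all five parts; the trajectory manipulations you outline are exactly the standard ones, and the only delicate points---the inclusion $T_n(\phi^k,F)\subseteq T_{nk}(\phi,F)$ in (c) and the embedding $F\subseteq\pi_1(F)\times\pi_2(F)$ in (d)---are handled properly (the former tacitly uses $0\in F$, which is harmless since adjoining $0$ to $F$ only enlarges trajectories and does not affect the supremum). Note, however, that the paper itself does not prove Fact~\ref{conjugation_by_iso}: it is stated as a fact with a citation to \cite{DG}, so there is no in-paper proof to compare against. Your write-up is precisely the kind of direct verification one expects for these basic properties, and it aligns with how the paper uses Claim~\ref{Tf-Tf^k} elsewhere (e.g.\ in Lemma~\ref{pgp-lemma}).
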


\begin{example}\label{beta}
For any abelian group $K$ the \emph{right Bernoulli shift} is $\beta_{K}:K^{(\N)}\to K^{(\N)}$ defined by 
$$
(x_0,x_1,x_2,\ldots)\mapsto(0,x_0,x_1,\ldots).
$$ 
It is proved in \cite{DGSZ} that $h(\beta_{\Z(p)})=\ent(\beta_{\Z(p)})=\log p$, where $p$ is a prime. Then it follows that $h(\beta_\Z)=\infty$ in view of Fact \ref{conjugation_by_iso}(a) (as noted also in \cite{DG}).
Then $h(\beta_K)=\log|K|$ (see \cite{DG,DGSZ}), with the usual convention that $\log|K|=\infty$ if $|K|$ is infinite.
\end{example}

The following are the main properties of $h$, namely, the so-called Addition Theorem and Uniqueness Theorem proved in \cite{DG}. They are inspired by their counterparts for $\ent$, proved in \cite{DGSZ}, which were inspired by the similar properties of the topological entropy.

\begin{theorem}[Addition Theorem]\label{AT}
Let $(G,\phi)$ be an algebraic flow, $H$ a $\phi$-invariant subgroup of $G$ and $\overline\phi:G/H\to G/H$ the endomorphism induced by $\phi$. Then $$h(\f)=h(\f\restriction_H) + h(\overline\f).$$
\end{theorem}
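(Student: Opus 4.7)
The inequality $h(\phi) \geq \max\{h(\phi\restriction_H), h(\overline\phi)\}$ is already in Fact \ref{conjugation_by_iso}(a), so the content of the theorem is promoting it to an additive identity, and I treat the two one-sided inequalities separately. For the lower bound $h(\phi) \geq h(\phi\restriction_H) + h(\overline\phi)$, my plan is: given $F' \in [H]^{<\omega}$ and $\overline{F}_0 \in [G/H]^{<\omega}$, lift $\overline{F}_0$ via a set-theoretic section of the projection $\pi:G\to G/H$ to some $F_0 \in [G]^{<\omega}$ with $\pi\restriction_{F_0}\colon F_0\to\overline{F}_0$ a bijection, and set $F = F_0 + F' \in [G]^{<\omega}$. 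Linearity of $\phi$ together with the $\phi$-invariance of $H$ gives
\[
T_n(\phi,F) \;=\; T_n(\phi,F_0) + T_n(\phi\restriction_H, F').
\]
Applying $\pi$ annihilates the second summand and maps the first onto $T_n(\overline\phi,\overline{F}_0)$; in particular, for each $\overline{x}\in T_n(\overline\phi,\overline{F}_0)$ and any chosen lift $y\in T_n(\phi,F_0)$ of $\overline{x}$, the fiber of $\pi\restriction_{T_n(\phi,F)}$ over $\overline{x}$ contains the full translate $y + T_n(\phi\restriction_H, F')$. Disjointness of the fibers yields $|T_n(\phi,F)| \geq |T_n(\overline\phi,\overline{F}_0)|\cdot|T_n(\phi\restriction_H,F')|$, and taking $\log(\cdot)/n$ as $n\to\infty$, then suprema over $F'$ and $\overline{F}_0$, produces the desired inequality.

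For the upper bound $h(\phi) \leq h(\phi\restriction_H) + h(\overline\phi)$, the plan combines structural reductions with the Algebraic Yuzvinski Formula (Theorem \ref{Yuz}). Using Fact \ref{conjugation_by_iso}(e), a direct-limit argument over the $\phi$-invariant subgroups $V(\phi,F)$ (with $F\in[G]^{<\omega}$) — compatibly with the pair $(H,G/H)$ — reduces matters to a tractable case. One then splits $G$ along its characteristic (hence $\phi$-invariant) torsion subgroup $t(G)$: on $t(G)$ the algebraic entropy agrees with $\ent$, and additivity follows from Weiss's classical theorem for $\ent$. On the torsion-free part, tensoring with $\Q$ and passing to the quotient by $\ker_\infty\phi$ to secure injectivity realizes $\phi$ as some $A\in GL_n(\Q)$, and Theorem \ref{Yuz} expresses $h(\phi) = \log M(\tilde p)$, where $\tilde p\in\Z[x]$ is the primitive integer form of the characteristic polynomial of $A$ and $M$ denotes the Mahler measure. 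The invariant short exact sequence $0 \to H\otimes\Q \to G\otimes\Q \to (G/H)\otimes\Q \to 0$ factors $\chi_A = \chi_{A\restriction_H}\cdot\chi_{\overline A}$, and multiplicativity of the Mahler measure under polynomial products converts this factorization into the additivity of $h$. Reassembling the torsion and torsion-free contributions delivers the upper bound.

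The hard part will be the torsion-free divisible step: on a $\Q$-vector space the growth of $\tau_{\phi,F}(n)$ is genuinely spectral, driven by the eigenvalues of $\phi$, and cannot be controlled by elementary combinatorics alone. The Algebraic Yuzvinski Formula is indispensable precisely because it converts the algebraic factorization of the characteristic polynomial across the short exact sequence into the multiplicativity of the Mahler measure, and hence into the additivity of $h$. The supporting reductions via direct limits and the torsion/torsion-free splitting are technical but follow standard structure theory of abelian groups.
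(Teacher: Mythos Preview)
The paper does not prove the Addition Theorem; it is merely stated here and attributed to \cite{DG}, where the full argument is carried out. So there is no ``paper's own proof'' to compare your proposal against.

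That said, your overall strategy matches the standard route one expects (and which \cite{DG} follows in outline): the lower bound via a fiber-counting argument is correct as you have written it, and the upper bound via reduction to the torsion and torsion-free cases, with the latter handled through the Algebraic Yuzvinski Formula and multiplicativity of the Mahler measure, is the right idea.

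The one place where your sketch is genuinely underspecified is the sentence ``Reassembling the torsion and torsion-free contributions delivers the upper bound.'' Knowing that the Addition Theorem holds for all torsion groups, for all torsion-free groups, and for the particular pair $(t(G),G/t(G))$ does \emph{not} immediately yield it for an arbitrary $\phi$-invariant $H\le G$, because $H$ need not interact cleanly with $t(G)$: for instance $t(G/H)$ may strictly contain the image of $t(G)$, and the quotient $G/H$ is neither torsion nor torsion-free in general. One needs an explicit d\'evissage (a ``two-out-of-three'' argument along a chain such as $t(H)\subseteq H\subseteq H+t(G)\subseteq G$, or equivalently an induction through the filtration $0\le t(H)\le t(G)\le \pi^{-1}(t(G/H))\le G$) to propagate additivity from the special cases to the general one. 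This step is where much of the technical work in \cite{DG} actually lies, and your proposal should acknowledge it rather than absorb it into a single clause.

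A smaller point: in the torsion-free divisible step you should also check that passing to $D(G)$ and to the quotient by $\ker_\infty\phi$ is compatible with the given pair $(H,G/H)$ --- i.e., that these reductions respect the short exact sequence --- since the Yuzvinski/Mahler argument is applied to the induced sequence $0\to H\otimes\Q\to G\otimes\Q\to (G/H)\otimes\Q\to 0$ and you need the entropies to be preserved along both reductions simultaneously.
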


\begin{theorem}[Uniqueness Theorem]\label{UT}
The algebraic entropy $h$ of the endomorphisms of the abelian groups is characterized as the unique collection $h=\{h_G:G\ \text{abelian group}\}$ of functions $h_G:\End(G)\to\R_+$ such that:
\begin{itemize}
\item[(a)] $h_G$ is invariant under conjugation for every abelian group $G$;
\item[(b)] if $\phi\in\End(G)$ and the group $G$ is a direct limit of $\phi$-invariant subgroups $\{G_i:i\in I\}$, then $h_G(\phi)=\sup_{i\in I}h_{G_i}(\phi\restriction_{G_i})$;
\item[(c)] the Addition Theorem holds for $h$;
\item[(d)] $h_{K^{(\N)}}(\beta_K)=\log|K|$ for any finite abelian group $K$;
\item[(e)] the Algebraic Yuzvinski Formula holds for $h_\Q$ restricted to the automorphisms of $\Q$.
\end{itemize}
\end{theorem}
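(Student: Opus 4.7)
Let $h'=\{h'_G\}$ be another collection of functions satisfying (a)--(e). Since $h$ itself satisfies (a)--(e) (by Fact \ref{conjugation_by_iso}, Theorem \ref{AT}, Example \ref{beta} and Theorem \ref{Yuz}), the plan is to show by a cascade of reductions that (a)--(e) actually pin down a unique value $h'_G(\phi)=h_G(\phi)$ on every algebraic flow $(G,\phi)$.

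First, I would invoke (b) to reduce to the ``finitely $\phi$-generated'' case $G=V(\phi,F)$ for some $F\in [G]^{<\omega}$, noting that $G=\varinjlim_F V(\phi,F)$ is a direct limit of $\phi$-invariant subgroups, so both $h'_G(\phi)$ and $h_G(\phi)$ are the supremum of the corresponding entropies of the restrictions. Next, I would apply (c) to the $\phi$-invariant subgroup $t(G)$ to split the computation into a torsion part $\phi\restriction_{t(G)}$ and a torsion-free part $\overline\phi:G/t(G)\to G/t(G)$, treated independently. For the \emph{torsion case}, standard decomposition into $p$-primary components (which are $\phi$-invariant), combined with (b) applied to the direct limit of finite $\phi$-invariant subgroups, reduces the problem to finite abelian $p$-groups; iterating (c) and using invariance under conjugation (a) then reduces everything to the Bernoulli shifts $\beta_{\Z(p^k)}$, at which point (d) (together with (b) and (c) applied to the filtration $\Z(p)\hookrightarrow\Z(p^2)\hookrightarrow\ldots$ inside $\Z(p^k)^{(\N)}$) determines the value uniquely. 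This is essentially the Uniqueness Theorem for $\ent$ from \cite{DGSZ}.

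For the \emph{torsion-free case}, the idea is to pass to the divisible hull $D(G)=G\otimes\Q$, to which $\phi$ extends canonically as a $\Q$-linear endomorphism $\phi_\Q$. One needs first to handle $\ker_\infty\phi$ and reduce to the injective case by (c), and then invoke (b) to write $D(G)$ as a direct limit of finite-dimensional $\phi_\Q$-invariant subspaces, reducing to the case $G\cong\Q^n$. On $\Q^n$, using the rational canonical form of $\phi$ and repeated applications of (c) to flags of $\phi$-invariant $\Q$-subspaces (whose successive quotients are cyclic $\Q[x]$-modules $\Q[x]/(p(x))$), one reduces to computing $h'$ on flows $(\Q[x]/(p(x)),\cdot x)$ for irreducible $p(x)\in\Q[x]$. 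The main obstacle of the whole proof is precisely this step: axiom (e) only furnishes Yuzvinski's formula for $n=1$, so one must bootstrap from $\Q$ to arbitrary number-field extensions $\Q[\lambda]\cong\Q[x]/(p(x))$, compatibly with the Yuzvinski expression $\log s+\sum_{|\lambda_i|>1}\log|\lambda_i|$. This is engineered by combining (a) (to conjugate into a convenient matrix form), (c) (to peel off eigenvalue contributions one by one across $\Q$-algebraic extensions realised inside appropriate vector spaces), and (b) (to pass between $\Q$ and its algebraic closure via nested invariant subspaces), until every value of $h'$ on $(\Q^n,\phi)$ is forced to agree with the right-hand side of \eqref{yuz-eq} and hence with $h(\phi)$. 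Combining the torsion and torsion-free conclusions via (c) closes the argument.
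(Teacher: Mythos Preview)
The paper does not contain a proof of Theorem~\ref{UT}; it is stated with attribution to \cite{DG}. So there is no in-paper argument to compare your proposal against directly. Your reduction scheme (direct limits $\Rightarrow$ finitely $\phi$-generated; Addition Theorem $\Rightarrow$ torsion/torsion-free split; torsion case via the uniqueness argument of \cite{DGSZ}; torsion-free case via $\Q^n$) is the natural skeleton and is presumably close to what \cite{DG} does.

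That said, two steps in your plan are underspecified or problematic. First, the passage to the divisible hull: you replace $(G,\phi)$ by $(D(G),\widetilde\phi)$, but $h'(\phi)=h'(\widetilde\phi)$ is not one of the axioms, and Lemma~\ref{AA} proves it only for $h$, not for an arbitrary $h'$. You must derive it from (a)--(e), e.g.\ by applying (c) to $G\hookrightarrow D(G)$ together with a separate argument that the induced endomorphism on the torsion quotient $D(G)/G$ has $h'$-entropy zero --- which itself presupposes the torsion case.

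Second, and more seriously, your bootstrap from axiom (e) (Yuzvinski on $\Q$, i.e.\ $1\times 1$ rational matrices) to the full formula on $\Q^n$ is not justified. For an irreducible $p(x)\in\Q[x]$ of degree $n>1$, the module $\Q[x]/(p(x))$ is simple over $\Q[\phi]$: it has no nontrivial proper $\phi$-invariant $\Q$-subspace, so (c) cannot ``peel off eigenvalue contributions'' over $\Q$. Your appeal to ``$\Q$-algebraic extensions realised inside appropriate vector spaces'' and to passing to the algebraic closure steps outside the axiomatic framework, which is about abelian groups and their endomorphisms; axioms (a)--(e) give no purchase on $\overline\Q$-linear decompositions. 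This is precisely the hard point you flagged, and the sketch does not supply a mechanism that stays within (a)--(e). Either one needs (e) in its full $\Q^n$ form, or a genuinely different idea to pin down $h'$ on simple $\Q[x]$-modules from the one-dimensional case alone; your outline does not provide one.
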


The following two useful properties of the algebraic entropy are needed in the proof of our main results, namely, Proposition \ref{h=0->qp} and Theorem \ref{exp}.

\begin{lemma}\label{AA}
Let $(G,\phi)$ be an algebraic flow. If $G$ is torsion-free and $\widetilde\f:D(G)\to D(G)$ denotes the unique extension of $\f$ to the divisible hull $D(G)$ of $G$, then $h(\widetilde\f)=h(\f)$.
\end{lemma}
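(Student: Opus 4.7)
The plan is to prove the two inequalities $h(\widetilde\phi) \geq h(\phi)$ and $h(\widetilde\phi) \leq h(\phi)$ separately. The first one is essentially free: since $\widetilde\phi$ extends $\phi$, the subgroup $G \leq D(G)$ is $\widetilde\phi$-invariant with $\widetilde\phi\restriction_G = \phi$, so Fact \ref{conjugation_by_iso}(a) gives $h(\widetilde\phi) \geq h(\widetilde\phi\restriction_G) = h(\phi)$.

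For the reverse inequality, I would take an arbitrary $F \in [D(G)]^{<\omega}$ and show that $H(\widetilde\phi, F) \leq h(\phi)$ by reducing to a finite subset of $G$. Since $D(G)$ is the divisible hull of the torsion-free group $G$, every element of $D(G)$ has some positive integer multiple lying in $G$; taking a common multiple (for instance the product of the denominators), I can find $m \in \N_+$ with $mF \subseteq G$.

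The key computation is that rescaling by $m$ does not change the cardinality of the trajectories. Because $\widetilde\phi$ is a group endomorphism,
\[
T_n(\widetilde\phi, mF) = mF + \widetilde\phi(mF) + \cdots + \widetilde\phi^{n-1}(mF) = m \cdot T_n(\widetilde\phi, F).
\]
Since $G$ is torsion-free, so is $D(G)$ (it is a $\Q$-vector space), hence multiplication by $m$ is injective on $D(G)$. Therefore $|T_n(\widetilde\phi, F)| = |T_n(\widetilde\phi, mF)|$; and as $mF \subseteq G$ and $\widetilde\phi$ extends $\phi$, this equals $|T_n(\phi, mF)|$. Passing to the $\log/n$ limit yields $H(\widetilde\phi, F) = H(\phi, mF) \leq h(\phi)$, and taking the supremum over $F$ gives $h(\widetilde\phi) \leq h(\phi)$.

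I do not anticipate a serious obstacle here; the only subtle point is recognizing that the trajectory-cardinality is preserved under multiplication by $m$, which is what makes the torsion-freeness hypothesis essential. No appeal to the Yuzvinski Formula or any of the heavier machinery is needed.
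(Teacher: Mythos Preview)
Your proof is correct and follows essentially the same approach as the paper: both obtain $h(\widetilde\phi)\geq h(\phi)$ from Fact~\ref{conjugation_by_iso}(a), and for the reverse inequality both clear denominators to replace $F$ by $mF\subseteq G$ and use that multiplication by $m$ is a bijection on $D(G)$ commuting with $\widetilde\phi$, so that $\tau_{\widetilde\phi,F}(n)=\tau_{\phi,mF}(n)$. The paper phrases the key step via the automorphism $\mu_m(x)=mx$ of $D(G)$, but the content is identical to your direct computation.
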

\begin{proof}
It is obvious that $h(\widetilde\f)\geq h(\f)$  by Fact \ref{conjugation_by_iso}(a). 

Let $F\in[D(G)]^{<\omega}$. Then there exists $m\in\N_+$ such that $mF\subseteq G$. Let $\mu_m(x) = mx$ for every $x\in D(G)$. Then $\mu_m$ is an automorphism of $D(G)$ that commutes with $\widetilde \f$. Moreover, $T_n(\f, mF)=T_n(\f,\mu_m(F))=\mu_m(T_n(\widetilde \f,F))$. In particular, $\tau_{\f, mF}(n) = \tau_{\widetilde \f,F}(n)$. Hence, $H(\f,mF)=H(\widetilde \f,F)$. Since $H(\f,mF) \leq h(\f)$, we conclude that $H(\widetilde \f,F)\leq h(\f)$. By the arbitrariness of $F$, this gives $h(\widetilde\f)\leq h(\f)$.  
\end{proof}

\begin{lemma}\label{proexp}
Let $(G,\phi)$ be an algebraic flow and $F\in[G]^{<\omega}$. If $H(\phi,F)=0$, then $h(\f\restriction_{V(\phi,F)})=0$.
\end{lemma}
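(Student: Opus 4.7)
The plan is to show that $H(\phi,F')=0$ for every $F'\in[V(\phi,F)]^{<\omega}$; by definition of $h$ this immediately gives $h(\phi\restriction_{V(\phi,F)})=0$. Fix such an $F'$. Since $V(\phi,F)=\hull{\phi^n(F):n\in\N}$, each element of $F'$ is a $\Z$-linear combination of finitely many generators $\phi^i(f)$ with $i<N$, $f\in F$, for some $N\in\N_+$. Splitting each combination into its positive and negative parts produces an $r\in\N_+$ with $F'\subseteq A_{(r)}-A_{(r)}$, where $A:=T_N(\phi,F)$.

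The core of the argument is a two-step comparison of trajectory sizes. Commutativity of $G$ yields the set identities $T_n(\phi,B-C)=T_n(\phi,B)-T_n(\phi,C)$ and $T_n(\phi,B_{(r)})=T_n(\phi,B)_{(r)}$, which combined with the inclusion above give
$$
T_n(\phi,F')\subseteq T_n(\phi,A)_{(r)}-T_n(\phi,A)_{(r)},\qquad \tau_{\phi,F'}(n)\leq \tau_{\phi,A}(n)^{2r}.
$$
Next, expanding $T_n(\phi,A)=\sum_{i=0}^{n-1}\sum_{j=0}^{N-1}\phi^{i+j}(F)$ and collecting terms by the power $k=i+j$, each $\phi^k(F)$ appears with multiplicity at most $N$, so
$$
T_n(\phi,A)\subseteq T_{n+N-1}(\phi,F)_{(N)},\qquad \tau_{\phi,A}(n)\leq \tau_{\phi,F}(n+N-1)^N.
$$

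Combining both bounds and passing to $\tfrac{1}{n}\log$ as $n\to\infty$ yields $H(\phi,F')\leq 2rN\cdot H(\phi,F)=0$, as required. The main technical subtlety lies in the very first step: re-writing a $\Z$-linear combination of the generators $\phi^i(f)$ as a difference of two elements of $A_{(r)}$ requires a harmless enlargement of $F$ (e.g.\ to $F\cup\{0\}$, which does not change $V(\phi,F)$) so that the ``padding with zeros'' used to absorb positive and negative coefficients really does land inside $A_{(r)}$; once this bookkeeping is in place, all remaining inequalities are routine Minkowski-sum estimates that rely only on the abelian structure of $G$.
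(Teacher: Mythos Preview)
Your proof is correct and follows essentially the same strategy as the paper's: both arguments cover an arbitrary $F'\subseteq V(\phi,F)$ by a set of the form ``iterated Minkowski sum of a partial trajectory of $F$'' and then bound $\tau_{\phi,F'}(n)$ by a fixed power of $\tau_{\phi,F}(n+\text{const})$, using the identities $T_n(\phi,B_{(r)})=T_n(\phi,B)_{(r)}$ and $T_n(\phi,T_N(\phi,F))\subseteq T_{n+N-1}(\phi,F)_{(N)}$. The paper packages this as $F'\subseteq T_m(\phi,F_{(m)})$ together with $T_n(\phi,T_m(\phi,F_{(m)}))\subseteq T_{n+m}(\phi,F_{(m^2)})$, which is your two estimates combined into one; your decomposition into $A_{(r)}-A_{(r)}$ with $A=T_N(\phi,F)$ is just a different bookkeeping of the same inequalities. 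One point where your write-up is actually more explicit than the paper's: the paper asserts $V(\phi,F)=\bigcup_m T_m(\phi,F_{(m)})$, which tacitly requires $F$ to be symmetric (or at least to contain $0$ and $-F$) so that negative $\Z$-coefficients are absorbed; you handle this directly via the difference $A_{(r)}-A_{(r)}$ and the padding-with-$0$ remark. Note that your ``harmless enlargement'' $F\mapsto F\cup\{0\}$ also needs $H(\phi,F\cup\{0\})=0$, not just $V(\phi,F\cup\{0\})=V(\phi,F)$; this is immediate since $T_n(\phi,F\cup\{0\})\subseteq T_n(\phi,F)-T_n(\phi,F)+T_n(\phi,F)$ (fix $a\in F$ and replace each ``missing'' summand by $\phi^i(a)-\phi^i(a)$), but it is worth saying.
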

\begin{proof}
From $H(\f,F)=0$, it follows that $H(\f,F_{(m)})=0$ for every $m\in\N_+$; indeed, fixed $m\in\N_+$, for every $n\in\N_+$, we have $T_n(\f,F_{(m)})=T_n(\f,F)_{(m)}$, and so $\tau_{\f,F_{(m)}}(n)\leq\tau_{\f,F}(n)^m$. 
%Now Lemma \ref{pgp->h=0} applies.

Note that
\begin{equation}\label{*}
V(\phi,F)=\bigcup_{m\in\N_+}T_m(\phi,F_{(m)}).
\end{equation}
Moreover, for $m\in\N_+$,
\begin{equation}\label{**}
H(\phi,F_{(m^2)})=0\ \ \text{implies}\ \ H(\phi,T_m(\phi,F_{(m)}))=0,
\end{equation}
since $T_n(\phi,T_m(\phi,F_{(m)}))\subseteq T_{n+m}(\phi,F_{(m^2)})$ for every $n\in\N_+$.
Now, for every $F'\in[V(\phi,F)]^{<\omega}$, by \eqref{*} there exists $m\in\N_+$ such that $F'\subseteq T_m(\phi,F_{(m)})$. By \eqref{**} $H(\phi,T_m(\phi,F_{(m)}))=0$ and so also $H(\phi,F')=0$. By the arbitrariness of $F'$, this proves that $h(\phi\restriction_{V(\phi,F)})=0$.
\end{proof}

%%%%%%%%%%%%%%%%%%%%%%%%%%%%%%%%%%%%%%%%%%%%%
%  ----------      The Pinsker subgroup  ---------------------------------------------------------------------
%%%%%%%%%%%%%%%%%%%%%%%%%%%%%%%%%%%%%%%%%%%%%

\section{The Pinsker subgroup}

The next proposition proves the existence of the Pinsker subgroup for any algebraic flow.

\begin{proposition}\label{P:existence}
Let $(G,\phi)$ be an algebraic flow. The Pinsker subgroup $\P(G,\f)$ of $G$ exists.
\end{proposition}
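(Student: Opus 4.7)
The plan is to exhibit $\P(G,\phi)$ as the sum of the family
\[
\mathcal{F} \;=\; \{H\leq G : H\text{ is $\phi$-invariant and } h(\phi\restriction_H)=0\},
\]
which is nonempty since $\{0\}\in\mathcal{F}$. Setting $\P(G,\phi):=\sum_{H\in\mathcal{F}} H$, the subgroup is automatically $\phi$-invariant (sums of $\phi$-invariant subgroups are $\phi$-invariant) and, by construction, contains every member of $\mathcal{F}$. The content of the proof is to verify that $\P(G,\phi)$ itself belongs to $\mathcal{F}$, i.e.\ that $h(\phi\restriction_{\P(G,\phi)})=0$.

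First I would show that $\mathcal{F}$ is closed under finite sums. Given $H_1,H_2\in\mathcal{F}$, the subgroup $H_1+H_2$ is $\phi$-invariant, and $H_1$ is a $\phi$-invariant subgroup of $H_1+H_2$ with quotient $(H_1+H_2)/H_1$. The standard isomorphism $(H_1+H_2)/H_1\cong H_2/(H_1\cap H_2)$ is $\phi$-equivariant, because $\phi$ preserves $H_1$, $H_2$ and hence $H_1\cap H_2$. Therefore the induced endomorphism $\overline{\phi}$ on $(H_1+H_2)/H_1$ is conjugate to the endomorphism induced by $\phi\restriction_{H_2}$ on $H_2/(H_1\cap H_2)$. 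By Fact~\ref{conjugation_by_iso}(a),(b) the latter has entropy at most $h(\phi\restriction_{H_2})=0$, so $h(\overline{\phi})=0$. Now the Addition Theorem~\ref{AT} applied to $H_1\leq H_1+H_2$ yields
\[
h(\phi\restriction_{H_1+H_2}) \;=\; h(\phi\restriction_{H_1}) + h(\overline{\phi}) \;=\; 0,
\]
so $H_1+H_2\in\mathcal{F}$. Iterating, every finite sum $H_1+\cdots+H_n$ of elements of $\mathcal{F}$ lies in $\mathcal{F}$.

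Consequently $\P(G,\phi)$ is the directed union of the $\phi$-invariant subgroups of the form $H_1+\cdots+H_n$ with $H_i\in\mathcal{F}$, each of which has zero algebraic entropy. By Fact~\ref{conjugation_by_iso}(e) (continuity along direct limits),
\[
h(\phi\restriction_{\P(G,\phi)}) \;=\; \sup\bigl\{h(\phi\restriction_{H_1+\cdots+H_n}) : H_i\in\mathcal{F},\ n\in\N_+\bigr\} \;=\; 0,
\]
so $\P(G,\phi)\in\mathcal{F}$, and by construction it is the largest such subgroup, proving both existence and uniqueness.

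The only delicate step is the closure of $\mathcal{F}$ under finite sums: one must combine the Addition Theorem with the isomorphism $(H_1+H_2)/H_1\cong H_2/(H_1\cap H_2)$ in a $\phi$-equivariant manner, because $h$ is not known a priori to behave well under taking sums of subgroups; monotonicity alone would only bound $h(\phi\restriction_{H_1+H_2})$ from below by $\max\{h(\phi\restriction_{H_1}),h(\phi\restriction_{H_2})\}$, which is the wrong direction. Everything else is a routine directed-limit argument.
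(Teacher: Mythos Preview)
Your proof is correct and follows the same overall strategy as the paper: define $\mathcal{F}$, show it is closed under finite sums, and realise $\P(G,\phi)$ as the directed union of these finite sums. The one notable difference is in the closure-under-sums step. You invoke the Addition Theorem~\ref{AT} together with the second isomorphism theorem, whereas the paper argues more elementarily: it observes that $H_1+H_2$ is a quotient of the direct product $H_1\times H_2$, so $h(\phi\restriction_{H_1+H_2})\leq h(\phi\restriction_{H_1}\times\phi\restriction_{H_2}) = h(\phi\restriction_{H_1})+h(\phi\restriction_{H_2})=0$ using only Fact~\ref{conjugation_by_iso}(a),(b),(d). This avoids appealing to the Addition Theorem, which is a substantially deeper result (its proof in \cite{DG} relies on the Algebraic Yuzvinski Formula). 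Your argument is perfectly valid, but the paper's route keeps the existence of the Pinsker subgroup independent of that heavier machinery. For the passage to the full sum, the paper argues directly on finite subsets $F$ rather than citing Fact~\ref{conjugation_by_iso}(e), but this is only a cosmetic difference.
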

\begin{proof}
Let $\mathcal F=\{H\leq G: H\ \text{$\phi$-invariant},\ h(\phi\restriction_H)=0\}$.

We start proving that
\begin{equation}\label{sum_in_F}
\text{if $H_1,\ldots,H_n\in\mathcal F$, then $H_1+\ldots+H_n\in\mathcal F$.}
\end{equation}
Let $H_1,H_2\in\mathcal F$. Consider $\xi=\phi\restriction_{H_1}\times\phi\restriction_{H_2}:H_1\times H_2\to H_1\times H_2$. By Fact \ref{conjugation_by_iso}(d) $h(\xi)=0$. Since $H_1+H_2$ is a quotient of $H_1\times H_2$, the quotient endomorphism $\overline{\xi}:H_1\to H_2$ induced by $\xi$ has $h(\overline{\xi})=0$ by Fact \ref{conjugation_by_iso}(a).
Since $\phi\restriction_{H_1+H_2}$ is conjugated to $\overline{\xi}$, it follows that $h(\phi\restriction_{H_1+H_2})=0$ by Fact \ref{conjugation_by_iso}(b). Proceeding by induction it is clear how to prove \eqref{sum_in_F}.

\smallskip
Let $P=\hull{H:H\in\mathcal F}$, and let $F\in[P]^{<\omega}$.
Then there exist $n\in\N_+$ and $H_1,\ldots,H_n\in\mathcal F$ such that $F\subseteq H_1+\ldots+H_n$. By \eqref{sum_in_F} $h(\f\restriction_{H_1+\ldots+H_n})=0$, so in particular $H(\f\restriction_P,F)=H(\f\restriction_{H_1+\ldots+H_n},F)=0$. Since $F$ is arbitrary, this proves that $h(\f\restriction_P)=0$. By the definition of $P$, if $H$ is a $\f$-invariant subgroup of $G$ with $h(\f\restriction_H)=0$, then $H\subseteq P$. Hence $P=\P(G,\f)$.
\end{proof}

It is clear that for an algebraic flow $(G,\phi)$, $h(\phi)=0$ if and only if $G=\P(G,\phi)$. In the opposite direction we consider the following property. 

\begin{deff}
Let $(G,\phi)$ be an algebraic flow. We say the $\phi$ has \emph{completely positive algebraic entropy} if $h(\phi\restriction_H)>0$ for every $\phi$-invariant subgroup $H$ of $G$. We denote this by $h(\phi)>\!\!>0$.
\end{deff}

Clearly, $h(\phi)>\!\!> 0$ if and only if $\P(G,\phi)=0$.

\smallskip
The definition is motivated by its topological counterpart: 
%  factors corresponding to this algebraic property was called completely positive (topological) entropy in: 
a topological flow $(X,\psi)$ has \emph{completely positive topological entropy} if all its non-trivial factors have positive topological entropy  \cite{BL}.

\begin{lemma}\label{pi<p}
Let $(G,\phi)$ be an algebraic flow and let $H$ be a $\phi$-invariant subgroup of $G$. Then:
\begin{itemize}
\item[(a)]$\P(H,\phi\restriction_H)=\P(G,\phi)\cap H$;
\item[(b)]for $\pi:G\to G/H$ the canonical projection and $\overline\f:G/H\to G/H$ the endomorphism induced by $\phi$, $\pi(\P(G,\f))\subseteq \P(G/H,\overline\f)$.
\end{itemize}
\end{lemma}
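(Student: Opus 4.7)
For part (a), my plan is to establish the two inclusions separately, using the extremal definition of the Pinsker subgroup and the monotonicity of $h$ under restriction. Note first that $\P(G,\phi)\cap H$ is $\phi$-invariant (as the intersection of two $\phi$-invariant subgroups), hence $\phi\restriction_H$-invariant, and Fact \ref{conjugation_by_iso}(a) applied to the inclusion $\P(G,\phi)\cap H \hookrightarrow \P(G,\phi)$ gives $h(\phi\restriction_{\P(G,\phi)\cap H}) \leq h(\phi\restriction_{\P(G,\phi)}) = 0$. By the maximality of $\P(H,\phi\restriction_H)$ within $H$, this yields $\P(G,\phi)\cap H \subseteq \P(H,\phi\restriction_H)$.

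For the reverse inclusion, $\P(H,\phi\restriction_H)$ is a $\phi\restriction_H$-invariant subgroup of $H$, so it is automatically a $\phi$-invariant subgroup of $G$, and it satisfies $h(\phi\restriction_{\P(H,\phi\restriction_H)}) = 0$ by definition. The maximality of $\P(G,\phi)$ within $G$ then gives $\P(H,\phi\restriction_H) \subseteq \P(G,\phi)$, and since $\P(H,\phi\restriction_H) \subseteq H$ trivially, we conclude $\P(H,\phi\restriction_H) \subseteq \P(G,\phi) \cap H$.

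For part (b), I set $Q = \pi(\P(G,\phi))$. First I verify that $Q$ is an $\overline\phi$-invariant subgroup of $G/H$: since $\phi(\P(G,\phi)) \subseteq \P(G,\phi)$ and $\overline\phi \circ \pi = \pi \circ \phi$, we have $\overline\phi(Q) = \overline\phi(\pi(\P(G,\phi))) = \pi(\phi(\P(G,\phi))) \subseteq \pi(\P(G,\phi)) = Q$. Next I must show $h(\overline\phi\restriction_Q) = 0$. Observe that $Q \cong \P(G,\phi)/(\P(G,\phi)\cap H)$ as groups, and under this isomorphism $\overline\phi\restriction_Q$ is conjugated to the endomorphism of $\P(G,\phi)/(\P(G,\phi)\cap H)$ induced by $\phi\restriction_{\P(G,\phi)}$. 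By Fact \ref{conjugation_by_iso}(a) applied to $\P(G,\phi)$ with its invariant subgroup $\P(G,\phi)\cap H$, this induced endomorphism has entropy at most $h(\phi\restriction_{\P(G,\phi)}) = 0$, and by Fact \ref{conjugation_by_iso}(b) conjugation preserves entropy, so $h(\overline\phi\restriction_Q) = 0$. The maximality of $\P(G/H,\overline\phi)$ then yields $Q \subseteq \P(G/H,\overline\phi)$.

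Neither part should present any serious obstacle; both are straightforward consequences of the defining maximality property of the Pinsker subgroup combined with the basic monotonicity, conjugation invariance and quotient properties of $h$ collected in Fact \ref{conjugation_by_iso}. The only minor care is the identification in part (b) of $\overline\phi\restriction_{\pi(\P(G,\phi))}$ with a quotient of $\phi\restriction_{\P(G,\phi)}$ so that monotonicity under passage to quotients can be invoked.
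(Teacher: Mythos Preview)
Your proof is correct and follows essentially the same approach as the paper's. The paper's argument is simply much terser: for (a) it states the two inclusions with one-line justifications, and for (b) it says only ``Follows immediately from Fact~\ref{conjugation_by_iso}(a),'' which is precisely the quotient-monotonicity step you have carefully unpacked via the isomorphism $\pi(\P(G,\phi))\cong \P(G,\phi)/(\P(G,\phi)\cap H)$ and conjugation invariance.
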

\begin{proof}
(a) Since $h(\phi\restriction_{\P(H,\phi\restriction_H)})=0$, it follows that $\P(H,\phi\restriction_H)\subseteq \P(G,\phi)\cap H$. Since $\P(G,\phi)\cap H\subseteq \P(G,\phi)$, we have $h(\phi\restriction_{\P(G,\phi)\cap H})=0$ and so $\P(G,\phi)\cap H\subseteq\P(H,\phi\restriction_H)$.

\smallskip
(b) Follows immediately from Fact \ref{conjugation_by_iso}(a).
\end{proof}

The inclusion in item (b) of the above lemma cannot be replaced by equality (e.g., if $G= \Q$, $H = \Z$ and $\phi$ is defined by $\phi(x) = 2x$ for $x\in \Q$, then 
$\P(G,\phi)=0$, while $\P(G/H,\overline\f)=G/H$). 

Lemma \ref{pi<p} shows in particular the stability properties of the class of endomorphisms with zero algebraic entropy under taking invariant subgroups and quotients over invariant subgroups. By Fact \ref{properties}(d) this class is preserved also under taking finite direct products, and so also under taking arbitrary direct sums. Indeed, if $G=\bigoplus_{i\in I}G_i$, then $G$ is direct limit of $\bigoplus_{i\in F}G_i$ where $F$ runs over all non-empty finite subsets of $I$, and so Fact \ref{properties}(e) applies.
Example \ref{no:prod:stab} below shows that the class of endomorphisms with zero algebraic entropy is not preserved under taking arbitrary infinite direct products.

\begin{lemma}\label{Pi-tor}\emph{\cite{DGSZ}}
Let $(G,\phi)$ be an algebraic flow and assume that $G$ is torsion. Then:
\begin{itemize}
\item[(a)] $h(\phi\restriction_{t_\phi(G)})=0$ and $t_{\overline{\phi}}(G/t_\f(G))=0$ for the endomorphism $\overline{\phi}:G/t_\f(G)\to G/t_\f(G)$ induced by $\phi$;
\item[(b)] if $H$ is a $\phi$-invariant subgroup of $G$ and $h(\phi\restriction_{H})=0$, then $H\subseteq t_\phi(G)$.
\end{itemize}
\end{lemma}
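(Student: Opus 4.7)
My plan is to split the lemma into its three natural assertions --- $h(\phi\restriction_{t_\phi(G)})=0$, the implication (b), and $t_{\overline\phi}(G/t_\phi(G))=0$ --- and prove them in this order, so that the last can leverage the first two through the Addition Theorem. The first is essentially a definition chase: for every $x\in t_\phi(G)$ the subgroup $V(\phi,x)=T(\phi,\hull{x})$ is finite, hence for any $F\in[t_\phi(G)]^{<\omega}$ the subgroup $V(\phi,F)=\sum_{x\in F}V(\phi,x)$ is still finite. Since $T_n(\phi,F)\subseteq V(\phi,F)$ for every $n$, the sequence $\tau_{\phi,F}(n)$ is bounded, so $H(\phi,F)=0$, and taking the supremum over $F$ yields $h(\phi\restriction_{t_\phi(G)})=0$.

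For (b) I would fix $x\in H$ and consider $F:=\hull{x}$, a finite subgroup since $G$ is torsion. The trajectories $A_n:=T_n(\phi,F)$ are subgroups of $V(\phi,x)=\bigcup_n A_n$ forming an increasing chain. The key observation is that as soon as $A_{n-1}=A_n$ for some $n$, equivalently $\phi^{n-1}(F)\subseteq A_{n-1}$, the chain stabilizes: then $\phi(A_{n-1})=\phi(F)+\ldots+\phi^{n-1}(F)\subseteq A_{n-1}$, so $\phi^n(F)\subseteq A_{n-1}$, and iterating gives $A_m=A_{n-1}$ for all $m\geq n-1$. Hence either the chain stabilizes, in which case $V(\phi,x)$ is finite and $x\in t_\phi(G)$, or $[A_n:A_{n-1}]\geq 2$ for every $n\geq 2$, forcing $|A_n|\geq 2^{n-1}$ and
$$
H(\phi,F)=\lim_{n\to\infty}\frac{\log\tau_{\phi,F}(n)}{n}\geq\log 2>0,
$$
contradicting $h(\phi\restriction_H)=0$. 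So $x\in t_\phi(G)$ in either case.

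For $t_{\overline\phi}(G/t_\phi(G))=0$, let $\pi:G\to G/t_\phi(G)$ be the canonical projection, pick $y\in t_{\overline\phi}(G/t_\phi(G))$ and a preimage $x\in\pi^{-1}(y)$. Then $V(\overline\phi,y)=\pi(V(\phi,x))$ is finite, so the $\phi$-invariant subgroup $W:=V(\phi,x)+t_\phi(G)$ satisfies $W/t_\phi(G)\cong V(\overline\phi,y)$, a finite group. Since finite groups carry zero algebraic entropy and $h(\phi\restriction_{t_\phi(G)})=0$ from the first part of (a), the Addition Theorem (Theorem \ref{AT}) gives $h(\phi\restriction_W)=0$, whence (b) forces $W\subseteq t_\phi(G)$; in particular $x\in t_\phi(G)$ and $y=0$. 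I expect the main obstacle to be the subgroup-chain dichotomy inside (b): the crucial point is that for a finite subgroup $F$ each non-trivial enlargement of $T_n(\phi,F)$ at least doubles its size, so stabilization is the only alternative to genuinely exponential growth. This is a much cleaner dichotomy than the polynomial-versus-exponential one for finite subsets and is precisely what makes the torsion case of the Pinsker subgroup tractable without the heavier machinery developed later.
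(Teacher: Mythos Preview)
Your argument is correct in all three parts. Note, however, that the paper does not actually prove this lemma: it is quoted from \cite{DGSZ} and used as a black box, so there is no ``paper's own proof'' to compare against.

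A remark on your last step: invoking the full Addition Theorem~\ref{AT} works, but is heavier than necessary and, depending on how one reconstructs \cite{DGSZ}, could look circular (the torsion Addition Theorem is itself a result of that paper). A more self-contained route is available. With $y\in t_{\overline\phi}(G/t_\phi(G))$ and $x\in\pi^{-1}(y)$, finiteness of $V(\overline\phi,y)$ gives $m\geq 0$ and $p\geq 1$ with $\overline\phi^{k+p}(y)=\overline\phi^{k}(y)$ for all $k\geq m$; set $z:=\phi^{m+p}(x)-\phi^m(x)\in t_\phi(G)$, so $V(\phi,z)$ is finite. Then for every $j\geq 0$,
\[
\phi^{m+jp}(x)=\phi^m(x)+\sum_{i=0}^{j-1}\phi^{ip}(z)\in \phi^m(x)+V(\phi,z),
\]
so $\{\phi^{m+jp}(x):j\geq 0\}$ is finite, and applying $\phi^r$ for $0\leq r<p$ shows the whole orbit $\{\phi^k(x):k\in\N\}$ is finite. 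Since $G$ is torsion, the subgroup $V(\phi,x)$ it generates is a finitely generated torsion abelian group, hence finite, giving $x\in t_\phi(G)$ and $y=0$ directly. This keeps the proof of the lemma independent of Theorem~\ref{AT}; otherwise your approach is clean and your dichotomy in (b) --- stabilization versus index $\geq 2$ at every step --- is exactly the right idea.
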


\begin{example}\label{no:prod:stab}
For a prime $p$, let $G=\prod_{n\in\N_+}\Z(p)^n$ and for every $n\in\N_+$ consider the right shift $\beta_n:\Z(p)^n\to\Z(p)^n$ defined by $(x_1,x_2,\ldots,x_n)\mapsto(0,x_1,\ldots,x_{n-1})$. Since $\Z(p)^n$ is finite, $h(\beta_n)=\ent(\beta_n)=0$. On the other hand, let $\phi=\prod_{n\in\N_+} \beta_n:G\to G$; then $h(\phi)=\ent(\phi)>0$. Indeed, for $n\in\N_+$, let $x_n\in \Z(p)^n$ be such that $\beta_{\Z(p)}^{n-1}(x_n)\ne 0$, hence $x=(x_n)_{n\in\N_+}\in G$ has infinite trajectory under $\phi$. By Lemma \ref{Pi-tor}(b) we have $h(\phi)=\ent(\phi)>0$.
\end{example}

From items (a) and (b) of Lemma \ref{Pi-tor} we also obtain immediately

\begin{corollary}\label{t=P}
Let $(G,\phi)$ be an algebraic flow. If $G$ is torsion, then $\P(G,\f)=t_\phi(G)$.
\end{corollary}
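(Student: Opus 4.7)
The plan is to obtain both inclusions $\P(G,\phi) \subseteq t_\phi(G)$ and $t_\phi(G) \subseteq \P(G,\phi)$ directly from the two items of Lemma \ref{Pi-tor}, using only the definition of $\P(G,\phi)$ as the \emph{greatest} $\phi$-invariant subgroup on which the restriction of $\phi$ has zero algebraic entropy (whose existence is guaranteed by Proposition \ref{P:existence}).

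For the inclusion $t_\phi(G) \subseteq \P(G,\phi)$, I would first note that $t_\phi(G)$ is $\phi$-invariant: if $x \in G$ has $|T(\phi,\hull{x})| < \infty$, then $\phi(x) \in T(\phi,\hull{x})$ (after a shift) also has finite trajectory, so $\phi(t_\phi(G)) \subseteq t_\phi(G)$. Then Lemma \ref{Pi-tor}(a) gives $h(\phi\restriction_{t_\phi(G)}) = 0$, and the maximality in the definition of $\P(G,\phi)$ forces $t_\phi(G) \subseteq \P(G,\phi)$.

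For the reverse inclusion, the Pinsker subgroup $\P(G,\phi)$ is, by definition, a $\phi$-invariant subgroup of $G$ with $h(\phi\restriction_{\P(G,\phi)}) = 0$. Since $G$ is torsion, Lemma \ref{Pi-tor}(b) applies directly with $H = \P(G,\phi)$ and yields $\P(G,\phi) \subseteq t_\phi(G)$.

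There is no real obstacle here: the corollary is an immediate bookkeeping consequence of Lemma \ref{Pi-tor}, once one observes that $t_\phi(G)$ is $\phi$-invariant (so that it is a legitimate candidate to be compared with $\P(G,\phi)$). The substantive content is entirely inside Lemma \ref{Pi-tor}, which is cited from \cite{DGSZ}.
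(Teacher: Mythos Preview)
Your proof is correct and is exactly the argument the paper intends: the corollary is stated immediately after Lemma~\ref{Pi-tor} with the remark that it follows from items (a) and (b), and your two inclusions unpack precisely those two items together with the maximality in the definition of $\P(G,\phi)$.
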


%Moreover, in the torsion case its quite easy to see that the Pinsker subgroup coincides also with the $\QP$-subgroup for any endomorphism.
Let $(G,\phi)$ be an algebraic flow. A point $x\in G$ is a \emph{periodic} point of $\phi$ if there exists $n\in\N_+$ such that $\phi^n(x) =x$. Let $P_1(G,\f)$ be the subset of $G$ of all periodic points of $\phi$. Obviously, $P_1(G,\f)$ is a $\phi$-invariant subgroup of $G$. Moreover, $x\in G$ is a \emph{quasi-periodic} point of $\phi$ if there exist $n>m$ in $\N$ such that $\phi^n(x)=\phi^m(x)$. Let $Q_1(G,\phi)$ be the subset of $G$ of all quasi-periodic points of $\phi$. Hence $Q_1(G,\f)$ is a $\phi$-invariant subgroup of $G$ as well, since $Q_1(G,\f)= \bigcup_{n\in\N} \f^{-n}(P_1(G,\f))$.

Clearly, the quasi-periodic points of an injective endomorphism $\phi$ are periodic, that is, $Q_1(G,\f)=P_1(G,\f)$. 

If $P_1(G,\f)=G$, we say that $\f$ is \emph{locally periodic}, i.e., for every $x\in G$ there exists $n\in\N_+$ such that $\phi^n(x)=x$. Moreover, $\f$ is \emph{periodic} if there exists $n\in\N_+$ such that $\phi^n(x)=x$ for every $x\in G$. Analogously, if $Q_1(G,\f)=G$, we say that $\f$ is \emph{locally quasi-periodic}, i.e., for every $x\in G$ there exist $n>m$ in $\N$ such that $\phi^n(x)=\phi^m(x)$. Finally, $\phi$ is \emph{quasi-periodic} if there exist $n>m$ in $\N$ such that $\phi^n(x)=\phi^m(x)$ for every $x\in G$.

\begin{proposition}\label{t=Q1}
Let $(G,\phi)$ be an algebraic flow. Then 
$$
t_\phi(G)= t(G)\cap Q_1(G,\f)\subseteq Q_1(G,\f)\subseteq\P(G,\f).
$$ 
If $G$ is torsion, then all these three subgroups coincide. 
\end{proposition}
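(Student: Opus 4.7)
My plan is to verify, in order, the equality $t_\phi(G) = t(G) \cap Q_1(G,\phi)$, the key inclusion $Q_1(G,\phi) \subseteq \P(G,\phi)$, and the torsion case.

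The first equality is essentially bookkeeping. Since $T(\phi,\hull{x})$ is contained in and generates $V(\phi,x)$, the membership $x\in t_\phi(G)$ amounts to $V(\phi,x)$ being finite. If $x\in t_\phi(G)$, then $x\in V(\phi,x)$ forces $x\in t(G)$, and the iterates $\phi^k(x)$ all lie in the finite set $V(\phi,x)$, so two must coincide and $x\in Q_1(G,\phi)$. Conversely, if $x\in t(G)\cap Q_1(G,\phi)$ with $\phi^n(x)=\phi^m(x)$ for some $n>m$, then $\{\phi^k(x):k\in\N\}$ is finite (eventually periodic with period dividing $n-m$) and each of its elements has finite order dividing that of $x$; so $V(\phi,x)$ is a finitely generated torsion group, hence finite, giving $x\in t_\phi(G)$.

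For the key inclusion $Q_1(G,\phi)\subseteq \P(G,\phi)$, my plan is to prove $h(\phi\restriction_{Q_1(G,\phi)})=0$. Using Fact \ref{conjugation_by_iso}(e) to express $Q_1(G,\phi)$ as the direct limit of its $\phi$-invariant subgroups $V(\phi,F)$ with $F$ a finite subset of $Q_1(G,\phi)$, together with Fact \ref{conjugation_by_iso}(a),(d) to handle the finite sums $V(\phi,F)=\sum_{x\in F}V(\phi,x)$, the task reduces to showing $h(\phi\restriction_{V(\phi,x)})=0$ for each quasi-periodic $x$. Fixing such an $x$ and $n>m$ with $\phi^n(x)=\phi^m(x)$, the set $\{\phi^k(x):k\in\N\}$ has at most $n$ elements, so $V(\phi,x)=\hull{x,\phi(x),\ldots,\phi^{n-1}(x)}$ is finitely generated abelian, and the restriction $\psi$ of $\phi$ satisfies $\psi^n=\psi^m$. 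The Addition Theorem \ref{AT} applied to the (invariant, finite) torsion subgroup $T$ of $V(\phi,x)$ reduces the task to the induced map on $V(\phi,x)/T\cong\Z^r$, which still satisfies the relation. By Lemma \ref{AA} I then pass to the $\Q$-linear extension $\widetilde\psi$ on $\Q^r$, still satisfying $\widetilde\psi^n=\widetilde\psi^m$. Setting $K=\ker_\infty\widetilde\psi$: on $K$ the map $\widetilde\psi$ is nilpotent, and iterating Theorem \ref{AT} along the filtration $\{\ker\widetilde\psi^i\}$ (whose successive quotients carry the zero endomorphism, of entropy $0$) yields $h(\widetilde\psi\restriction_K)=0$. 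On $\Q^r/K$, $\widetilde\psi$ is an automorphism whose minimal polynomial divides $x^{n-m}-1$, so its characteristic polynomial is a product of cyclotomic polynomials: it has integer coefficients (so $s=1$) and all its roots are roots of unity. The Algebraic Yuzvinski Formula (Theorem \ref{Yuz}) then returns entropy $0$. Assembling via Theorem \ref{AT} yields $h(\psi)=0$.

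The torsion case is immediate: when $G=t(G)$, the first equality collapses to $t_\phi(G)=Q_1(G,\phi)$, and Corollary \ref{t=P} supplies $\P(G,\phi)=t_\phi(G)$, closing the chain.

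The main obstacle is the inclusion $Q_1(G,\phi)\subseteq\P(G,\phi)$: translating the purely set-theoretic notion of quasi-periodicity into the vanishing of algebraic entropy. The crux is exploiting $\widetilde\psi^n=\widetilde\psi^m$ to constrain every eigenvalue of the $\Q$-linear extension to be either $0$ or a root of unity, which then makes the Algebraic Yuzvinski Formula produce entropy $0$ on the non-nilpotent part (with denominator $s=1$), while the nilpotent part is handled by iterating the Addition Theorem.
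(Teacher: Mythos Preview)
Your proof is correct, but it takes a far heavier route than the paper for the inclusion $Q_1(G,\phi)\subseteq\P(G,\phi)$. The paper's argument is a two-line direct computation: given $F\in[Q_1(G,\phi)]^{<\omega}$, each $f\in F$ has a finite orbit $\{\phi^k(f):k\in\N\}$, so the family $\{\phi^k(F):k\in\N\}$ is finite and hence $T_n(\phi,F)=F+\phi(F)+\cdots+\phi^{n-1}(F)$ stabilises for $n\geq m$ (some $m$), giving $H(\phi,F)=0$ immediately. No Addition Theorem, no Yuzvinski Formula, no passage to $\Q^r$.

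Your detour through finitely generated abelian groups, Lemma~\ref{AA}, the hyperkernel filtration, and Theorem~\ref{Yuz} is mathematically sound---in particular your observation that $\phi^n(x)=\phi^m(x)$ propagates to $\psi^n=\psi^m$ on all of $V(\phi,x)$ is correct and is the key step---but once you have $\psi^n=\psi^m$ globally on $V(\phi,x)$ you could have stopped: that relation alone forces $\{\psi^k(F'):k\in\N\}$ to be finite for \emph{any} finite $F'\subseteq V(\phi,x)$, and you are done by the same stabilisation argument. What your approach buys is an independent verification via the structural tools of the paper (and it illustrates nicely how Yuzvinski handles roots of unity), but it invokes the two deepest results of the paper (Theorems~\ref{AT} and~\ref{Yuz}) where the paper deliberately uses none. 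For the first equality and the torsion case your arguments match the paper's.
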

\begin{proof}
The inclusion $t_\phi(G)\subseteq t(G)$ is obvious. If $x\in t_\phi(G)$, then $V(\phi,x)$ is a finite $\f$-invariant subgroup of $G$ and so $V(\phi,x)\subseteq Q_1(G,\f)$, hence $t_\phi(G)\subseteq Q_1(G,\phi)$. Since every quasi-periodic point has finite trajectory, for $x\in t(G)\cap Q_1(G,\f)$ one has $x\in t_\phi(G)$. This proves the first equality. Moreover, $Q_1(G,\f)\subseteq\P(G,\f)$, since for every $F\in[G]^{<\omega}$ with $F\subseteq Q_1(G,\phi)$, there exists $m\in\N_+$ such that $T_n(\phi,F)=T_m(\phi,F)$ for every $n\in\N$, $n\geq m$, hence $H(\phi,F)=0$. This proves that $h(\phi\restriction_{Q_1(G,\phi)})=0$, and so $Q_1(G,\phi)\subseteq \P(G,\phi)$.

If $G$ is torsion, $\P(G,\f)=t_\f(G)$ by Corollary \ref{t=P}.
\end{proof}

 Note that always $
t_{id_G}(G)= t(G)\subseteq Q_1(G,id_G)\subseteq\P(G,id_G).$

 The example of torsion abelian groups gives the motivation and the idea on how to approach the Pinsker subgroup of arbitrary abelian groups making use of quasi-periodic points. To this end we need a generalization of the notions of periodic and quasi-periodic points.

%%%%%%%%%%%%%%%%%%%%%%%%%%%%%%%%%%%%%%%%%%%%%
%  ----------       Generalized quasi-periodic points     ----------------------------------------------------------
%%%%%%%%%%%%%%%%%%%%%%%%%%%%%%%%%%%%%%%%%%%%%

\section{Generalized quasi-periodic points} \label{QP-sec}

Let $(G,\phi)$ be an algebraic flow. We extend the definition of $P_1(G,\phi)$ setting by induction:
\begin{itemize}
\item[(a)] $P_0(G,\f)=0$, and for every $n\in\N$
\item[(b)] $P_{n+1}(G,\f)=\{x\in G: (\exists n\in \N_+)\ \phi^n(x)-x\in P_n(G,\phi)\}$.
\end{itemize}
This gives an increasing chain 
$$
P_0(G,\f)\subseteq P_1(G,\f)\subseteq\ldots\subseteq P_n(G,\f)\subseteq\ldots.
$$ 
We show below that all members of this chain are $\phi$-invariant subgroups of $G$. Our interest in these subgroups is motivated by the fact that they are contained in $\P(G,\f)$. 
Although this fact can be proved directly, we shall obtain it as a consequence of a stronger property (see Remark \ref{uff-rem}).

In order to approximate better $\P(G,\f)$ we need to enlarge these subsets introducing for every $n\in\N$ appropriate counterparts of $Q_1(G,\phi)$, as follows. Define
\begin{itemize}
\item[(a)] $Q_0(G,\f)=0$, and for every $n\in\N$
\item[(b)] $Q_{n+1}(G,\f)=\{x\in G: (\exists n>m \ \text{in}\ \N)\ (\phi^n-\phi^m)(x)\in Q_n(G,\phi)\}$.
\end{itemize}
%Clearly, $\f$ is locally periodic (respectively, locally quasi-periodic) if and only if $G=P_1(G,\f)$ (respectively, $G=Q_1(G,\f)$). 

We get an increasing chain 
$$
Q_0(G,\f)\subseteq Q_1(G,\f)\subseteq \ldots\subseteq Q_n(G,\f)\subseteq \ldots.
$$
Again we show below that the members of this chain are $\phi$-invariant subgroups of $G$.
One can prove by induction that for injective endomorphisms $\phi$ of an abelian group $G$, $Q_n(G,\f)=P_n(G,\f)$ for every $n\in\N$.

\begin{deff}
For an algebraic flow $(G,\phi)$, let %. %The \emph{$\QP$-subgroup} of $G$ with respect to $\phi$ is 
$\QQ(G,\f)=\bigcup_{n\in\N} Q_n(G,\f)$.
\end{deff}

\begin{proposition}\label{PnQn}
Let $(G,\phi)$ be an algebraic flow.
\begin{itemize}
\item[(a)] For every $n\in\N$, $P_n(G,\phi)$ is a $\phi$-invariant subgroup of $G$, and for the induced endomorphism $\overline \f_n:G/ P_n(G,\f) \to G/ P_n(G,\f)$ and the canonical projection $\pi_n: G \to G/P_n(G,\f)$, 
\begin{equation}\label{P-eq}
P_{n+1}(G,\f)=\pi_n^{-1}(P_1(G/P_n(G,\f),\overline \f_n))\ (\text{i.e.,}\ P_{n+1}(G,\f)/P_n(G,\f)=P_1(G/P_n(G,\f),\overline\f_n)).
\end{equation}
\item[(b)] For every $n\in\N$, $Q_n(G,\phi)$ is a $\phi$-invariant subgroup of $G$, and for the induced endomorphism $\overline \f_n:G/  Q_n(G,\f) \to G/  Q_n(G,\f)$ and the canonical projection $\pi_n: G \to G/ Q_n(G,\f)$, 
\begin{equation}\label{Q-eq}
Q_{n+1}(G,\f)=\pi_n^{-1}(Q_1(G/ Q_n(G,\f),\overline \f_n))\ (\text{i.e.,}\  Q_{n+1}(G,\f)/ Q_n(G,\f)= Q_1(G/ Q_n(G,\f),\overline\f_n)).
\end{equation}
\item[(c)] $\QQ(G,\f)$ is a $\f$-invariant subgroup of $G$.
\end{itemize}
\end{proposition}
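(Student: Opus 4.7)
The plan is to prove (a) and (b) by induction on $n$, treating the two in parallel, and then to deduce (c) as a one-line corollary of (b).

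For the base case $n=0$, the subgroup $P_0(G,\phi) = 0 = Q_0(G,\phi)$ is trivially $\phi$-invariant. The case $n=1$ is essentially given in the text: $P_1(G,\phi)$ is closed under addition (if $\phi^j(x)=x$ and $\phi^\ell(y)=y$ then $\phi^{j\ell}(x+y)=x+y$) and under $\phi$, and $Q_1(G,\phi) = \bigcup_{k \in \N} \phi^{-k}(P_1(G,\phi))$ is explicitly shown to be a $\phi$-invariant subgroup in the paragraph preceding Proposition~\ref{t=Q1}.

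For the inductive step of (a), assume that $P_n(G,\phi)$ is a $\phi$-invariant subgroup, so the induced endomorphism $\overline{\phi}_n$ on $G/P_n(G,\phi)$ and the projection $\pi_n$ are well-defined. The key observation is that for $x \in G$ and $k \in \N_+$,
$$
\phi^k(x) - x \in P_n(G,\phi) \iff \pi_n(\phi^k(x)) = \pi_n(x) \iff \overline{\phi}_n^{\,k}(\pi_n(x)) = \pi_n(x),
$$
so $x \in P_{n+1}(G,\phi)$ if and only if $\pi_n(x) \in P_1(G/P_n(G,\phi),\overline{\phi}_n)$. This proves the identity \eqref{P-eq}. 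Applying the already-established $n=1$ case to the flow $(G/P_n(G,\phi),\overline{\phi}_n)$ shows that $P_1(G/P_n(G,\phi),\overline{\phi}_n)$ is an $\overline{\phi}_n$-invariant subgroup; taking the preimage under the homomorphism $\pi_n$ gives a $\phi$-invariant subgroup of $G$, namely $P_{n+1}(G,\phi)$, completing the induction.

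The argument for (b) is entirely parallel, with the condition $(\phi^k - \phi^m)(x) \in Q_n(G,\phi)$ translating under $\pi_n$ to $\overline{\phi}_n^{\,k}(\pi_n(x)) = \overline{\phi}_n^{\,m}(\pi_n(x))$, which is equivalent to $\pi_n(x) \in Q_1(G/Q_n(G,\phi),\overline{\phi}_n)$, yielding \eqref{Q-eq} and the $\phi$-invariance of $Q_{n+1}(G,\phi)$ by the same preimage argument. Finally, part (c) is immediate: by (b), $\{Q_n(G,\phi)\}_{n \in \N}$ is an increasing chain of $\phi$-invariant subgroups of $G$, so $\QQ(G,\phi) = \bigcup_{n \in \N} Q_n(G,\phi)$ is a $\phi$-invariant subgroup. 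There is no serious obstacle here: the whole proof is a direct unfolding of the recursive definitions, and the only conceptual point is the translation of the defining condition through the projection $\pi_n$, which makes the inductive step collapse to the already-verified $n=1$ case in the quotient flow.
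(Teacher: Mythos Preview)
Your proof is correct and follows essentially the same approach as the paper's: an induction on $n$ in which the defining condition $\phi^k(x)-x\in P_n(G,\phi)$ (respectively $(\phi^k-\phi^m)(x)\in Q_n(G,\phi)$) is translated via $\pi_n$ into membership in $P_1$ (respectively $Q_1$) of the quotient flow, yielding \eqref{P-eq} and \eqref{Q-eq}, after which $\phi$-invariance follows from taking preimages. The paper's proof is nearly identical, merely a bit terser on the base cases and on part (c).
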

\begin{proof} 
(a) We proceed by induction. The cases $n=0$ and $n=1$ are trivial.  
Let $n\in\N$ and assume that $P_n(G,\phi)$ is a $\phi$-invariant subgroup of $G$.

Let $x\in P_{n+1}(G,\f)$. This is equivalent to the existence of $k\in\N_+$ such that $\phi^k(x)- x\in P_n(G,\f)$. This occurs if and only if $\pi_n(\phi^k(x)-x)=0$ in $G/ P_n(G,\f)$. Since $\overline\f_n^k(\pi_n(x))-\pi_n(x)=\pi_n(\phi^k(x)-x)$, this is equivalent to $\pi_n(x)\in P_1(G/P_n(G,\f),\overline\f_n)$, that is, $x\in\pi_n^{-1}(P_1(G/P_n(G,\f),\overline \f_n))$. So this proves \eqref{P-eq}. %that $P_{n+1}(G,\f)=\pi_n^{-1}(P_1(G/P_n(G,\f),\overline \f_n))$.

\smallskip 
Since $P_1(G/P_n(G,\f),\overline \f_n)$ is a $\overline\phi_n$-invariant subgroup of $G/P_n(G,\f)$, its counterimage $P_{n+1}(G,\f)$ is a $\phi$-invariant subgroup of $G$.

\smallskip
(b) Argue as in (a). 

\smallskip
(c) By (a), (b) and its definition, $\QQ(G,\f)$ is a $\f$-invariant subgroup of $G$.
\end{proof}

It is worth noting the following clear property.

\begin{lemma}\label{Q=0<->Q1=0}
If $(G,\phi)$ is an algebraic flow, then $\QQ(G,\phi)=0$ if and only if $Q_1(G,\phi)=0$. 
\end{lemma}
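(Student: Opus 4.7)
The forward direction is trivial: if $\QQ(G,\phi)=0$, then since $Q_1(G,\phi)\subseteq \QQ(G,\phi)$ by construction, necessarily $Q_1(G,\phi)=0$.

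For the nontrivial direction, my plan is to show by induction on $n\in\N$ that $Q_n(G,\phi)=0$ under the assumption that $Q_1(G,\phi)=0$. The cases $n=0$ and $n=1$ are immediate (the base $Q_0(G,\phi)=0$ holds by definition). For the inductive step, assume $Q_n(G,\phi)=0$ for some $n\geq 1$. Then the canonical projection $\pi_n\colon G\to G/Q_n(G,\phi)$ is an isomorphism, and the induced endomorphism $\overline{\phi}_n$ is conjugate to $\phi$ via $\pi_n$; in particular, $Q_1(G/Q_n(G,\phi),\overline{\phi}_n)$ corresponds to $Q_1(G,\phi)=0$. Applying the identity \eqref{Q-eq} from Proposition \ref{PnQn}(b), namely
\[
Q_{n+1}(G,\phi)=\pi_n^{-1}\bigl(Q_1(G/Q_n(G,\phi),\overline{\phi}_n)\bigr),
\]
we conclude $Q_{n+1}(G,\phi)=\pi_n^{-1}(0)=Q_n(G,\phi)=0$, closing the induction.

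Taking the union $\QQ(G,\phi)=\bigcup_{n\in\N} Q_n(G,\phi)=0$ then yields the claim. I do not anticipate any real obstacle here: the whole content is bookkeeping, and the key mechanism is that when $Q_n(G,\phi)$ vanishes, the recursive definition of $Q_{n+1}$ collapses to the definition of $Q_1$ applied to the original flow. So the recurrence \eqref{Q-eq} established in Proposition \ref{PnQn}(b) is doing all the work, and the only thing to verify is that the quotient by the zero subgroup gives back the original algebraic flow, which is obvious.
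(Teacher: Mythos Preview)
Your proof is correct. The paper does not actually supply a proof of this lemma: it is introduced with the phrase ``It is worth noting the following clear property'' and left without argument. Your inductive verification via the recursion \eqref{Q-eq} is a perfectly valid way to unpack why it is clear; one could equally well bypass Proposition~\ref{PnQn} and argue directly from the recursive definition, since $Q_n(G,\phi)=0$ makes the defining condition for $Q_{n+1}(G,\phi)$ literally identical to that for $Q_1(G,\phi)$.
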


Recall that a subgroup $H$ of an abelian group $G$ is \emph{pure} in $G$ if and only if $G/H$ is torsion-free.

\begin{lemma}
Let $(G,\phi)$ be an algebraic flow. If $G$ is torsion-free, then $Q_n(G,\f)$, for every $n\in\N$, and $\QQ(G,\f)$ are pure in $G$.
\end{lemma}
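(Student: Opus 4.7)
The plan is to prove the $Q_n$ statement by induction on $n$, and then deduce the result for $\mathfrak Q(G,\phi)$ as a union. Since $G$ is torsion-free, recall that a subgroup $H \le G$ is pure exactly when $kx \in H$ for some $k \in \N_+$ and $x \in G$ forces $x \in H$ (this is just the equivalent form of $G/H$ being torsion-free under the standing torsion-free hypothesis).

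For the base case, $Q_0(G,\phi) = 0$ is pure in $G$ precisely because $G$ is torsion-free. For the inductive step, assume that $Q_n(G,\phi)$ is pure in $G$, and suppose $kx \in Q_{n+1}(G,\phi)$ for some $k \in \N_+$ and $x \in G$. By the defining recursion of $Q_{n+1}(G,\phi)$, there exist $r > s$ in $\N$ with
\[
(\phi^r - \phi^s)(kx) \in Q_n(G,\phi).
\]
Since $\phi^r - \phi^s$ is a group endomorphism of $G$, this element equals $k\,(\phi^r - \phi^s)(x)$, so we have $k\,(\phi^r - \phi^s)(x) \in Q_n(G,\phi)$. The inductive hypothesis (purity of $Q_n(G,\phi)$) then gives $(\phi^r - \phi^s)(x) \in Q_n(G,\phi)$, which by the definition of $Q_{n+1}(G,\phi)$ means $x \in Q_{n+1}(G,\phi)$. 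This closes the induction.

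Finally, for $\mathfrak Q(G,\phi) = \bigcup_{n \in \N} Q_n(G,\phi)$: if $kx \in \mathfrak Q(G,\phi)$ for some $k \in \N_+$, then $kx \in Q_n(G,\phi)$ for some $n$, and by purity of $Q_n(G,\phi)$ already established, $x \in Q_n(G,\phi) \subseteq \mathfrak Q(G,\phi)$. Hence $\mathfrak Q(G,\phi)$ is pure in $G$ as well. There is no substantive obstacle here; the key mechanical observation is simply that $\phi^r - \phi^s$ is $\Z$-linear, so it commutes with multiplication by $k$, allowing purity to be transported up the recursion.
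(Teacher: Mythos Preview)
Your proof is correct and follows the same inductive strategy as the paper, with a slightly cleaner execution of the inductive step. The paper first proves the case $n=1$ directly (exactly as you would), but then for the step $n\to n+1$ it passes to the quotient $G_n=G/Q_n(G,\phi)$, invokes the identification $Q_{n+1}(G,\phi)/Q_n(G,\phi)=Q_1(G_n,\overline\phi_n)$ from \eqref{Q-eq}, and applies the $n=1$ case to the torsion-free group $G_n$. You instead work directly with the recursive definition of $Q_{n+1}$ and use purity of $Q_n$ inside $G$, avoiding the detour through \eqref{Q-eq}; this is more self-contained and arguably more transparent, since the only fact really needed is that $\phi^r-\phi^s$ commutes with multiplication by $k$. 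Both arguments handle $\mathfrak Q(G,\phi)$ identically as an increasing union of pure subgroups.
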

\begin{proof}
We proceed by induction. The case $n=0$ is trivial.

Let $k\in\N$ and $x\in  Q_1(G,\f)\cap k G$. Then $x=k y$ for some $y\in G$ and there exist $n>m$ in $\N$ such that $\phi^n(x)=\phi^m(x)$. Consequently, $k\phi^n(y)=k \phi^m(y)$, which yields $\phi^n(y)=\phi^m(y)$ as $G$ is torsion-free. So $y\in Q_1(G,\f)$, i.e., $x\in k Q_1(G,\f)$. This proves that $Q_1(G,\phi)$ is pure in $G$.

In the sequel we denote $G/Q_m(G,\f)$ by $G_m$ for $m\in\N$. Assume that $Q_{n}(G,\f)$ is pure in $G$ (so $G_n$ is torsion-free) for some $n\in\N$. 
We have to show that $G_{n+1}$ is torsion-free as well. Let $\overline\phi_n:G_n\to G_n$ be the endomorphism induced by $\phi$.
Then $G_{n+1}=G/Q_{n+1}(G,\f)\cong G_n/(Q_{n+1}(G,\f)/Q_n(G,\f))=G_n/Q_1(G_n,\overline\phi_n)$, where the last equality holds in view of \eqref{Q-eq}. 
By the case $n=1$, applied to the torsion-free group $G_n$, the latter quotient is torsion-free, so $G_{n+1}$ is torsion-free as well.

As an increasing union of pure subgroups, $\QQ(G,\f)$ is pure itself.
\end{proof}

\begin{claim}\label{claim}
Let $(G,\phi)$ be an algebraic flow, $H$ a $\f$-invariant subgroup of $G$ and $x\in G$. If $\phi^k(x)-x\in H$ for some $k\in\N_+$, then $\phi^{nk}(x)-x\in H$ for every $n\in\N_+$.
\end{claim}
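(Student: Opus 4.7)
The plan is a straightforward induction on $n \in \N_+$, exploiting the $\phi$-invariance of $H$ together with a telescoping identity.

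The base case $n = 1$ is precisely the hypothesis $\phi^k(x) - x \in H$. For the inductive step, assume $\phi^{nk}(x) - x \in H$. The key observation is the identity
\[
\phi^{(n+1)k}(x) - x = \bigl(\phi^{(n+1)k}(x) - \phi^k(x)\bigr) + \bigl(\phi^k(x) - x\bigr) = \phi^k\bigl(\phi^{nk}(x) - x\bigr) + \bigl(\phi^k(x) - x\bigr).
\]
By the inductive hypothesis, $\phi^{nk}(x) - x \in H$, and since $H$ is $\phi$-invariant, $\phi^k(H) \subseteq H$, so the first summand lies in $H$. The second summand lies in $H$ by the hypothesis of the claim. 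Since $H$ is a subgroup, the sum is in $H$, completing the induction.

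There is no real obstacle here; the only thing to be mindful of is that we use the $\phi$-invariance of $H$ iteratively (i.e., $\phi^k(H) \subseteq H$, which follows immediately from $\phi(H) \subseteq H$ by iteration), and that $H$ being a subgroup allows us to add the two elements of $H$ together.
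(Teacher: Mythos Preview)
Your proof is correct and is essentially the same induction as the paper's, just carried out directly in $G$ rather than after passing to the quotient $G/H$. The paper observes that $\phi^{m}(x)-x\in H$ is equivalent to $\overline\phi^{m}(\pi(x))=\pi(x)$ in $G/H$ and then inducts on $n$; your telescoping identity is precisely what that quotient induction unpacks to.
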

\begin{proof}
Let $n\in\N_+$, let $\overline\phi:G/H\to G/H$ be the endomorphism induced by $\phi$, and $\pi:G\to G/H$ the canonical projection. Note that $\phi^{m}(x)-x\in H$ for some $m\in \N_+$ if and only if $\overline\phi^{m}(\pi(x))=\pi(x)$ and we proceed by induction on $n$.  
%For $n=1$, we find the hypothesis. If $\overline\phi^{kn}(\pi(x))=\pi(x)$, then $\overline\phi^{k(n+1)}(\pi(x))=\overline\phi^k(\overline\phi^{kn}(\pi(x)))=\overline\phi^k(\pi(x))=\pi(x)$.
%Assume as inductive hypothesis that $\phi^{mk}(x)-x\in H$ for every $m\in\N_+$ with $m\leq n$; in particular, there exists $h\in H$ such that $\phi^{nk}(x)=x+h$. Then $\phi^{(n+1)k}(x)-x=\phi^k(\phi^{nk}(x))-x=\phi^k(x+h)-x=\phi^k(x)-x+\phi^k(h) \in H$.
\end{proof}

Note that $\ker_\infty\phi\subseteq Q_1(G,\phi)$, so also $\ker_\infty\phi\subseteq Q_n(G,\phi)$ for every $n\in\N_+$, and in particular $\ker_\infty\phi\subseteq\QQ(G,\phi)$.

\begin{lemma}\label{Q=P+K}
Let $(G,\phi)$ be an algebraic flow. Then $Q_n(G,\f)=P_n(G,\f)\oplus\ker_\infty\f$ for every $n\in\N_+$.
\end{lemma}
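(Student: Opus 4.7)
The plan splits into three parts: (i) verifying the easy inclusion $P_n(G,\phi)+\ker_\infty\phi \subseteq Q_n(G,\phi)$, (ii) checking that the sum is direct, and (iii) proving the reverse inclusion by lifting periodic points from the quotient $\overline G := G/\ker_\infty\phi$. For (i), $P_n(G,\phi) \subseteq Q_n(G,\phi)$ is immediate by taking $m=0$ in the defining condition of $Q_{n+1}(G,\phi)$, while $\ker_\infty\phi \subseteq Q_1(G,\phi) \subseteq Q_n(G,\phi)$ was recorded just before the statement. For (ii) I would show $P_n(G,\phi) \cap \ker_\infty\phi = 0$ by induction on $n$: in the base case $n=1$, the conditions $\phi^k(x)=x$ and $\phi^m(x)=0$ combine to give $x = \phi^{km}(x)=0$; in the inductive step, if $x \in P_{n+1}(G,\phi)\cap\ker_\infty\phi$ with $\phi^k(x)-x \in P_n(G,\phi)$ and $\phi^m(x)=0$, then Claim \ref{claim} applied with $H = P_n(G,\phi)$ yields $\phi^{jk}(x)-x \in P_n(G,\phi)$ for all $j \in \N_+$, so choosing $j$ with $jk \geq m$ gives $-x \in P_n(G,\phi)$ and the inductive hypothesis forces $x=0$.

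For (iii), I would pass to $\overline G$, on which the induced endomorphism $\overline\phi$ is injective (as recorded in the Notation section), so $Q_n(\overline G,\overline\phi) = P_n(\overline G,\overline\phi)$ (noted in this section). Thus for $x \in Q_n(G,\phi)$ one has $\pi(x) \in P_n(\overline G,\overline\phi)$, and the claim reduces to the following lifting statement: \emph{every $y \in P_n(\overline G,\overline\phi)$ has the form $\pi(p)$ for some $p \in P_n(G,\phi)$}. I would prove this by induction on $n$. For $n=1$: given $y$ with $\overline\phi^k(y)=y$ and any lift $x$, the element $z := \phi^k(x)-x$ lies in $\ker_\infty\phi$, say $\phi^m(z)=0$, and then $p := \phi^{Nk}(x)$ for any $N$ with $Nk \geq m$ satisfies $\phi^k(p)-p = \phi^{Nk}(z) = 0$ and $\pi(p) = \overline\phi^{Nk}(y) = y$. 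For the inductive step, $y \in P_n(\overline G,\overline\phi)$ gives $k$ with $\overline\phi^k(y)-y \in P_{n-1}(\overline G,\overline\phi)$, which the inductive hypothesis lifts to $q \in P_{n-1}(G,\phi)$; for any lift $x$ of $y$ one writes $\phi^k(x)-x = q+z$ with $z \in \ker_\infty\phi$ and $\phi^m(z) = 0$. The crucial move is to solve $(1-\phi^k)(u) = z$ inside $\ker_\infty\phi$ via the telescoping sum
\[
u := \sum_{j=0}^{m-1}\phi^{jk}(z),
\]
which works because $\phi^{mk}(z)=0$. Setting $p := x+u$, one checks $\pi(p) = y$ (since $u \in \ker_\infty\phi$) and $\phi^k(p)-p = (q+z)+(\phi^k(u)-u) = q \in P_{n-1}(G,\phi)$, so $p \in P_n(G,\phi)$.

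The main obstacle is precisely this inductive lifting step in (iii): the naive lift $x$ only ensures $\phi^k(x)-x \in P_{n-1}(G,\phi)+\ker_\infty\phi$, and one must modify $x$ by an element of $\ker_\infty\phi$ — so as to preserve its coset modulo $\ker_\infty\phi$ — in order to force $\phi^k(p)-p$ to land exactly in $P_{n-1}(G,\phi)$ rather than merely in its sum with $\ker_\infty\phi$. The only structural feature of $\ker_\infty\phi$ available is that each of its elements is killed by some power of $\phi$, and the telescoping formula for $u$ exploits exactly this to absorb the error $z$; this is the heart of the proof.
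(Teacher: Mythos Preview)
Your proof is correct and follows essentially the same architecture as the paper's: the directness argument (ii) is identical (induction using Claim~\ref{claim}), and the reverse inclusion (iii) is likewise reduced to the lifting statement $\pi(P_n(G,\phi))=P_n(G/\ker_\infty\phi,\overline\phi)$ via injectivity of $\overline\phi$. The only difference is the mechanism of the lift: the paper, instead of your telescoping correction $u=\sum_{j=0}^{m-1}\phi^{jk}(z)$, simply observes that for large enough $k$ one has $\phi^k(\phi^s(x)-x)\in P_n(G,\phi)$, whence $\phi^k(x)\in P_{n+1}(G,\phi)$ and so (using Claim~\ref{claim} once more) $x\in P_{n+1}(G,\phi)+\ker_\infty\phi$; both tricks exploit exactly the same feature of $\ker_\infty\phi$, so this is a cosmetic variation rather than a different route.
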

\begin{proof}
We start proving that $P_n(G,\phi)\cap \ker_\infty\phi=0$, proceeding by induction. For $n=1$, let $x\in P_1(G,\f)\cap \ker_\infty\f$. Then there exist 
$s,t\in\N_+$ such that $\f^s(x)=x$ and $\f^t(x)=0$. Consequently, $x=\f^{st}(x)=0$. 
Assume now that $n\in\N_+$ and that $P_{n}(G,\f)\cap\ker_\infty\f=0$. Let $x\in P_{n+1}(G,\f)\cap\ker_\infty\f$. Then there exist $s,t\in\N_+$ such that $\f^s(x)-x\in P_{n}(G,\f)$ and $\f^t(x)=0$. By Claim \ref{claim}, $\f^{st}(x)-x\in P_{n}(G,\f)$. Since $\phi^{st}(x)=0$, this yields   $x\in P_{n}(G,\f)\cap \ker_\infty\f$. By the inductive hypothesis $x=0$.

\smallskip
For the quotient $G/\ker_\infty\f$ and the canonical projection $\pi:G\to G/\ker_\infty\f$, the induced 
 endomorphism $\overline\f:G/\ker_\infty\f\to G/\ker_\infty\f$ is injective. Hence
 \begin{equation}\label{inj-eq}
Q_n(G/\ker_\infty\f,\overline\f)=P_n(G/\ker_\infty\f,\overline\f)
\end{equation}
for every $n\in\N$.
Then, in order to show that $Q_n(G,\f)=P_n(G,\f)+\ker_\infty\f$ for every $n\in\N_+$, it suffices to show that for every $n\in\N$:
\begin{itemize}%\label{piPn=Pn}
\item[(a)] $\pi(P_n(G,\f))=P_n(G/\ker_\infty\f,\overline\f)$, and
\item[(b)] $\pi(Q_n(G,\f))=Q_n(G/\ker_\infty\f,\overline\f)$.
\end{itemize}
(a) Clearly $\pi(P_n(G,\f))\subseteq P_n(G/\ker_\infty\f,\overline\f)$ for every $n\in\N$.
So we prove by induction the converse inclusion. The case $n=0$ is trivial.
Assume now that $n\in\N$ and that $\pi(P_{n}(G,\f))=P_{n}(G/\ker_\infty\f,\overline\f)$.
Let $x\in G$ be such that $\pi(x)\in P_{n+1}(G/\ker_\infty\f,\overline\f)$. Then there exists $s\in\N_+$ such that $\overline\f^s(\pi(x))-\pi(x)\in P_{n}(G/\ker_\infty\f,\overline\f)$. Since $\pi(\f^s(x)-x)=\overline\f^s(\pi(x))-\pi(x)\in P_{n}(G/\ker_\infty\f,\overline\f)$, we conclude that
\begin{equation}\label{fs}
\f^s(x)-x\in P_{n}(G,\f)+\ker_\infty\f.
\end{equation}
Hence there exists $k\in\N_+$ such that $\f^k(\f^s(x)-x)\in P_{n}(G,\f)$ and so $\f^s(\f^k(x))-\f^k(x)\in P_{n}(G,\f)$, that is, $\f^k(x)\in P_{n+1}(G,\f)$.  By Claim \ref{claim} applied to \eqref{fs}, and since $P_{n+1}(G,\f)$ is $\phi$-invariant,
$$\f^{sk}(x)-x\in P_{n}(G,\f)+\ker_\infty\f\ \text{and} \ \f^{sk}(x)\in P_{n+1}(G,\f).$$
Hence $x\in\ker_\infty\phi+P_{n+1}(G,\phi)$, and consequently $\pi(x)\in\pi(P_{n+1}(G,\f))$.

\smallskip
(b) Let $n\in\N$. Clearly $\pi(P_n(G,\f))\subseteq\pi(Q_n(G,\f))\subseteq Q_n(G/\ker_\infty\f,\overline\f)$. Moreover,  $Q_n(G/\ker_\infty\f,\overline\f)=P_n(G/\ker_\infty\f,\overline\f)=\pi(P_n(G,\f))$, by \eqref{inj-eq} and (a). Hence all these subgroups coincide, and in particular 
$\pi(Q_n(G,\f))=Q_n(G/\ker_\infty\f,\overline\f)$.
\end{proof}

It is easy to see that if $G$ is an abelian group and $\f\in\Aut(G)$, then 
 $\f(Q_n(G,\f))= Q_n(G,\f)$ and $Q_n(G,\f^{-1})=Q_n(G,\f)$ for every $n\in\N$. Consequently, 
 $\f(\QQ(G,\f))=\QQ(G,\f)$ and $\mathfrak Q(G,\f^{-1})=\QQ(G,\f)$. 
 
\begin{proposition}\label{NewProp}
Let $(G,\phi)$ be an algebraic flow.  Then
%\item[(b)]  
$Q_n(G,\f)=\phi^{-1}(Q_n(G,\f))$ (i.e., the induced endomorphism $\overline\f_n:G/ Q_n(G,\f)\to G/ Q_n(G,\f)$ is injective) for every $n\in\N$. Hence, $\QQ(G,\f)=\phi^{-1}(\QQ(G,\f))$ (i.e., the induced endomorphism  $G/\QQ(G,\f) \to G/ \QQ(G,\f)$ 
 is injective). 
%\item[(b$'$)] the induced endomorphism $\overline\f:G/\QQ(G,\f) \to G/ \QQ(G,\f)$ is an automorphism.
%\end{itemize}
\end{proposition}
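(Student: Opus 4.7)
The plan is to induct on $n\geq 1$, using the recursive identity $Q_{n+1}(G,\f)/Q_n(G,\f)=Q_1(G/Q_n(G,\f),\overline\f_n)$ supplied by \eqref{Q-eq} to pass from one level to the next. Since $Q_n(G,\f)\subseteq\f^{-1}(Q_n(G,\f))$ is just the $\f$-invariance already recorded in Proposition \ref{PnQn}(b), only the converse inclusion needs argument.

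First I would dispatch the base case $n=1$: if $\f(x)\in Q_1(G,\f)$, then there exist $s>t$ in $\N$ with $\f^s(\f(x))=\f^t(\f(x))$, which rewrites as $\f^{s+1}(x)=\f^{t+1}(x)$, placing $x$ in $Q_1(G,\f)$. For the inductive step I would assume the identity at level $n$, so that $\overline\f_n$ on $G/Q_n(G,\f)$ is injective, and then apply the base case verbatim to the algebraic flow $(G/Q_n(G,\f),\overline\f_n)$; this yields $\overline\f_n^{-1}(Q_1(G/Q_n(G,\f),\overline\f_n))=Q_1(G/Q_n(G,\f),\overline\f_n)$. Now if $\f(x)\in Q_{n+1}(G,\f)$, then $\overline\f_n(\pi_n(x))=\pi_n(\f(x))$ lies in $Q_{n+1}(G,\f)/Q_n(G,\f)=Q_1(G/Q_n(G,\f),\overline\f_n)$ by \eqref{Q-eq}, whence $\pi_n(x)\in Q_1(G/Q_n(G,\f),\overline\f_n)$ and thus $x\in Q_{n+1}(G,\f)$. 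The corresponding statement for $\QQ(G,\f)$ then falls out in one line, since preimages commute with unions: $\f^{-1}(\QQ(G,\f))=\bigcup_{n\in\N}\f^{-1}(Q_n(G,\f))=\bigcup_{n\in\N}Q_n(G,\f)=\QQ(G,\f)$.

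I do not expect a serious obstacle here: the induction is clean and essentially all the content is concentrated in the trivial case $n=1$. The only real decision is to use the recursive identity \eqref{Q-eq} in the inductive step rather than unpacking the raw definition of $Q_{n+1}(G,\f)$; working directly from the definition would force one to promote $\f^{s+1}(x)-\f^{t+1}(x)\in Q_n(G,\f)$ back to membership in $Q_{n+1}(G,\f)$ via Claim \ref{claim}, which is doable but much less transparent than the quotient-based induction above.
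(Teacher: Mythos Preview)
Your proof is correct and follows exactly the route the paper sketches: verify $Q_1(G,\f)=\f^{-1}(Q_1(G,\f))$ from the definition, then pass to higher $n$ via \eqref{Q-eq}. Two minor remarks: first, you never actually use the inductive hypothesis---the step from $n$ to $n+1$ relies only on the base case applied to the quotient flow $(G/Q_n(G,\f),\overline\f_n)$, so the argument is really a direct reduction rather than a genuine induction; second, your closing comment overstates the difficulty of the raw-definition approach, since $(\f^{s+1}-\f^{t+1})(x)\in Q_n(G,\f)$ already witnesses $x\in Q_{n+1}(G,\f)$ by definition, with no appeal to Claim~\ref{claim} needed.
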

\begin{proof} 
The equality $Q_1(G,\f)=\phi^{-1}(Q_1(G,\f))$ easily follows from the definitions. 
Then an inductive argument using (\ref{Q-eq}) applies. 
\end{proof}

From Proposition \ref{NewProp} one easily obtains: 

\begin{corollary}\label{NewCoro} 
If $(G,\phi)$ is an algebraic flow with surjective $\phi$, then the induced endomorphisms $G/\QQ(G,\f) \to G/ \QQ(G,\f)$ and $G/ Q_n(G,\f)\to G/ Q_n(G,\f)$ ($n\in\N$) are automorphisms. 
\end{corollary}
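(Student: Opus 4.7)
The plan is to combine two ingredients: injectivity comes directly from Proposition \ref{NewProp}, while surjectivity is a general observation about induced endomorphisms on quotients of surjective maps.

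First I would recall what Proposition \ref{NewProp} gives: for every $n \in \N$, $Q_n(G,\f) = \phi^{-1}(Q_n(G,\f))$, and analogously $\QQ(G,\f) = \phi^{-1}(\QQ(G,\f))$. These equalities precisely encode the fact that $\overline\f_n$ on $G/Q_n(G,\f)$ (and similarly the induced map on $G/\QQ(G,\f)$) is injective. So injectivity of both induced endomorphisms is already in hand.

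For surjectivity, I would argue in complete generality: if $H$ is any $\f$-invariant subgroup of $G$ and $\f$ is surjective, then the induced endomorphism $\overline\f : G/H \to G/H$ is surjective. Indeed, given $y + H \in G/H$, pick $x \in G$ with $\f(x) = y$ (using surjectivity of $\f$); then $\overline\f(x+H) = \f(x) + H = y + H$. Applying this with $H = Q_n(G,\f)$ and $H = \QQ(G,\f)$ (both $\f$-invariant by Proposition \ref{PnQn}) yields surjectivity of the corresponding induced endomorphisms.

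Combining the two, each of the induced maps is both injective and surjective, hence an automorphism. There is essentially no obstacle here; the content is packaged into Proposition \ref{NewProp}, and the corollary is a one-line deduction once the trivial surjectivity observation is made explicit.
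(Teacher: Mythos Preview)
Your proof is correct and matches the paper's approach exactly: the paper simply states that the corollary follows easily from Proposition \ref{NewProp}, and you have spelled out precisely the intended argument (injectivity from Proposition \ref{NewProp}, surjectivity from the trivial general fact about quotients of surjective maps).
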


\begin{example}\label{Z^N}
\begin{itemize}
\item[(a)] Let $G=\Z^2$ and let $\f$ be the automorphism of $G$ defined by $\f(x,y)=(x,x+y)$, that is, by the matrix $\begin{pmatrix}1 & 1 \\ 0 & 1\end{pmatrix}$. The subgroup $H=\Z\times\{0\}$ is $\f$-invariant.
% By Theorem \ref{Yuz} $h(\phi)=0$ and so $h(\f\restriction_H)=h(\overline\f)=0$ by Fact \ref{conjugation_by_iso}(a), where $\overline\phi:G/H\to G/H$ is the endomorphism induced by $\phi$. On the other hand, 
Moreover, $Q_1(G,\f)=H$, while $\QQ(G,\f)= Q_2(G,\f)=G$. Consequently, 
$$0=Q_0(G,\phi)\subset Q_1(G, \f) \subset Q_2(G, \f)=\QQ(G,\phi)=G.$$ 
\end{itemize}
With similar examples one can show that the Loewy length of $\QQ(G,\f)$ may be arbitrarily large (up to $\omega$):
\begin{itemize}
\item[(b)] Let $G=\Z^{(\N)}=\bigoplus_{n=1}^\infty\hull{e_n}$. Let $\phi$ be the automorphism of $G$ given by the matrix 
$$\begin{pmatrix} 
1 & 1 & 1 & \ldots \\
0 & 1 & 1 & \ldots \\
0 & 0 & 1 & \ldots \\
\vdots & \vdots & \ddots & \ddots
\end{pmatrix}.$$
For every $n\in\N_+$ let $G_n=\hull{e_1,\ldots,e_n}$. Then $G_n=P_n(G,\phi)=Q_n(G,\phi)$ and $\QQ(G,\f)=G$. In particular, we have the following strictly increasing chain
$$0=Q_0(G,\phi)\subset Q_1(G,\phi)\subset Q_2(G,\phi)\subset\ldots\subset Q_n(G,\phi)\subset\ldots\subset\QQ(G,\phi)=G.$$
\end{itemize}
\end{example}

The next lemma is clear.

\begin{lemma}\label{Q_H}
Let $(G,\phi)$ be an algebraic flow and $H$ a $\phi$-invariant subgroup of $G$. Then:
\begin{itemize}
\item[(a)] $Q_n(H,\phi\restriction_H)=Q_n(G,\phi)\cap H$ for every $n\in\N$; so
\item[(b)]$\QQ(H,\phi\restriction_H)=\QQ(G,\phi)\cap H$.
\end{itemize}
\end{lemma}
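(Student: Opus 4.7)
The plan is to prove (a) by induction on $n$, with (b) following immediately by taking unions. The base case $n=0$ is trivial since $Q_0(H,\phi\restriction_H)=0=Q_0(G,\phi)\cap H$ by definition.

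For the inductive step, assume $Q_n(H,\phi\restriction_H)=Q_n(G,\phi)\cap H$ for some $n\in\N$, and let $x\in H$. By the recursive definition, $x\in Q_{n+1}(H,\phi\restriction_H)$ if and only if there exist $k>l$ in $\N$ with $((\phi\restriction_H)^k-(\phi\restriction_H)^l)(x)\in Q_n(H,\phi\restriction_H)$, while $x\in Q_{n+1}(G,\phi)\cap H$ if and only if there exist $k>l$ in $\N$ with $(\phi^k-\phi^l)(x)\in Q_n(G,\phi)$. The key point is that since $H$ is $\phi$-invariant, $(\phi^k-\phi^l)(x)=(\phi\restriction_H)^k(x)-(\phi\restriction_H)^l(x)\in H$, so the membership condition $(\phi^k-\phi^l)(x)\in Q_n(G,\phi)$ is equivalent to $(\phi^k-\phi^l)(x)\in Q_n(G,\phi)\cap H$, which by the inductive hypothesis equals $Q_n(H,\phi\restriction_H)$. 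This establishes (a).

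For (b), since the chain $\{Q_n(G,\phi)\}_{n\in\N}$ is increasing (and similarly for $H$), we have
\begin{equation*}
\QQ(H,\phi\restriction_H)=\bigcup_{n\in\N}Q_n(H,\phi\restriction_H)=\bigcup_{n\in\N}(Q_n(G,\phi)\cap H)=\Bigl(\bigcup_{n\in\N}Q_n(G,\phi)\Bigr)\cap H=\QQ(G,\phi)\cap H,
\end{equation*}
using (a) in the second equality and the distributivity of intersection over directed unions in the third.

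There is no real obstacle here: the statement is labelled ``clear'' in the paper and the only thing worth checking is that $\phi$-invariance of $H$ makes the two recursive conditions genuinely equivalent (rather than one merely implying the other). Once the inductive step is written out, the proof is essentially formal.
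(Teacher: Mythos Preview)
Your proof is correct and is exactly the straightforward induction the paper has in mind; the paper itself gives no argument, simply declaring the lemma ``clear.'' Your write-up correctly identifies the one point that needs checking---that $\phi$-invariance of $H$ forces $(\phi^k-\phi^l)(x)\in H$, so that intersecting with $H$ costs nothing---and the deduction of (b) from (a) is routine.
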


%%%%%%%%%%%%%%%%%%%%

The following notion will be motivated and used in Section \ref{top-sec}.

\begin{deff}\label{alg-erg}
Let $(G,\phi)$ be an algebraic flow. Call $\f$ \emph{algebraically ergodic} if $\mathfrak Q(G,\phi)=0$ (that is, $\phi$ has no non-trivial quasi-periodic point). 
\end{deff}

Observe that for an abelian group $G$, the endomorphism $0_G$ is algebraically ergodic if and only if $G=0$.

\smallskip
The next proposition shows that, for $(G,\phi)$ an algebraic flow, $\QQ(G,\phi)$ is the smallest $\phi$-invariant subgroup $H$ of $G$ such that the induced endomorphism $\overline\phi:G/H\to G/H$ is algebraically ergodic.

\begin{proposition}\label{P.T}
Let $(G,\phi)$ be an algebraic flow. Then:
\begin{itemize}
\item[(a)] the endomorphism $\overline{\f}: G/\QQ(G,\f)\to G/\QQ(G,\f)$ induced by $\phi$ is algebraically ergodic, i.e., 
$$
\QQ(G/\QQ(G,\f),\overline\f)=0;
$$ 
% that is $\overline{\f}\in  \mathcal E$;
\item[(b)] if for some $\f$-invariant subgroup $H$ of $G$ the endomorphism $\overline{\f}: G/H\to G/H$ induced by $\phi$ is algebraically ergodic, then 
$H\supseteq \QQ(G,\f)$.
%\item[(c)]if $\f$ is an automorphism, then $h(\f\restriction_{P_\infty(\f)})=0$.
\end{itemize}
\end{proposition}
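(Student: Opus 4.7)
My plan is to prove both items by unpacking the recursive definition of the chain $Q_n(G,\phi)$ and using the key observation (Lemma \ref{Q=0<->Q1=0}) that algebraic ergodicity is equivalent to the vanishing of $Q_1$ alone. Let me write $N = \mathfrak{Q}(G,\phi)$ and $\pi:G\to G/N$ the canonical projection throughout.

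For item (a), by Lemma \ref{Q=0<->Q1=0} it suffices to show that $Q_1(G/N,\overline\phi)=0$. Take $y\in Q_1(G/N,\overline\phi)$ and lift it to some $x\in G$ with $\pi(x)=y$. By definition of $Q_1$ there are $n>m$ in $\mathbb{N}$ with $(\overline\phi^n-\overline\phi^m)(y)=0$, i.e.\ $\phi^n(x)-\phi^m(x)\in N=\bigcup_{k\in\mathbb{N}}Q_k(G,\phi)$. Hence $\phi^n(x)-\phi^m(x)\in Q_k(G,\phi)$ for some $k\in\mathbb{N}$, and the very definition of $Q_{k+1}(G,\phi)$ forces $x\in Q_{k+1}(G,\phi)\subseteq N$. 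Therefore $y=\pi(x)=0$, which proves (a).

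For item (b), let $H$ be a $\phi$-invariant subgroup such that $\overline\phi:G/H\to G/H$ is algebraically ergodic, i.e.\ $\mathfrak{Q}(G/H,\overline\phi)=0$, which by Lemma \ref{Q=0<->Q1=0} means $Q_1(G/H,\overline\phi)=0$. I claim, by induction on $n$, that $Q_n(G,\phi)\subseteq H$ for every $n\in\mathbb{N}$. The case $n=0$ is trivial since $Q_0(G,\phi)=0$. Assuming $Q_n(G,\phi)\subseteq H$, take $x\in Q_{n+1}(G,\phi)$: there are $s>t$ in $\mathbb{N}$ with $(\phi^s-\phi^t)(x)\in Q_n(G,\phi)\subseteq H$. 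Applying the projection $\pi_H:G\to G/H$ yields $(\overline\phi^s-\overline\phi^t)(\pi_H(x))=0$ in $G/H$, so $\pi_H(x)\in Q_1(G/H,\overline\phi)=0$, which gives $x\in H$. This closes the induction, and taking the union over $n$ yields $\mathfrak{Q}(G,\phi)\subseteq H$.

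No step is really an obstacle here: both parts rely only on the recursive clause defining $Q_{n+1}$ in terms of $Q_n$, combined with the transfer to quotients provided by \eqref{Q-eq} of Proposition \ref{PnQn} and the equivalence in Lemma \ref{Q=0<->Q1=0}. The one thing to be careful about is that in (a) we do \emph{not} need to inductively chase each $Q_n$ of the quotient, precisely because Lemma \ref{Q=0<->Q1=0} lets us reduce algebraic ergodicity to the single condition $Q_1=0$; this is why the increasing union structure of $\mathfrak{Q}(G,\phi)$ suffices to absorb the witness $\phi^n(x)-\phi^m(x)$ at one fixed level $Q_k$.
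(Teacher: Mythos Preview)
Your proof is correct and follows essentially the same approach as the paper's: both parts hinge on the recursive definition of $Q_{n+1}$ together with Lemma \ref{Q=0<->Q1=0}, and your induction in (b) is exactly the paper's. The only cosmetic difference is in (a): the paper first invokes Proposition \ref{NewProp} (injectivity of $\overline\phi$) to reduce from quasi-periodic to periodic points of $\overline\phi$, whereas you work directly with a quasi-periodic point, which is slightly more streamlined but amounts to the same argument.
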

\begin{proof}
(a) According to Proposition \ref{NewProp} it suffices to check that $\overline\phi$ has no non-zero periodic points. 
Let $\pi:G\to G/\QQ(G,\f)$ be the canonical projection, and assume that $\pi(x)\in G/\QQ(G,\f)$ is a periodic point of $\overline{\phi}$ for some $x\in G$. Then there exists $n\in \N_+$ such that $\phi^n(x)-x\in \QQ(G,\f)$. Consequently $\phi^n(x)-x\in  Q_s(G,\phi)$ for some $s\in\N$. This yields $x\in  Q_{s+1}(G,\phi)\subseteq\QQ(G,\f)$. Thus $\pi(x)=0$ in $G/\QQ(G,\f)$. Hence $Q_1(G/\QQ(G,\f),\overline\f)=0$. By Lemma \ref{Q=0<->Q1=0}, $\QQ(G/\QQ(G,\f),\overline\f)=0$.

\smallskip
(b) It suffices to see that $Q_n(G,\phi) \subseteq H$ for each $n\in\N$. We shall prove it by induction on $n\in\N$, the case $n=0$ being trivial. If $n\in\N$ and $Q_{n}(G,\phi) \subseteq H$, then for $x\in  Q_{n+1}(G,\phi)$ and $\pi:G\to G/H$ the canonical projection, $\pi(x)\in G/H$ is a quasi-periodic point of the induced endomorphism $\overline{\phi}: G/H\to G/H$, as $Q_{n+1}(G,\phi)/Q_n(G,\phi)=Q_1(G/Q_n(G,\phi),\overline\phi_n)$ by \eqref{Q-eq}, where $\overline\phi_n:G/Q_n(G,\phi)\to G/Q_n(G,\phi)$ is the induced endomorphism. So our hypothesis yields $\pi(x)=0$, that is, $x\in H$. 
\end{proof}

%%%%%%%%%%%%%%%%%%%%%%%%%%%%%%%%%%%%%%%%%%%%%
%  ----------       The polynomial growth       ------------------------------------------------------------
%%%%%%%%%%%%%%%%%%%%%%%%%%%%%%%%%%%%%%%%%%%%%

\section{The polynomial growth}\label{PGP-sec}

\begin{example}\label{lex}
Let $G$ be an abelian group.
\begin{itemize}
\item[(a)] 
Then $id_G\in\pgp$ by Claim \ref{claim0} and it is obvious that $0_G\in\pgp$.
\item[(b)] Moreover, $\phi\restriction_{\ker_\infty\phi}\in\pgp$.
Indeed, for every $F\in[\ker_\infty\phi]^{<\omega}$, there exists $m\in\N_+$ such that $\phi^m(F)=0$. Then $\tau_F(n)=\tau_F(m)$ for every $n\in\N$ with $n\geq m$. In particular, $\phi\in\pgp_F$.
\end{itemize}
\end{example}

The following obvious claim underlines the fact   that the property of having polynomial growth is in some sense ``local''.

\begin{claim}\label{pgp-local}
Let $(G,\phi)$ be an algebraic flow and $F\in[G]^{<\omega}$. The following conditions are equivalent:
\begin{itemize}
\item[(a)] $\f\in\pgp_F$;
\item[(b)] $\f\restriction_{V(\phi,F)}\in\pgp_F$;
\item[(c)] $\f\restriction_H\in\pgp_F$ for every $\phi$-invariant subgroup $H$ of $G$ such that $F\subseteq H$.
\end{itemize}
\end{claim}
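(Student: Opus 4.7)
The plan is to observe a single fact: for any $\phi$-invariant subgroup $H$ of $G$ containing $F$, one has the literal equality of subsets
$$
T_n(\phi,F) \;=\; F+\phi(F)+\ldots+\phi^{n-1}(F) \;=\; T_n(\phi\restriction_H,F)
$$
for every $n\in\N_+$. Indeed, since $F\subseteq H$ and $H$ is closed under both $\phi$ and addition, all the elements assembled by the definition of the trajectory stay inside $H$, and the computation is carried out identically whether we regard $\phi$ as an endomorphism of $G$ or as an endomorphism of $H$. Consequently
$$
\tau_{\phi,F}(n) \;=\; \tau_{\phi\restriction_H,F}(n) \qquad \text{for every } n\in\N_+.
$$

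From this single observation the three equivalences are immediate. Since $V(\phi,F)$ is by definition a $\phi$-invariant subgroup of $G$ containing $F$, the equality above with $H=V(\phi,F)$ gives $\tau_{\phi,F}(n)=\tau_{\phi\restriction_{V(\phi,F)},F}(n)$, so (a) and (b) bound each other by the same polynomial and are equivalent. The equivalence of (a) and (c) follows in the same way by letting $H$ range over arbitrary $\phi$-invariant subgroups containing $F$; note also that (c) trivially implies (b) because $V(\phi,F)$ is one such $H$, and (b) trivially implies (c) because $V(\phi,F)\subseteq H$ combined with the displayed equality gives the same bound on any such $H$.

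There is really no obstacle: the claim is a direct tautology on the definition of $T_n(\phi,F)$, formalising the idea that the trajectory of $F$ never leaves $V(\phi,F)$ and that the cardinalities $\tau_{\phi,F}(n)$ are therefore insensitive to the ambient group. This is precisely why the property of polynomial growth with respect to a fixed finite set $F$ deserves to be called a \emph{local} property, as announced in the preamble to the claim.
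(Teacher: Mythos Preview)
Your proof is correct. The paper itself omits the proof entirely, calling this an ``obvious claim'', and your argument is precisely the natural unpacking of that obviousness: the equality $T_n(\phi,F)=T_n(\phi\restriction_H,F)$ for any $\phi$-invariant $H\supseteq F$ makes $\tau_{\phi,F}$ independent of the ambient group, so the three conditions impose identical polynomial bounds.
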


Using the argument from the proof of Lemma \ref{proexp}, one can prove that 
the above equivalent conditions imply the stronger one $\f\restriction_{V(\phi,F)}\in\pgp$. We are not giving this proof, since this stronger property can be deduced from Lemma \ref{proexp} and Theorem \ref{exp}. 

The next proposition gives basic properties of endomorphisms with polynomial growth. These are analogous to the properties considered for the algebraic entropy.

\begin{proposition}\label{properties}
Let $(G,\phi)$ be an algebraic flow.
\begin{itemize}
\item[(a)] Let $H$ be a $\phi$-invariant subgroup of $G$ and $\overline\phi:G/H\to G/H$ the endomorphism induced by $\phi$. If $\phi\in\pgp$, then $\phi\restriction_H\in\pgp$ and $\overline\phi\in\pgp$. 
\item[(b)] Let $(H,\eta)$ be another algebraic flow. If $\phi$ and $\eta$ are conjugated, (i.e., there exists an isomorphism $\xi:G\to H$ such that $\phi=\xi^{-1}\eta\xi$), then $\f\in\pgp$ if and only if $\eta\in\pgp$. More precisely, if $F'\in[H]^{<\omega}$, then $\eta\in\pgp_{F'}$ if and only if $\f\in\pgp_{\xi^{-1}(F')}$ (if $F\in[G]^{<\omega}$, then $\f\in\pgp_F$ if and only if $\eta\in\pgp_{\xi(F)}$).
\item[(c)] If $G=G_1\times G_2$ and $\phi_i\in\End(G_i)$, for $i=1,2$, then $\phi_1\times\phi_2\in\pgp$ if and only if $\phi_1\in\pgp$ and $\phi_2\in\pgp$.
\item[(d)] Let $G$ be a direct limit of $\f$-invariant subgroups $\{G_i:i\in I\}$. If $\f\restriction_{G_i}\in\pgp$ for every $i\in I$, then $\f\in\pgp$.
\item[(e)] If $G$ is torsion-free, then $\f\in\pgp$ if and only if $\widetilde\f\in\pgp$, where $D(G)$ is the divisible hull of $G$ and $\widetilde\f:D(G)\to D(G)$ is the unique extension of $\phi$ to $D(G)$.
\end{itemize}
\end{proposition}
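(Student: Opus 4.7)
All five parts reduce to tracking how the trajectory sets $T_n(\phi,F)$ behave under the constructions involved. The common strategy is to identify, in each case, the transformation formula for trajectories and then read off the polynomial bound; the arguments parallel those used for the algebraic entropy, but we work directly with $\tau_{\phi,F}$ rather than with $H(\phi,F)$.

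For (a), the subgroup case is immediate from $T_n(\phi\restriction_H,F)=T_n(\phi,F)$ for $F\in[H]^{<\omega}$. For the quotient case, lift an arbitrary $\overline F\in[G/H]^{<\omega}$ to some $F\in[G]^{<\omega}$ with $\pi(F)=\overline F$; since $T_n(\overline\phi,\overline F)=\pi(T_n(\phi,F))$, we get $\tau_{\overline\phi,\overline F}(n)\leq\tau_{\phi,F}(n)\leq P_F(n)$. For (b), the isomorphism $\xi$ carries $T_n(\phi,F)$ bijectively onto $T_n(\eta,\xi(F))$, so $\tau_{\phi,F}(n)=\tau_{\eta,\xi(F)}(n)$, which yields both refined equivalences.

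For (c), the crux is the identity
\[
T_n(\phi_1\times\phi_2,\,F_1\times F_2)=T_n(\phi_1,F_1)\times T_n(\phi_2,F_2),
\]
verified by direct expansion. For the forward direction, given $F\in[G_1\times G_2]^{<\omega}$ let $F_i=\pi_i(F)$, so $F\subseteq F_1\times F_2$ and $\tau_{\phi_1\times\phi_2,F}(n)\leq \tau_{\phi_1,F_1}(n)\cdot\tau_{\phi_2,F_2}(n)$, a product of two polynomials. For the reverse direction, the quotient $(G_1\times G_2)/(\{0\}\times G_2)$ is isomorphic (via the canonical projection) to $G_1$ with induced endomorphism $\phi_1$; by (a) and (b), $\phi_1\in\pgp$, and symmetrically $\phi_2\in\pgp$. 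For (d), directedness of the limit forces any $F\in[G]^{<\omega}$ to lie in some $G_i$, whence $T_n(\phi,F)=T_n(\phi\restriction_{G_i},F)$ and the polynomial bound of $\phi\restriction_{G_i}$ applies verbatim.

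For (e), one direction follows from (a) applied to $G\leq D(G)$. For the converse, the plan is to mimic the proof of Lemma \ref{AA}: given $F\in[D(G)]^{<\omega}$, pick $m\in\N_+$ with $mF\subseteq G$; the multiplication-by-$m$ map $\mu_m$ is an automorphism of the torsion-free divisible group $D(G)$ commuting with $\widetilde\phi$, so $\mu_m$ restricts to a bijection $T_n(\widetilde\phi,F)\to T_n(\phi,mF)$, yielding $\tau_{\widetilde\phi,F}(n)=\tau_{\phi,mF}(n)\leq P_{mF}(n)$. No single step is genuinely difficult; each is a mechanical identity plus a size estimate. The only conceptually nontrivial ingredient is the $\mu_m$-trick in (e), but Lemma \ref{AA} already supplies the template, and directedness of the limit in (d) is the only place where a hidden hypothesis has to be invoked explicitly.
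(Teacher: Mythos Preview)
Your proof is correct and follows essentially the same approach as the paper's: each part is handled by the same trajectory identity (restriction, surjective image under $\pi$, conjugation by $\xi$, product decomposition, directedness, and the $\mu_m$-trick), with the same size estimates. The only cosmetic difference is that in (c) your ``forward'' and ``reverse'' labels are swapped relative to the usual reading of the biconditional, but the two arguments themselves match the paper's exactly.
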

\begin{proof}
(a) Let $F\in[H]^{<\omega}$. Then there exists $P_F(x)\in\Z[x]$ such that $|T_n(\f\restriction_H,F)|=|T_n(\f,F)|\leq P_F(n)$ for every $n\in\N_+$. Let now $\pi:G\to G/H$ be the canonical projection and let $F'\in[G/H]^{<\omega}$. Then there exists $F\in[G]^{<\omega}$ such that $|F|=|F'|$ and $\pi(F)=F'$. Then $|T_n(\overline\f,F')|=|\pi(T_n(\f,F))|\leq |T_n(\f,F)|\leq P_F(n)$ for every $n\in\N_+$.

\smallskip
(b) If $F'\in[H]^{<\omega}$ and $n\in\N_+$, then $T_n(\xi\f\xi^{-1},F')=\xi(T_n(\f,\xi^{-1}(F')))$. Equivalently, if $F\in[G]^{<\omega}$ and $n\in\N_+$, then $T_n(\f,F)=\xi^{-1}(T_n(\xi\f\xi^{-1},\xi(F))$.

\smallskip
(c) If $\phi_1\times\phi_2\in\pgp$, then $\phi_1\in\pgp$ and $\phi_2\in\pgp$ by (a). So assume that $\phi_1\in\pgp$ and $\phi_2\in\pgp$. Let $F\in [G_1\times G_2]^{<\omega}$. Then $F$ is contained in some $F_1\times F_2$, where $F_i\in [G_i]^{<\omega}$ for $i=1,2$. By definition, for $i=1,2$, there exists $P_{F_i}(x)\in\Z[x]$ such that $|T_n(\f_i,F_i)|\leq P_{F_i}(n)$ for every $n\in\N_+$. 
%Let $P_F(x)=P_{F_1}(x)P_{F_2}(x)$. 
Hence $|T_n(\phi_1\times\phi_2,F)|\leq|T_n(\phi_1\times\phi_2,F_1\times F_2)|\leq|T_n(\phi_1,F_1)|\cdot|T_n(\phi_2,F_2)|\leq P_{F_1}(n)P_{F_2}(n)$ for every $n\in\N_+$, and $P_{F_1}(x)P_{F_2}(x)\in\Z[x]$.

\smallskip
(d) Let $F\in[G]^{<\omega}$. Since $G=\bigcup_{i\in I}G_i$ and the family $\{G_i:i\in I\}$ is directed, $F\subseteq G_j$ for some $j\in I$. By hypothesis $\f\restriction_{G_j}\in\pgp_F$. By Claim \ref{pgp-local} $\f\in\pgp_F$. By the arbitrariness of $F$ this proves that $\f\in\pgp$.

\smallskip
(e) Let $F\in[D(G)]^{<\omega}$. Then there exists $m\in\N_+$ such that $mF\subseteq G$. Let $\mu_m(x) = mx$ for every $x\in D(G)$. Then $\mu_m$ is an automorphism of $D(G)$ that commutes with $\widetilde \f$. Moreover, $T_n(\f, mF)=T_n(\f,\mu_m(F))=\mu_m(T_n(\widetilde \f,F))$. In particular, $\tau_{\widetilde\phi,F}=\tau_{\f, mF}$. Hence $\widetilde\f\in\pgp_F$, as $\phi\in\pgp_{mF}$.
\end{proof}

The following property is related to powers of endomorphisms.

\begin{lemma}\label{pgp-lemma}
Let $(G,\phi)$ be an algebraic flow, let $k\in\N_+$ and $F\in[G]^{<\omega}$.
\begin{itemize}
\item[(a)] If $\f\in\pgp_F$, then $\f^k\in\pgp_F$.
\item[(b)] If $\f^k\in\pgp_{T_k(\f,F)}$, then $\f\in\pgp_F$.
\end{itemize}
In particular, $\f\in\pgp$ if and only if $\f^k\in\pgp$.
\end{lemma}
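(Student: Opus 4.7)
The plan is to exploit Claim~\ref{Tf-Tf^k}, which already provides the key comparisons between trajectory sizes for $\phi$ and $\phi^k$; the two parts of the lemma will then follow from the two parts of that claim, in the same order.

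For (a), I would invoke Claim~\ref{Tf-Tf^k}(a) directly. If $\phi\in\pgp_F$ is witnessed by some $P_F(x)\in\Z[x]$ with $\tau_{\phi,F}(m)\leq P_F(m)$ for every $m\in\N_+$, then
$$
\tau_{\phi^k,F}(n)\leq \tau_{\phi,F}(nk-n+1)\leq P_F(nk-n+1)\qquad\text{for every }n\in\N_+,
$$
and the right-hand side is a polynomial in $n$ with integer coefficients, giving $\phi^k\in\pgp_F$.

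For (b), Claim~\ref{Tf-Tf^k}(b) gives $\tau_{\phi,F}(nk)=\tau_{\phi^k,T_k(\phi,F)}(n)$; by the hypothesis $\phi^k\in\pgp_{T_k(\phi,F)}$, this is bounded by some $Q(n)\in\Z[x]$. This controls $\tau_{\phi,F}$ only at multiples of $k$, and the remaining step is to bridge to arbitrary arguments. For this I would record the submultiplicativity $\tau_{\phi,F}(a+b)\leq \tau_{\phi,F}(a)\cdot\tau_{\phi,F}(b)$, which follows at once from the identity $T_{a+b}(\phi,F)=T_a(\phi,F)+\phi^a(T_b(\phi,F))$. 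Writing an arbitrary $m\in\N_+$ as $m=nk+r$ with $0\leq r<k$, and setting $C=\max_{1\leq r<k}\tau_{\phi,F}(r)$ (handling the finitely many $m<k$ separately), submultiplicativity yields $\tau_{\phi,F}(m)\leq C\cdot Q(n)$ for $m\geq k$ and $\tau_{\phi,F}(m)\leq C$ for $m<k$. Since $n\leq m$, both bounds are polynomial in $m$ with integer coefficients, so $\phi\in\pgp_F$.

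The final ``in particular'' clause is then immediate by quantifying over $F\in[G]^{<\omega}$. The direction $\phi\in\pgp\Rightarrow\phi^k\in\pgp$ follows at once from (a); conversely, if $\phi^k\in\pgp$, then for any $F\in[G]^{<\omega}$ the finite set $T_k(\phi,F)$ is a legitimate test set for $\phi^k$, so $\phi^k\in\pgp_{T_k(\phi,F)}$ by hypothesis, and (b) yields $\phi\in\pgp_F$. I do not foresee any serious obstacle: Claim~\ref{Tf-Tf^k} does essentially all the work, and the only mild subtlety is the one-line submultiplicativity observation needed in (b) to pass from multiples of $k$ to arbitrary $m$.
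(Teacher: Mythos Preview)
Your proof is correct, and for part (a) it coincides with the paper's. For part (b) the paper takes a slightly shorter route: it first reduces to the case $0\in F$ (harmless, since replacing $F$ by the translate $F-f_0$ for any $f_0\in F$ leaves all the functions $\tau_{\phi,F}$ and $\tau_{\phi^k,T_k(\phi,F)}$ unchanged), and then uses the resulting monotonicity of $n\mapsto\tau_{\phi,F}(n)$ together with Claim~\ref{Tf-Tf^k}(b) to obtain in one stroke
\[
\tau_{\phi,F}(n)\leq\tau_{\phi,F}(nk)=\tau_{\phi^k,T_k(\phi,F)}(n)\leq P(n),
\]
so the very same polynomial works. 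Your submultiplicativity argument reaches the same conclusion without that preliminary normalization, at the price of a small case split on the residue of $m$ modulo $k$ and an extra constant factor. Both approaches are valid; the paper's device of assuming $0\in F$ to force monotonicity of the trajectory sizes is worth noting, as it streamlines several arguments of this type.
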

\begin{proof}
Let $n\in\N_+$. We can suppose without loss of generality that $0\in F$ (so that the cardinality of the trajectories grows with $n$). 

\smallskip
(a) Since $\f\in\pgp_F$, there exists $P_F(x)\in\Z[x]$ such that $\tau_{\phi,F}(n)\leq P_F(n)$ for every $n\in\N_+$. By Claim \ref{Tf-Tf^k}(a), for every $n\in\N_+$, $$\tau_{\phi^k,F}(n)\leq\tau_{\f,F}(kn-n+1)\leq P_F((k-1)n+1).$$ This shows that $\f^k\in\pgp_F$.

\smallskip
(b) Since $\f^k\in\pgp_{T_k(\f,F)}$, there exists a polynomial $P(x)\in\Z[x]$,  depending only on $F$ and the fixed $k$, 
such that $\tau_{\phi^k,T_k(\phi,F)}(n)\leq P(n)$ for every $n\in\N_+$. By Claim \ref{Tf-Tf^k}(b), for every $n\in\N_+$, 
$$
\tau_{\phi,F}(n)\leq\tau_{\f,F}(nk)=\tau_{\f^k,T_k(\f,F)}(n)\leq P(n).
$$ 
 This proves that $\f\in\pgp_F$.

\smallskip
The last assertion follows directly from (a) and (b).
\end{proof}

\begin{example}
Let $(G,\phi)$ be an algebraic flow.
If either $\phi^k=id_G$ (i.e., $\phi$ is periodic) or $\phi^k=0_G$ (i.e., $\f$ is nilpotent) for some $k\in\N_+$, then $\phi\in\pgp$.
This follows immediately from Example \ref{lex} and Lemma \ref{pgp-lemma}.
\end{example}

\begin{deff}
For an algebraic flow $(G,\phi)$, let $\pgp(G,\f)$ be the greatest $\phi$-invariant subgroup of $G$ such that $\f\restriction_{\pgp(G,\f)}\in\pgp$.
\end{deff}

The proof of the following lemma is similar to the proof of the existence of the Pinsker subgroup of an algebraic flow given in Proposition \ref{P:existence}.

\begin{lemma}\label{pgp-existence}
Let $(G,\phi)$ be an algebraic flow. Then $\pgp(G,\f)$ exists.
\end{lemma}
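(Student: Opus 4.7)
The plan is to mirror the proof of Proposition \ref{P:existence} on the existence of the Pinsker subgroup, using Proposition \ref{properties} (the polynomial-growth analogue of Fact \ref{conjugation_by_iso}) in place of the entropy facts invoked there. Everything hinges on the observation that polynomial growth, just like vanishing entropy, is stable under the three operations used in the Pinsker argument: passing to invariant subgroups and quotients, conjugation, and finite products.

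First I would introduce the family
$$\mathcal{F} = \{H \leq G : H \text{ is } \phi\text{-invariant and } \phi\restriction_H \in \pgp\}$$
and verify that it is closed under finite sums. Given $H_1, H_2 \in \mathcal{F}$, the product endomorphism $\phi\restriction_{H_1} \times \phi\restriction_{H_2}$ of $H_1 \times H_2$ lies in $\pgp$ by Proposition \ref{properties}(c). The natural $\phi$-equivariant surjection $H_1 \times H_2 \to H_1 + H_2$ exhibits $H_1 + H_2$ as a quotient, so the induced endomorphism is in $\pgp$ by Proposition \ref{properties}(a); since this induced endomorphism is conjugated to $\phi\restriction_{H_1+H_2}$, Proposition \ref{properties}(b) gives $\phi\restriction_{H_1+H_2} \in \pgp$. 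An obvious induction extends this to any finite sum $H_1 + \cdots + H_n \in \mathcal{F}$.

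Finally I would set $P = \langle H : H \in \mathcal{F}\rangle$ and check that $P \in \mathcal{F}$. For $F \in [P]^{<\omega}$, finitely many generators suffice, so $F \subseteq H_1 + \cdots + H_n$ for some $H_1,\ldots,H_n \in \mathcal{F}$. By the previous step $\phi\restriction_{H_1+\cdots+H_n} \in \pgp_F$, and Claim \ref{pgp-local} (which expresses precisely that $\pgp_F$ depends only on $V(\phi,F)$, not on the ambient invariant supergroup) propagates this to $\phi\restriction_P \in \pgp_F$. As $F$ was arbitrary, $\phi\restriction_P \in \pgp$ and hence $P \in \mathcal{F}$; by construction $P$ is the largest element of $\mathcal{F}$, so $P = \pgp(G,\phi)$.

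There is really no obstacle here; the proof is a direct transcription of Proposition \ref{P:existence}. A marginally slicker route even avoids the final appeal to Claim \ref{pgp-local}: once $\mathcal{F}$ is closed under pairwise (hence finite) sums, the collection of subgroups $H_1 + \cdots + H_n$ with $H_i \in \mathcal{F}$ is directed by inclusion and has union $P$, so Proposition \ref{properties}(d) yields $\phi\restriction_P \in \pgp$ in one stroke.
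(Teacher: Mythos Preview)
Your proof is correct and follows essentially the same approach as the paper's own proof: both transcribe the argument of Proposition \ref{P:existence}, replacing the entropy facts by the corresponding parts of Proposition \ref{properties} and invoking Claim \ref{pgp-local} for the final locality step. Your closing remark about using Proposition \ref{properties}(d) instead is a valid shortcut the paper does not take, but it does not change the substance of the argument.
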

\begin{proof}
Let $\mathcal F=\{H\leq G: H\ \text{$\phi$-invariant},\ \phi\restriction_H\in\pgp\}$.

We start proving that
\begin{equation}\label{sum_in_F-}
\text{if $H_1,\ldots,H_n\in\mathcal F$, then $H_1+\ldots+H_n\in\mathcal F$.}
\end{equation}
Let $H_1,H_2\in\mathcal F$. Consider $\xi=\phi\restriction_{H_1}\times\phi\restriction_{H_2}:H_1\times H_2\to H_1\times H_2$. By Proposition \ref{properties}(c) $\xi\in\pgp$. Since $H_1+H_2$ is a quotient of $H_1\times H_2$, consider the quotient endomorphism $\overline{\xi}:H_1\to H_2$ induced by $\xi$. By Proposition \ref{properties}(a) $\overline{\xi}\in\pgp$.
Since $\phi\restriction_{H_1+H_2}$ is conjugated to $\overline{\xi}$, it follows that $\phi\restriction_{H_1+H_2}\in\pgp$ by Proposition \ref{properties}(b). Proceeding by induction it is easy to prove \eqref{sum_in_F-}.

\smallskip
Let $P=\hull{H:H\in\mathcal F}$; it suffices to prove that $\phi\restriction_P\in\pgp$. To this end let $F\in[P]^{<\omega}$.
Then there exist $n\in\N_+$ and $H_1,\ldots,H_n\in\mathcal F$ such that $F\subseteq H_1+\ldots+H_n$. By \eqref{sum_in_F-} $\f\restriction_{H_1+\ldots+H_n}\in\pgp$, so in particular $\f\restriction_P\in\pgp_F$ by Lemma \ref{pgp-local}. Hence $\f\restriction_P\in\pgp$. By definition, if $H$ is a $\f$-invariant subgroup of $G$ with $\f\restriction_H\in\pgp$, then $H\subseteq P$. Hence $P=\pgp(G,\f)$.
\end{proof}

For an algebraic flow $(G,\phi)$, one may ask whether $\pgp(G/\pgp(G,\phi),\overline\phi)=0$, where $\overline\phi:G/\pgp(G,\phi)\to G/\pgp(G,\phi)$ is the endomorphism induced by $\phi$. This property in fact holds true and will follow from results below (see Proposition \ref{P.T} and Theorem \ref{Q=PGP=P}).

\begin{claim}\label{fingen,locper->per}
Let $(G,\phi)$ be an algebraic flow. If $G=V(\f,F)$ for some $F\in[G]^{<\omega}$ and $\f$ is locally periodic, then $\f$ is periodic.
\end{claim}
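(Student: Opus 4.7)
The plan is to exploit the fact that the finite set $F$ has only finitely many elements, each of which is periodic, so a common period $N$ can be taken and then propagated via $\phi$-invariance to all of $V(\phi,F) = G$.

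First I would enumerate $F=\{f_1,\ldots,f_k\}$ and, invoking local periodicity, pick positive integers $n_1,\ldots,n_k$ with $\phi^{n_i}(f_i)=f_i$ for every $i=1,\ldots,k$. Setting $N = \mathrm{lcm}(n_1,\ldots,n_k)$, one immediately gets $\phi^N(f_i)=f_i$ for all $i$, so that $\phi^N$ fixes $F$ pointwise.

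Next I would extend this to the whole trajectory of $F$. For any $j\in\N$ and any $f_i\in F$, the endomorphisms $\phi^N$ and $\phi^j$ commute, so
$$\phi^N(\phi^j(f_i))=\phi^j(\phi^N(f_i))=\phi^j(f_i).$$
Therefore $\phi^N$ fixes every element of $T(\phi,F)=\bigcup_{j\in\N}\phi^j(F)$ pointwise. Since $\phi^N$ is a group endomorphism and $G=V(\phi,F)=\hull{T(\phi,F)}$ is generated (as a subgroup) by the fixed set $T(\phi,F)$, the endomorphism $\phi^N-id_G$ vanishes on a generating set and hence on all of $G$. This yields $\phi^N=id_G$, proving that $\phi$ is periodic.

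There is no substantial obstacle here; the argument is entirely formal. The only point one must keep in mind is that the hypothesis $G=V(\phi,F)$ is essential: it provides the passage from ``$\phi^N$ fixes $F$'' to ``$\phi^N$ fixes $G$'', via the commutation $\phi^N\phi^j=\phi^j\phi^N$ that transports the fixed-point property along the $\phi$-trajectory, and via the fact that an endomorphism fixing a generating set of an abelian group is the identity.
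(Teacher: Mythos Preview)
Your proof is correct and follows exactly the same idea as the paper's: find a common period $m$ for the finitely many elements of $F$, then use $G=V(\phi,F)$ to conclude $\phi^m=id_G$. The paper compresses this into two sentences (``There exists $m\in\N_+$ such that $\phi^m\restriction_F=id_F$. Hence $\phi^m=id_G$.''), while you have spelled out the passage from $F$ to the generating set $\{\phi^j(f_i)\}$ and then to all of $G$. One small notational slip: in this paper $T(\phi,F)=\sum_{n\in\N}\phi^n(F)$ is the sumset, not the union $\bigcup_j\phi^j(F)$; but since $V(\phi,F)=\hull{\phi^n(F):n\in\N}$, your argument goes through unchanged.
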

\begin{proof}
There exists $m\in\N_+$ such that $\f^m\restriction_F=id_F$. Hence $\f^m=id_G$.
\end{proof}

%Let $G$ be an abelian group and $\f\in\End(G)$. For $F\in[G]^{<\omega}$ and $n\in\N_+$ let $$F_{(n)}=\underbrace{F+\ldots+F}_n.$$

%\begin{corollary}
%Let $G$ be an abelian group and $\phi\in\End(G)$. Then $\phi\restriction_{\ker_\infty\phi}\in\pgp$.
%\end{corollary}

The next claim generalizes Claim \ref{claim0}. 

\begin{claim}\label{claim1}
Let $(G,\phi)$ be an algebraic flow and $F\in[G]^{<\omega}$. 
If $(\phi- id_G)^m = 0$ for some $m\in\N_+$, then $\tau_{\f,F}(n)\leq (n^m+1)^{m|F|}$ for every $n\in\N_+$.
\end{claim}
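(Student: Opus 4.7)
My plan is to exploit the nilpotency of $\nu := \phi - id_G$, which by hypothesis satisfies $\nu^m = 0$. Since $\nu$ commutes with $id_G$, for every $k\in\N$ the binomial theorem yields
\[
\phi^k = (id_G + \nu)^k = \sum_{i=0}^{m-1}\binom{k}{i}\nu^i,
\]
the sum being truncated at $m-1$ because $\nu^i = 0$ for $i\geq m$ (with the usual convention $\binom{k}{i}=0$ if $i>k$).

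Next I would parameterize a generic element of $T_n(\phi,F)$. Fix $F = \{g_1,\ldots,g_t\}\in[G]^{<\omega}$ with $t=|F|$. Since $T_n(\phi,F) = F + \phi(F) + \cdots + \phi^{n-1}(F)$, every element has the form $\sum_{k=0}^{n-1}\phi^k(f_k)$ for some choice $(f_0,\ldots,f_{n-1})\in F^n$. Grouping the indices $k\in\{0,\ldots,n-1\}$ by the value $f_k$ takes in $F$ — that is, setting $S_j := \{k : f_k = g_j\}$ for $j=1,\ldots,t$ — and substituting the binomial expansion above, every element of $T_n(\phi,F)$ takes the shape
\[
\sum_{i=0}^{m-1}\sum_{j=1}^{t} c_{i,j}\,\nu^i(g_j), \qquad c_{i,j} := \sum_{k\in S_j}\binom{k}{i}\in\N.
\]

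The final step is to bound the admissible coefficients: since $\binom{k}{i}\leq k^i\leq n^{m-1}$ for $0\leq k\leq n-1$ and $0\leq i\leq m-1$, and since $|S_j|\leq n$, each of the $m\cdot t$ nonnegative integers $c_{i,j}$ lies in $\{0,1,\ldots,n^m\}$. Hence the assignment $(c_{i,j})_{i,j}\mapsto \sum_{i,j} c_{i,j}\,\nu^i(g_j)$ from $\{0,1,\ldots,n^m\}^{m|F|}$ to $G$ surjects onto $T_n(\phi,F)$, giving $\tau_{\phi,F}(n) \leq (n^m + 1)^{m|F|}$, as required. I do not anticipate any serious obstacle here: the statement is a mild extension of Claim \ref{claim0} (recovered by taking $m=1$, which forces $\phi = id_G$), and the proof is essentially a direct count once the binomial expansion is written out.
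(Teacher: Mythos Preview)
Your proof is correct and follows essentially the same route as the paper: both expand $\phi^k=(id_G+\nu)^k$ via the binomial theorem, truncate at $\nu^{m-1}$, and then bound the number of admissible coefficient tuples. The only difference is organizational---the paper packages the count through the sumsets $s^i(F)_{(n^{i+1})}$ (after reducing to the case $0\in F$) and then invokes Claim~\ref{claim0}, whereas you parameterize elements of $T_n(\phi,F)$ directly by the integer matrix $(c_{i,j})$ and bound each entry; your version is slightly more streamlined in that it needs neither the reduction to $0\in F$ nor the separate appeal to Claim~\ref{claim0}.
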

\begin{proof}
Let $|F|=t$, and let $s=\phi- id_G$, so that $s^m=0$. Then
\begin{itemize}
\item[(1)]$\phi^n(x)=x+ C_n^1 s(x)+ C_n^2 s^2(x)+\ldots+C_n^{m-1}s^{m-1}(x)$ for every $x\in G$ and $n\in\N$ with $n\geq m$; therefore
\item[(2)]$\phi^n(F)\subseteq F+ s(F)_{(C_n^1)}+s^2(F)_{(C_n^2)}+\ldots+ s^{m-1}(F)_{(C_n^{m-1})}$ for every $F\in[G]^{<\omega}$ with $0\in F$ and every $n\in\N$ with $n\geq m$.
\end{itemize}
Hence
$$\phi^n(F)\subseteq F+ s(F)_{(n)}+s^2(F)_{(n^2)}+\ldots+ s^{m-1}(F)_{(n^{m-1})}\ \text{for every $F\in[G]^{<\omega}$ with $0\in F$ and every $n\in\N$ with $n\geq m$}.$$
Therefore
\begin{itemize}
\item[(3)] $T_n(\phi,F)\subseteq F_{(n)}+ s(F)_{(n^2)}+s^2(F)_{(n^3)}+\ldots+s^{m-1}(F)_{(n^m)}$ for every $F\in[G]^{<\omega}$ with $0\in F$ and every $n\in\N$ with $n\geq m$.
\end{itemize}
If $|F|=t$, then $|s^k(F)|\leq t$ for every $k\in\N$, so applying Claim \ref{claim0} to each $s^k(F)$ and (3), we find
\begin{align*}
\tau_{\f,F}(n) &\leq|F_{(n)}|\cdot|s(F)_{(n^2)}|\cdot|s^2(F)_{(n^3)}|\cdot\ldots\cdot|s^{m-1}(F)_{(n^m)}| \\
& \leq (n+1)^t\cdot(n^2+1)^t\cdot(n^3+1)^t\cdot\ldots\cdot(n^m+1)^t\leq(n^m+1)^{mt}.
\end{align*}
\end{proof}

\begin{proposition}\label{QQ-pgp}
Let $(G,\phi)$ be an algebraic flow. Then $\f\restriction_{\QQ(G,\f)}\in\pgp$. %In particular $h(\f\restriction_{\QQ(G,\f)})=0$.
\end{proposition}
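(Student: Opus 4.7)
My plan is to reduce the statement to an application of Claim \ref{claim1} by induction along the chain $\{P_n(G,\phi)\}_{n\in\N}$. First I would observe that $\QQ(G,\phi)=\bigcup_{n\in\N}Q_n(G,\phi)$ is a direct limit of $\phi$-invariant subgroups, so Proposition \ref{properties}(d) reduces the task to showing $\phi\restriction_{Q_n(G,\phi)}\in\pgp$ for every $n\in\N$. By Lemma \ref{Q=P+K}, $Q_n(G,\phi)=P_n(G,\phi)\oplus\ker_\infty\phi$ as a direct sum of $\phi$-invariant subgroups; since $\phi\restriction_{\ker_\infty\phi}\in\pgp$ by Example \ref{lex}(b), Proposition \ref{properties}(c) further reduces the problem to showing $\phi\restriction_{P_n(G,\phi)}\in\pgp$ for every $n\in\N$.

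The heart of the argument will be the following auxiliary assertion, proved by induction on $n$: for every $F\in[P_n(G,\phi)]^{<\omega}$ there exist $k,m\in\N_+$ such that $(\phi^k-id_G)^m=0$ on $V(\phi,F)$. The base case $n=0$ is trivial. For the inductive step I would fix $F\in[P_{n+1}(G,\phi)]^{<\omega}$ and, using Proposition \ref{PnQn}(a) together with Claim \ref{claim}, produce $k_1\in\N_+$ with $\phi^{k_1}(f)-f\in P_n(G,\phi)$ for every $f\in F$. Setting $F'=\{\phi^{k_1}(f)-f:f\in F\}\subseteq P_n(G,\phi)$ and $\tau_1=\phi^{k_1}-id_G$, the fact that $\tau_1$ commutes with $\phi$ gives $\tau_1(\phi^j(f))=\phi^j(\tau_1(f))\in V(\phi,F')$ for all $f,j$, hence $\tau_1(V(\phi,F))\subseteq V(\phi,F')$. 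The inductive hypothesis applied to $F'$ provides $k_2,m_2\in\N_+$ with $\nu_2^{m_2}=0$ on $V(\phi,F')$, where $\nu_2=\phi^{k_2}-id_G$. Taking $k=k_1k_2$, expansion of $\phi^k=(id_G+\nu_2)^{k_1}$ on $V(\phi,F')$ gives $\phi^k-id_G=\nu_2\cdot P(\nu_2)$ for a polynomial $P\in\Z[x]$, so $(\phi^k-id_G)^{m_2}=\nu_2^{m_2}P(\nu_2)^{m_2}=0$ there, while expansion of $\phi^k=(id_G+\tau_1)^{k_2}$ on $V(\phi,F)$ yields $\phi^k-id_G=\sum_{i=1}^{k_2}\binom{k_2}{i}\tau_1^i$, which maps $V(\phi,F)$ into $V(\phi,F')$ because $\tau_1^i(V(\phi,F))\subseteq V(\phi,F')$ for every $i\geq 1$. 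Composing, $(\phi^k-id_G)^{m_2+1}(V(\phi,F))\subseteq(\phi^k-id_G)^{m_2}(V(\phi,F'))=0$, closing the induction with $m=m_2+1$.

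Having the assertion, for arbitrary $F\in[P_n(G,\phi)]^{<\omega}$ I would pick $k,m$ as above, set $F_0=T_k(\phi,F)$, and use $V(\phi^k,F_0)\subseteq V(\phi,F)$ to transfer $(\phi^k-id_G)^m=0$ to $V(\phi^k,F_0)$. Claim \ref{claim1} applied to $\phi^k\restriction_{V(\phi^k,F_0)}$ then gives $\tau_{\phi^k,F_0}(n)\leq(n^m+1)^{m|F_0|}$, so $\phi^k\in\pgp_{F_0}$, and Lemma \ref{pgp-lemma}(b) yields $\phi\in\pgp_F$. By the arbitrariness of $F$, $\phi\restriction_{P_n(G,\phi)}\in\pgp$, finishing the reduction.

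The main obstacle I expect is the inductive step of the central assertion: one must manufacture a single exponent $k$ for which $\phi^k-id_G$ is nilpotent on $V(\phi,F)$, simultaneously reconciling the ``periodicity of $\phi$ on $V(\phi,F)$ modulo $P_n(G,\phi)$'' of level $n+1$ with the already established nilpotence of $\phi^{k_2}-id_G$ on $V(\phi,F')\subseteq P_n(G,\phi)$. The decisive algebraic point is that $(id_G+\nu_2)^{k_1}-id_G$ lies in the ideal generated by $\nu_2$ inside the commutative subring of $\End(G)$ generated by $\phi$, so composing with the already nilpotent operator $\phi^{k_2}-id_G$ raises the degree of nilpotence by exactly one when passing from $V(\phi,F')$ to $V(\phi,F)$. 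Note also that the preliminary split $Q_n=P_n\oplus\ker_\infty\phi$ is genuinely necessary here, since on $\ker_\infty\phi$ the operator $(\phi^k-id_G)^m$ is never zero for nonzero elements, yet polynomial growth on $\ker_\infty\phi$ is handled separately by Example \ref{lex}(b).
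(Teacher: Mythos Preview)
Your proof is correct and shares the paper's overall architecture: both arguments first reduce to $\phi\restriction_{P_n(G,\phi)}\in\pgp$ via the direct-limit property and the splitting $Q_n=P_n\oplus\ker_\infty\phi$, then aim to show that, after localizing to $V(\phi,F)$ and passing to a power $\phi^k$, the operator $\phi^k-id_G$ becomes nilpotent there, so that Claim~\ref{claim1} and Lemma~\ref{pgp-lemma}(b) finish the job.

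The substantive difference is in how that nilpotence is obtained. The paper fixes the level $m$, reduces to $G=V(\phi,F)$, and invokes the fact that $\Z[X]$ is Noetherian to conclude that each intermediate quotient $P_{m-k}(G,\phi)/P_{m-k-1}(G,\phi)$ is a finitely generated $\Z[X]$-module; Claim~\ref{fingen,locper->per} then upgrades local periodicity on each quotient to genuine periodicity, and a single common exponent $w$ works for all layers simultaneously, giving $(\phi^w-id_G)^m=0$. Your route instead inducts on the level $n$: at each step you only need a period $k_1$ on the \emph{finite} set $F$ itself, push $F$ down to $F'\subseteq P_n$ via $\tau_1=\phi^{k_1}-id_G$ (using that $\tau_1$ commutes with $\phi$ to get $\tau_1(V(\phi,F))\subseteq V(\phi,F')$), apply the inductive hypothesis to $F'$, and then merge the two exponents through the binomial identities $\phi^{k_1k_2}-id_G\in\nu_2\cdot\Z[\nu_2]$ and $\phi^{k_1k_2}-id_G\in\tau_1\cdot\Z[\tau_1]$ in the commutative ring $\Z[\phi]$. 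This cleanly sidesteps the Noetherian argument and Claim~\ref{fingen,locper->per}, at the cost of tracking two expansions; the paper's route is more uniform (one exponent handles all layers at once) but relies on a structural fact about $\Z[X]$ that your argument shows is not actually needed here.
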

\begin{proof}
We start proving that
\begin{equation}\label{Pm-1->Pm-gen}
\phi\restriction_{P_m(G,\phi)}\in\pgp \ \text{for every}\ m\in\N.
\end{equation}
Fix $m\in\N$ and $F\in[P_m(G,\phi)]^{<\omega}$. We want to prove that $\f\in\pgp_F$, that is, we have to find $P_F(x)\in\Z[x]$ such that $\tau_{\f,F}(n)\leq P_F(n)$ for every $n\in\N_+$.
Since $V(\f,F)$ is a $\f$-invariant subgroup of $P_m(G,\phi)$ and since $P_m(V(\f,F),\f\restriction_{V(\f,F)})=P_m(G,\f)\cap V(\f,F)=V(\f,F)$, by Claim \ref{pgp-local} we can suppose without loss of generality that $G=V(\f,F)$.

For every $k\in\{0,\ldots,m-2\}$ let $$G_k = P_{m-k}(G,\f)/P_{m-k-1}(G,\f).$$ Let $\overline\f_k:G/P_{m-k-1}(G,\f)\to G/P_{m-k-1}(G,\f)$ be the endomorphism induced by $\phi$. Every $\overline\f_k\restriction_{G_k}: G_k \to G_k$ is locally periodic, because $G_k=P_1(G/P_{m-k-1}(G,\f),\overline\f_k)$. 
Since $G=V(\phi,F)$, $G/P_{m-k-1}(G,\f)=V(\overline\f_k,\pi_k(F))$, where $\pi_k:G\to G/P_{m-k-1}(G,\f)$ is the canonical projection. Since $G/P_{m-k-1}(G,\f)$ is a finitely generated $\Z[X]$-module, and $\Z[X]$ is noetherian, every $G_k$ is finitely generated as a $\Z[X]$-module, that is, there exists $F_k\in[G_k]^{<\omega}$ such that $G_k=V(\overline\f_k,F_k)$.
By Claim \ref{fingen,locper->per} each $\overline\f_k$ is periodic on $G_k$. Then there exists $w_k\in\N_+$ such that $\overline\f^{w_k}_k\restriction_{G_k}=id_{G_k}$. Let $w=w_0\cdot\ldots\cdot w_{m-2}$. Then $\overline\f^w_k\restriction_{G_k}=id_{G_k}$ for every $k\in\{0,\ldots,m-2\}$.

By Claim \ref{Tf-Tf^k}(c), $G=V(\f,F)=\hull{T(\f,F)}=\hull{T(\f^w,T_w(\f,F)}$; moreover, $P_l(G,\f)=P_l(G,\f^w)$ for every $l\in\N_+$.
By Lemma \ref{pgp-lemma}(b), $\f^w\in\pgp_{T_w(\f,F)}$ implies $\f\in\pgp_F$. So without loss of generality we can replace $\f^w$ by $\f$, that is, we can suppose that $\overline\f_k\restriction_{G_k}=id_{G_k}$ for every $k$.

Let $s=\f-id_G$; then $s(P_{m-k}(G,\f)) \subseteq P_{m-k-1}(G,\f)$ for every $k\in\{0,\ldots,m-2\}$. In particular, $s^m = 0$. By Claim \ref{claim1}(b) $\tau_{\phi,F}(n)\leq(n^m+1)^t$ for every $n\in\N_+$ with $n\geq m$. Let $P_F(x)=(x^m+1)^t+k\in\Z[x]$, where $k=\tau_{\phi,F}(m)$. Then $\tau_{\f,F}(n)\leq P_F(n)$ for every $n\in\N_+$, and this shows that $\f\in\pgp_F$. Since $F$ was chosen arbitrary, we have $\f\in\pgp$. This concludes the proof of \eqref{Pm-1->Pm-gen}.

\smallskip
By \eqref{Pm-1->Pm-gen}, $\f\restriction_{P_n(G,\f)}\in\pgp$ for every $n\in\N$. On the other hand, $\f\restriction_{\ker_\infty \f}\in\pgp$ by  Lemma \ref{pgp-lemma}(d), hence
by Proposition \ref{properties}(c) and Lemma \ref{Q=P+K}, it follows that $\f\restriction_{Q_n(G,\f)}\in\pgp$ for every $n\in\N$. Since $\QQ(G,\f)$ is an increasing union of the subgroups $Q_n(G,\f)$, $\f\restriction_{\QQ(G,\f)}\in\pgp$ by Proposition \ref{properties}(d).
\end{proof}

\begin{corollary}\label{Q<pgp}
For every algebraic flow $(G,\phi)$, we have $\QQ(G,\phi)\subseteq\pgp(G,\phi)$.
\end{corollary}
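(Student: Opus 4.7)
The plan is to deduce this corollary as an essentially immediate consequence of Proposition \ref{QQ-pgp} combined with the defining property of $\pgp(G,\phi)$. Once one has that the restriction of $\phi$ to $\QQ(G,\phi)$ has polynomial growth, the conclusion follows from the maximality built into the definition of $\pgp(G,\phi)$.

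Concretely, I would first invoke Proposition \ref{PnQn}(c) to recall that $\QQ(G,\phi)$ is a $\phi$-invariant subgroup of $G$. Next, I would apply Proposition \ref{QQ-pgp}, which asserts precisely that $\phi\restriction_{\QQ(G,\phi)}\in\pgp$. Thus $\QQ(G,\phi)$ is a member of the family $\mathcal F=\{H\leq G: H\ \text{$\phi$-invariant},\ \phi\restriction_H\in\pgp\}$ appearing in the proof of Lemma \ref{pgp-existence}.

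Finally, by Lemma \ref{pgp-existence} the subgroup $\pgp(G,\phi)$ exists as the greatest element of $\mathcal F$ (equivalently, $\pgp(G,\phi)=\hull{H:H\in\mathcal F}$ and contains every member of $\mathcal F$). Since $\QQ(G,\phi)\in\mathcal F$, we immediately obtain $\QQ(G,\phi)\subseteq\pgp(G,\phi)$, as required. There is no genuine obstacle here: all the effort has already been spent in Proposition \ref{QQ-pgp}, and the corollary is only a reformulation in terms of the maximal subgroup $\pgp(G,\phi)$.
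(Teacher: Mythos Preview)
Your proposal is correct and matches the paper's approach: the corollary is stated immediately after Proposition \ref{QQ-pgp} without proof, precisely because it follows at once from that proposition together with the defining maximality property of $\pgp(G,\phi)$ established in Lemma \ref{pgp-existence}. Your explicit invocation of Proposition \ref{PnQn}(c) for $\phi$-invariance is a reasonable bit of extra care, but otherwise there is nothing to add.
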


\begin{corollary}
Every locally quasi-periodic endomorphism has polynomial growth.
\end{corollary}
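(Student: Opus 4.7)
The plan is to deduce this corollary as an immediate consequence of Proposition \ref{QQ-pgp}. Recall that $\phi$ being locally quasi-periodic means exactly that $Q_1(G,\phi)=G$ (by Definition of local quasi-periodicity given just before Proposition \ref{t=Q1}).

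From the chain $Q_1(G,\phi)\subseteq Q_n(G,\phi)\subseteq \QQ(G,\phi)\subseteq G$ for every $n\in\N$, the equality $Q_1(G,\phi)=G$ forces $\QQ(G,\phi)=G$. Applying Proposition \ref{QQ-pgp} then gives $\phi=\phi\restriction_{\QQ(G,\phi)}\in\pgp$, which is exactly the desired conclusion.

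There is essentially no obstacle here: all the real work was done in establishing Proposition \ref{QQ-pgp} (and the underlying facts about the subgroups $Q_n(G,\phi)$). The corollary just packages the special case $\QQ(G,\phi)=G$, and the only thing to observe is that local quasi-periodicity is, by definition, precisely this special case.
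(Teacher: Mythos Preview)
Your proof is correct and matches the paper's intended argument: the corollary is stated without proof immediately after Corollary \ref{Q<pgp}, and is meant to follow exactly as you describe, from $Q_1(G,\phi)=G$ forcing $\QQ(G,\phi)=G$ and then applying Proposition \ref{QQ-pgp} (equivalently, Corollary \ref{Q<pgp}).
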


%%%%%%%%%%%%%%%%%%%%%%%%%%%%%%%%%%%%%%%%%%%%%
%  ----------       Two characterizations of the Pinsker subgroup     ---------------------------------------
%%%%%%%%%%%%%%%%%%%%%%%%%%%%%%%%%%%%%%%%%%%%%

\section{Two characterizations of the Pinsker subgroup}\label{Q-P^h-sec}

\begin{lemma}\label{pgp->h=0}
Let $(G,\phi)$ be an algebraic flow.
\begin{itemize}
\item[(a)] If $\f\in\pgp_F$ for some $F\in[G]^{<\omega}$, then $H(\phi,F)=0$.
\item[(b)] If $\f\in\pgp$, then $h(\f)=0$.
\end{itemize}
\end{lemma}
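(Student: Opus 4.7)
The plan is to unwind the definitions directly. For part (a), I assume $\phi\in\pgp_F$, which furnishes a polynomial $P_F(x)\in\Z[x]$ of some degree $d\geq 0$ with $\tau_{\phi,F}(n)\leq P_F(n)$ for all $n\in\N_+$. I may as well assume $P_F(n)\geq 1$ eventually (otherwise $\tau_{\phi,F}$ is eventually bounded and the limit is obviously $0$). Taking logarithms, I get
\[
0 \leq \frac{\log \tau_{\phi,F}(n)}{n} \leq \frac{\log P_F(n)}{n}
\]
for all sufficiently large $n$. Since $\log P_F(n) = d\log n + O(1)$ as $n\to\infty$, the right-hand side tends to $0$. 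Hence the limit defining $H(\phi,F)$ in \eqref{lim-eq} equals $0$.

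For part (b), the hypothesis $\phi\in\pgp$ means $\phi\in\pgp_F$ for every $F\in[G]^{<\omega}$. Applying (a) to each such $F$ yields $H(\phi,F)=0$, and by the definition $h(\phi) = \sup_{F\in[G]^{<\omega}}H(\phi,F)$ this forces $h(\phi)=0$.

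There is no real obstacle here; the statement is essentially the observation that polynomial growth has zero exponential growth rate. The only subtlety worth mentioning is ensuring that the logarithms are well-defined, which is handled by noting $\tau_{\phi,F}(n)\geq 1$ always (since $F$ is nonempty) and that a dominating polynomial $P_F$ can be taken to be eventually positive without loss of generality (e.g., by replacing $P_F(x)$ with $P_F(x)+|F|$ if necessary, using $\tau_{\phi,F}(n)\geq |F|\geq 1$).
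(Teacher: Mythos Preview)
Your proof is correct and follows essentially the same approach as the paper: bound $\tau_{\phi,F}(n)$ by $P_F(n)$, take logarithms, divide by $n$, and use that $\frac{\log P_F(n)}{n}\to 0$; part (b) then follows immediately from (a) by taking the supremum over all $F$. Your extra care about the positivity of $P_F$ is a reasonable technical remark that the paper simply elides.
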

\begin{proof}
(a) By definition there exists $P_F(x)\in\Z[x]$ such that $\tau_{\phi,F}(n)\leq P_F(n)$ for every $n\in\N_+$. Then
 $$
 H(\phi,F)=\lim_{n\to\infty}\frac{\log\tau_{\phi,F}(n)}{n}\leq\lim_{n\to\infty}\frac{\log P_F(n)}{n}=0. 
 $$
(b) Follows from (a).
\end{proof}

\begin{corollary}\label{Q<PGP<P}
Let $(G,\phi)$ be an algebraic flow. Then $\QQ(G,\f)\subseteq\pgp(G,\f)\subseteq\P(G,\f)$.
\end{corollary}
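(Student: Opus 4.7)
The proof proposal is essentially a one-line combination of two results already established just before the statement, so there is no serious obstacle.

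For the first inclusion $\QQ(G,\f)\subseteq \pgp(G,\f)$, I would simply invoke Corollary \ref{Q<pgp}, which has already been proved as a direct consequence of Proposition \ref{QQ-pgp}.

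For the second inclusion $\pgp(G,\f)\subseteq \P(G,\f)$, the plan is to apply Lemma \ref{pgp->h=0}(b) to the restriction $\phi\restriction_{\pgp(G,\f)}$. By the very definition of $\pgp(G,\f)$ (see Lemma \ref{pgp-existence}), this restriction lies in $\pgp$, so Lemma \ref{pgp->h=0}(b) yields $h(\phi\restriction_{\pgp(G,\f)})=0$. Since $\pgp(G,\f)$ is a $\phi$-invariant subgroup of $G$, the maximality property of the Pinsker subgroup established in Proposition \ref{P:existence} then forces $\pgp(G,\f)\subseteq \P(G,\f)$.

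Combining the two inclusions gives the chain $\QQ(G,\f)\subseteq \pgp(G,\f)\subseteq \P(G,\f)$. Since each step cites a result proved earlier in the paper, no new computation is needed; the only care is to point out that both $\pgp(G,\f)$ and $\P(G,\f)$ have already been shown to exist and to carry the required maximality property.
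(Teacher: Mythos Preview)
Your proposal is correct and matches the paper's own proof essentially verbatim: the paper cites Corollary \ref{Q<pgp} for the first inclusion and Lemma \ref{pgp->h=0}(b) for the second. Your version is just slightly more explicit in spelling out why Lemma \ref{pgp->h=0}(b) yields the inclusion (via the maximality in Proposition \ref{P:existence}), which the paper leaves implicit.
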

\begin{proof}
The inclusion $\QQ(G,\f)\subseteq\pgp(G,\f)$ is proved in Corollary \ref{Q<pgp}. By Lemma \ref{pgp->h=0}(b), $\pgp(G,\f)\subseteq\P(G,\f)$.
\end{proof}

\begin{remark}\label{uff-rem}
Let $(G,\phi)$ be an algebraic flow. It is possible to prove directly that $\QQ(G,\f)\subseteq\P(G,\f)$, using
the inclusion $Q_1(G,\f)\subseteq\P(G,\f)$ from Proposition \ref{t=Q1} and the following weaker form of the Addition Theorem \ref{AT}: if $H$ is a $\phi$-invariant subgroup of $G$, then $h(\phi)=0$ provided that $h(\phi\restriction_H)=0=h(\overline\phi)$, where $\overline\phi:G/H\to G/H$ is the endomorphism induced by $\phi$. 
\end{remark}

The inclusion $\QQ(G,\f)\subseteq\P(G,\f)$ from Corollary \ref{Q<PGP<P} gives 

\begin{corollary}\label{LastCoro} 
Let $(G,\phi)$ be an algebraic flow. If $\phi$ has completely positive algebraic entropy then $\phi$ is algebraically ergodic. 
\end{corollary}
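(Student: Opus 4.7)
The statement follows essentially immediately from the chain of inclusions established in Corollary \ref{Q<PGP<P}, so my plan is just to unfold the two definitions and invoke that corollary.

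First I would recall that, by definition, $\phi$ has completely positive algebraic entropy precisely when $\mathbf{P}(G,\phi) = 0$: indeed, the condition $h(\phi\restriction_H) > 0$ for every $\phi$-invariant subgroup $H \ne 0$ of $G$ forbids the existence of any non-zero $\phi$-invariant subgroup on which $\phi$ has zero algebraic entropy, and the Pinsker subgroup $\mathbf{P}(G,\phi)$ is by construction the largest such subgroup. Conversely, if $\mathbf{P}(G,\phi)=0$ then every nonzero $\phi$-invariant $H$ satisfies $h(\phi\restriction_H)>0$. So ``completely positive algebraic entropy'' is tautologically equivalent to $\mathbf{P}(G,\phi)=0$, as already remarked after the definition.

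Next I would recall that, by Definition \ref{alg-erg}, $\phi$ is algebraically ergodic exactly when $\mathfrak{Q}(G,\phi)=0$. Hence the implication to prove reads: if $\mathbf{P}(G,\phi)=0$, then $\mathfrak{Q}(G,\phi)=0$.

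Finally I would invoke Corollary \ref{Q<PGP<P}, which gives the chain
\[
\mathfrak{Q}(G,\phi) \subseteq \mathrm{Pol}(G,\phi) \subseteq \mathbf{P}(G,\phi).
\]
Assuming $\mathbf{P}(G,\phi)=0$ forces $\mathfrak{Q}(G,\phi)=0$, i.e.\ $\phi$ is algebraically ergodic, which completes the proof. There is really no obstacle here: the entire content of the corollary is packaged in the already-proved inclusion $\mathfrak{Q}(G,\phi)\subseteq \mathbf{P}(G,\phi)$ (which itself rests on Proposition \ref{QQ-pgp} and Lemma \ref{pgp->h=0}(b)), so this is purely a bookkeeping consequence that makes explicit the relationship between the two notions introduced earlier in the paper.
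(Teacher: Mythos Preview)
Your proof is correct and follows exactly the paper's approach: the corollary is stated immediately after the inclusion $\mathfrak{Q}(G,\phi)\subseteq\mathbf{P}(G,\phi)$ from Corollary \ref{Q<PGP<P}, and is deduced directly from it. You simply spell out the translation through the definitions a bit more explicitly than the paper does.
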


We shall see below that this implication can be inverted (see Corollary \ref{blablabla}). 

\begin{corollary}
Let $(G,\phi)$ be an algebraic flow. If $G$ is torsion, then $\P(G,\f)=t_\phi(G)=\pgp(G,\phi)=\QQ(G,\f)=Q_1(G,\f)$.
\end{corollary}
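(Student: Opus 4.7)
The plan is simply to combine the chain of inclusions already established with the characterization of $\P(G,\f)$ in the torsion case. Concretely, Proposition \ref{t=Q1} asserts that in general
\[
t_\phi(G) = t(G) \cap Q_1(G,\f) \subseteq Q_1(G,\f) \subseteq \P(G,\f),
\]
and collapses the outer terms under the hypothesis that $G$ is torsion, yielding $t_\phi(G) = Q_1(G,\f) = \P(G,\f)$. This already disposes of three of the five subgroups in the statement.

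For the remaining two, I would invoke Corollary \ref{Q<PGP<P}, which gives the chain $\QQ(G,\f) \subseteq \pgp(G,\f) \subseteq \P(G,\f)$ for \emph{any} algebraic flow. Combined with the trivial inclusion $Q_1(G,\f) \subseteq \QQ(G,\f)$ (immediate from the definition $\QQ(G,\f) = \bigcup_n Q_n(G,\f)$), one obtains the chain
\[
Q_1(G,\f) \subseteq \QQ(G,\f) \subseteq \pgp(G,\f) \subseteq \P(G,\f).
\]
Since the torsion hypothesis forces the two endpoints of this chain to coincide via the preceding paragraph, all four intermediate subgroups must coincide as well, and together with $t_\phi(G) = \P(G,\f)$ this gives the desired five-fold equality.

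There really is no obstacle here: the statement is a bookkeeping corollary assembling Proposition \ref{t=Q1}, Corollary \ref{t=P} (which is itself folded into Proposition \ref{t=Q1}), and Corollary \ref{Q<PGP<P}. I would present it as a two-line proof invoking these three results without any further computation.
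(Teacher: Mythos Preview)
Your proposal is correct and mirrors the paper's own proof almost exactly: the paper also invokes Proposition \ref{t=Q1} to collapse $\P(G,\f)=t_\phi(G)=Q_1(G,\f)$ in the torsion case, notes the trivial inclusion $Q_1(G,\f)\subseteq\QQ(G,\f)$, and then applies Corollary \ref{Q<PGP<P} to close the chain. There is nothing to add or correct.
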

\begin{proof}
By Proposition \ref{t=Q1}, $\P(G,\f)=t_\phi(G)=Q_1(G,\f)\subseteq \QQ(G,\f)$, so Corollary \ref{Q<PGP<P} applies.
\end{proof}

The following theorem due to Kronecker will be needed in the next proof: 

\begin{theorem}[Kronecker Theorem]\label{Kronecker}\emph{\cite{Kr}}
Let $\alpha$ be a non-zero algebraic integer and let $f(t)\in\Z[t]$ be its minimal polynomial over $\Q$. If all the roots of $f(t)$ have absolute value $\leq 1$, then $\alpha$ is a root of unity.
\end{theorem}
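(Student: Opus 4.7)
The plan is to use a clever pigeonhole argument on the polynomials formed by the powers of the conjugates of $\alpha$. Let $\alpha_1=\alpha, \alpha_2, \ldots, \alpha_n$ be the full set of conjugates of $\alpha$ over $\Q$, so $f(t) = \prod_{i=1}^n (t - \alpha_i)$ is monic with integer coefficients (as $\alpha$ is an algebraic integer) and every $|\alpha_i|\leq 1$. For each $k\in\N_+$, I would form the polynomial
\[
f_k(t) = \prod_{i=1}^n (t - \alpha_i^k).
\]
The first key step is to observe that $f_k(t)\in\Z[t]$: its coefficients are, up to sign, the elementary symmetric functions of $\alpha_1^k,\ldots,\alpha_n^k$, which are symmetric polynomials in the $\alpha_i$; by the fundamental theorem of symmetric polynomials they are polynomials with integer coefficients in the elementary symmetric functions of the $\alpha_1,\ldots,\alpha_n$, and the latter are exactly the integer coefficients of $f(t)$.

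The second key step is to bound the coefficients of $f_k$: the $j$-th elementary symmetric function of $\alpha_1^k,\ldots,\alpha_n^k$ is a sum of $\binom{n}{j}$ products each of absolute value at most $1$, so its absolute value is at most $\binom{n}{j}$. Hence each coefficient of $f_k$ lies in a fixed finite set of integers, independent of $k$. Consequently, as $k$ ranges over $\N_+$, there are only finitely many distinct polynomials $f_k(t)$, so by pigeonhole there exist $k<l$ in $\N_+$ with $f_k = f_l$. This means the multisets $\{\alpha_1^k,\ldots,\alpha_n^k\}$ and $\{\alpha_1^l,\ldots,\alpha_n^l\}$ coincide, so there is a permutation $\sigma$ of $\{1,\ldots,n\}$ with $\alpha_i^l = \alpha_{\sigma(i)}^k$ for every $i$.

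The third step is to extract that $\alpha$ is a root of unity. Iterating the identity yields $\alpha_i^{l^m} = \alpha_{\sigma^m(i)}^{k^m}$ for every $m\in\N_+$; taking $m$ to be the order $r$ of $\sigma$, I obtain $\alpha_i^{l^r} = \alpha_i^{k^r}$, hence $\alpha_i^{l^r - k^r} = 1$, since $\alpha_i\neq 0$ (as $\alpha\neq 0$ and conjugates of a non-zero algebraic number are non-zero). Because $l>k>0$, the exponent $l^r-k^r$ is a positive integer, so $\alpha=\alpha_1$ is a root of unity. I expect the main subtlety to be the pigeonhole step and the careful bookkeeping with the permutation $\sigma$ to deduce that a \emph{common} positive exponent kills all $\alpha_i$ simultaneously; the integrality of the coefficients of $f_k$ is routine but essential, since it is what turns the bound on the coefficients into a finiteness statement.
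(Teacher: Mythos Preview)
Your argument is correct and is essentially Kronecker's original proof: bound the coefficients of the integer polynomials $f_k(t)=\prod_i(t-\alpha_i^k)$ uniformly in $k$, apply pigeonhole to get $f_k=f_l$, and then iterate the resulting permutation of the roots to force a common positive exponent annihilating all $\alpha_i$. The bookkeeping with $\sigma$ is handled correctly, and the non-vanishing of the $\alpha_i$ follows from the irreducibility of $f$ together with $\alpha\neq 0$.

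There is nothing to compare with, however: the paper does not supply a proof of this theorem. It is stated with a citation to Kronecker's 1857 paper and then invoked as a black box in the final step of Proposition~\ref{h=0->qp}, where one needs to know that an automorphism of $\Q^n$ whose characteristic polynomial is monic with integer coefficients and all roots on the unit circle must have a non-trivial periodic point. Your write-up would serve perfectly well as a self-contained justification of that citation.
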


In the next proposition we show that for a non-trivial abelian group an endomorphism of zero algebraic entropy is not algebraically ergodic.

\begin{proposition}\label{h=0->qp}
Let $(G,\phi)$ be an algebraic flow. If $G\neq0$ and $h(\phi)=0$, then $Q_1(G,\phi)\neq 0$.
\end{proposition}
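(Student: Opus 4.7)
The strategy is to exhibit a non-zero quasi-periodic point via a case analysis. If $\ker\phi \neq 0$, then any $0 \neq x \in \ker\phi$ satisfies $\phi(x) = 0 = \phi^2(x)$, hence $x \in Q_1(G,\phi)$. So I may assume $\phi$ is injective. If in addition $t(G) \neq 0$, then $t(G)$ is $\phi$-invariant with $h(\phi\restriction_{t(G)}) \leq h(\phi) = 0$, so Corollary \ref{t=P} applied to $t(G)$ gives $t(G) = \P(t(G),\phi\restriction_{t(G)}) = t_\phi(t(G)) = t_\phi(G)$, and Proposition \ref{t=Q1} yields $t_\phi(G) \subseteq Q_1(G,\phi)$. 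It remains to treat the case where $G$ is torsion-free and $\phi$ is injective.

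In that case, extend $\phi$ to its unique extension $\widetilde\phi \in \End(D(G))$; the map $\widetilde\phi$ is again injective (if $\widetilde\phi(x)=0$ and $nx \in G$, then $\phi(nx)=0$, so $nx=0$, hence $x=0$ as $D(G)$ is torsion-free) and satisfies $h(\widetilde\phi) = 0$ by Lemma \ref{AA}. Pick any $0 \neq g \in G$ and let $W$ be the $\Q$-linear span of $\{\widetilde\phi^n(g) : n \in \N\}$ inside $D(G)$. The key claim is $\dim_\Q W < \infty$: were $W$ infinite-dimensional, $\{\phi^n(g) : n \in \N\}$ would be $\Z$-linearly independent in $G$, so $V(\phi,g) = \bigoplus_n \Z \phi^n(g) \cong \Z^{(\N)}$ with $\phi\restriction_{V(\phi,g)}$ conjugate to the right Bernoulli shift $\beta_\Z$; then Example \ref{beta} and Fact \ref{conjugation_by_iso} would give $\infty = h(\beta_\Z) = h(\phi\restriction_{V(\phi,g)}) \leq h(\phi) = 0$, a contradiction. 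Hence $\dim_\Q W = d < \infty$, and since $\widetilde\phi\restriction_W$ is an injective endomorphism of the finite-dimensional $\Q$-vector space $W$, it is an automorphism of $W \cong \Q^d$.

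The Algebraic Yuzvinski Formula (Theorem \ref{Yuz}) now applies to $\widetilde\phi\restriction_W$: from $0 = h(\widetilde\phi) \geq h(\widetilde\phi\restriction_W) = \log s + \sum_{|\lambda_i|>1}\log|\lambda_i|$ and the non-negativity of both summands, $s=1$ (so the characteristic polynomial of $\widetilde\phi\restriction_W$ lies in $\Z[t]$) and every eigenvalue satisfies $|\lambda_i| \leq 1$. As $\widetilde\phi\restriction_W$ is invertible, no $\lambda_i$ is zero; each $\lambda_i$ is thus a non-zero algebraic integer whose Galois conjugates (also roots of the integral characteristic polynomial) satisfy $|\lambda| \leq 1$, so by Kronecker's Theorem \ref{Kronecker} every $\lambda_i$ is a root of unity. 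Pick $N \in \N_+$ with $\lambda_i^N = 1$ for all $i$; then $(\widetilde\phi\restriction_W)^N - id_W$ has only the eigenvalue $0$, hence is nilpotent on the non-zero space $W$, and its kernel contains some $v \neq 0$. Selecting $m \in \N_+$ with $w := mv \in G$ (as $D(G)$ is the divisible hull of $G$), I obtain $w \neq 0$ by torsion-freeness of $D(G)$, and $\phi^N(w) = m\widetilde\phi^N(v) = mv = w$; thus $w \in P_1(G,\phi) \subseteq Q_1(G,\phi)$. The crux is the dichotomy between infinite-dimensional orbit span (ruled out by the infinite entropy of $\beta_\Z$) and finite-dimensional orbit span (handled via Yuzvinski and Kronecker); the torsion and non-injective cases are disposed of by direct appeal to earlier structure results.
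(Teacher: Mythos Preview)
Your proof is correct and follows essentially the same route as the paper's: the same case split (non-injective, nontrivial torsion, torsion-free injective), the same passage to the divisible hull via Lemma~\ref{AA}, the same reduction to finite rank by recognizing an infinite-rank orbit as a copy of $\beta_\Z$, and the same endgame via the Algebraic Yuzvinski Formula and Kronecker's Theorem. The only cosmetic differences are that you work with the single $\widetilde\phi$-invariant subspace $W\subseteq D(G)$ and pull the periodic point back to $G$ at the end (rather than first replacing $G$ by $D(G)$ as in the paper's step~(c)), and that you spell out the nilpotency of $(\widetilde\phi\restriction_W)^N-id_W$ explicitly where the paper simply asserts the existence of a periodic point.
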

\begin{proof}
We split the proof in several steps, restricting the problem to the case of an automorphism of $\Q^n$ for some $n\in\N_+$.

\smallskip
(a) We can suppose that $\phi$ is injective. Indeed, if $\phi$ is not injective, then there are certainly non-zero quasi-periodic elements, as $\ker\f\neq0$. 

\smallskip
(b) We can suppose that $G$ is torsion-free. In fact, if $G$ is torsion, $G=\P(G,\f)=Q_1(G,\f)$ by Proposition \ref{t=Q1}, and so $Q_1(G,\f)\neq0$. If $G$ has non-trivial torsion elements, then $t(G)\ne0$, and so $Q_1(t(G),\phi\restriction_{t(G)}) \neq 0$ by the torsion case. 

\smallskip
(c) We can suppose that $G$ is a divisible torsion-free abelian group. Indeed, by (b) we can assume that $G$ is torsion-free. Let $D$ be the divisible hull of $G$ and $\widetilde\f:D\to D$ the (unique) extension of $\f$ to $D$. By Lemma \ref{AA}, $h(\widetilde\f)=0$.
Assume that $Q_1(D,\widetilde\f)\neq 0$. Since $G$ is essential in $D$ and $Q_1(G,\f)=Q_1(D,\widetilde\f)\cap G$ by Lemma \ref{Q_H}(a), it follows that also $Q_1(G,\f)\neq 0$.

\smallskip
(d) We can suppose that $G$ is a divisible torsion-free abelian group of finite rank. Indeed, if there exists a non-zero element $x\in G$ with $V(\phi,x)$ of infinite rank, then $h(\f\restriction_{V(\phi,x)})=\infty$, since $\phi\restriction_{V(\phi,x)}$ is conjugated to the right Bernoulli shift $\beta_\Z$ with $h(\beta_\Z)=\infty$ (see Example \ref{beta}) and so Fact \ref{conjugation_by_iso}(b) applies. By Fact \ref{conjugation_by_iso}(a) this implies $h(\phi)=\infty$, against our hypothesis. Then $V(\phi,x)$ has finite rank for every $x\in G$. Moreover, each $V(\phi,x)$ is $\phi$-invariant. So we can assume without loss of generality that $G$ has finite rank and  $G=V(\phi,x)$ for some $x\in G$.

\smallskip
(e) Suppose that $G$ is a divisible torsion-free abelian group of finite rank $n> 0$. Then $G\cong \Q^n$ and by (a) we can assume that $\f$ is an automorphism of $G$. Let $A$ be the $n\times n$ matrix over $\Q$ of $\phi$, and let $P(t)$ be the characteristic polynomial of $A$.
Since $h(\phi)=0$, by (\ref{Yuz}) of Theorem \ref{Yuz} $s=1$ and all the eigenvalues $\lambda_i$ of $A$ have $|\lambda_i|=1$. In other words, $P(t)$ is a monic polynomial with all roots of modulus $1$. By Theorem \ref{Kronecker}, all the roots of $P(t)$ are roots of the unity. Then there exist a non-zero $x\in G$ and $m\in\N_+$ such that $\phi^m(x)=x$, that is, $x$ is a non-zero periodic point of $\phi$. In particular, $Q_1(G,\phi)\neq 0$.
\end{proof}

\begin{remark}
Note that in (e) we apply the Algebraic Yuzvinski Formula (\ref{Yuz}). This is its unique application in this paper, but Proposition \ref{h=0->qp} is a fundamental step in proving our main result, that is, Theorem \ref{Q=PGP=P}. This leaves open the question of whether it is possible to prove Proposition \ref{h=0->qp} without using the Algebraic Yuzvinski Formula. It would be sufficient to carry out the last step (e), i.e., prove that if $h(\phi)= 0$ for an automorphism $\phi$ of $\Q^n$, then $\phi$ has non-zero periodic points. 
\end{remark}

\begin{corollary}\label{blablabla}
Let $(G,\phi)$ be an algebraic flow. Then $\phi$ is algebraically ergodic if and only if $\phi$ has completely positive algebraic entropy. %$h(\phi)>\!\!> 0$.
\end{corollary}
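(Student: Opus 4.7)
The plan is to observe that one direction is already essentially in hand: Corollary~\ref{LastCoro} (which follows from the chain $\QQ(G,\phi)\subseteq\P(G,\phi)$ in Corollary~\ref{Q<PGP<P}) gives that completely positive algebraic entropy implies algebraic ergodicity, since $\P(G,\phi)=0$ forces $\QQ(G,\phi)=0$. So the only content to supply is the converse.

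For the converse, I would argue as follows. Suppose $\phi$ is algebraically ergodic, i.e.\ $\QQ(G,\phi)=0$. By Lemma~\ref{Q=0<->Q1=0} this is equivalent to $Q_1(G,\phi)=0$. Let $H$ be an arbitrary non-zero $\phi$-invariant subgroup of $G$; the goal is to show $h(\phi\restriction_H)>0$. Apply Lemma~\ref{Q_H}(a) to obtain
\[
Q_1(H,\phi\restriction_H)=Q_1(G,\phi)\cap H\subseteq Q_1(G,\phi)=0.
\]
Now invoke the contrapositive of Proposition~\ref{h=0->qp} for the algebraic flow $(H,\phi\restriction_H)$: since $H\neq 0$ and $Q_1(H,\phi\restriction_H)=0$, we must have $h(\phi\restriction_H)>0$. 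As $H$ was an arbitrary non-zero $\phi$-invariant subgroup, this is exactly completely positive algebraic entropy.

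There is no serious obstacle here; the whole argument is a direct bookkeeping reduction to Proposition~\ref{h=0->qp}, which carries all the weight (and which in turn rests on the Algebraic Yuzvinski Formula, as noted in the remark following its proof). The only subtlety worth flagging is to make sure one uses the ``hereditary'' form of algebraic ergodicity -- that passing to an invariant subgroup $H$ preserves the vanishing of $Q_1$ -- which is precisely what Lemma~\ref{Q_H}(a) supplies.
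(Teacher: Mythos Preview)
Your proof is correct and follows essentially the same approach as the paper: both directions rest on Corollary~\ref{LastCoro} for the easy implication and on the combination of Lemma~\ref{Q_H} with Proposition~\ref{h=0->qp} for the substantive one. The only cosmetic difference is that the paper argues by contradiction with the single subgroup $H=\P(G,\phi)$ (showing $\QQ(G,\phi)=0\Rightarrow\P(G,\phi)=0$), whereas you verify the definition of completely positive entropy directly on an arbitrary non-zero invariant $H$; the underlying mechanism is identical.
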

\begin{proof} According to Corollary \ref{LastCoro} it sufficies to prove (equivalently) that
\begin{equation}\label{0->0}
\QQ(G,\f)=0\ \Longrightarrow\ \P(G,\f)=0.
\end{equation}
To this end, let $H=\P(G,\f)$.  Then $\QQ(H,\f\restriction_H)=\QQ(G,\f)\cap H$ by Lemma \ref{Q_H}(b). By hypothesis $\QQ(G,\f)=0$, and so $\QQ(H,\f\restriction_H)=0$ as well. In particular, $Q_1(H,\f\restriction_H)=0$. So the assumption $\P(G,\f)\ne 0$, along with 
Proposition \ref{h=0->qp},  would give $h(\f\restriction_H)>0$, a contradiction. Hence, $\P(G,\f)=0$, and \eqref{0->0} is proved.
\end{proof}

Now we are in position to prove our Main Theorem: 

\begin{theorem}\label{Q=PGP=P}
Let $(G,\phi)$ be an algebraic flow. Then $\QQ(G,\f)=\pgp(G,\f)=\P(G,\f)$.
\end{theorem}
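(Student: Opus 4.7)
The plan is to establish the single missing inclusion $\mathbf{P}(G,\phi) \subseteq \mathfrak{Q}(G,\phi)$, since the other two links of the chain $\mathfrak{Q}(G,\phi) \subseteq \mathrm{Pol}(G,\phi) \subseteq \mathbf{P}(G,\phi)$ have already been provided by Corollary \ref{Q<PGP<P}. The natural strategy is to pass to the quotient $G/\mathfrak{Q}(G,\phi)$, where the induced endomorphism is algebraically ergodic by construction, and then invoke the equivalence between algebraic ergodicity and completely positive algebraic entropy.

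Concretely, I would let $\pi \colon G \to G/\mathfrak{Q}(G,\phi)$ be the canonical projection and denote by $\overline{\phi}$ the endomorphism it induces. Proposition \ref{P.T}(a) tells us that $\mathfrak{Q}(G/\mathfrak{Q}(G,\phi),\overline{\phi}) = 0$, that is, $\overline{\phi}$ is algebraically ergodic. By Corollary \ref{blablabla}, $\overline{\phi}$ then has completely positive algebraic entropy, which is precisely the statement that
\[
\mathbf{P}(G/\mathfrak{Q}(G,\phi),\overline{\phi}) = 0.
\]
Lemma \ref{pi<p}(b) now delivers $\pi(\mathbf{P}(G,\phi)) \subseteq \mathbf{P}(G/\mathfrak{Q}(G,\phi),\overline{\phi}) = 0$, and since $\ker \pi = \mathfrak{Q}(G,\phi)$, this forces $\mathbf{P}(G,\phi) \subseteq \mathfrak{Q}(G,\phi)$, closing the chain.

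The genuine weight of the argument lies entirely in Corollary \ref{blablabla}, which in turn rests on Proposition \ref{h=0->qp}; that is the deep step where the Algebraic Yuzvinski Formula (for reducing to an automorphism of $\mathbb{Q}^n$) and Kronecker's Theorem (to extract a periodic point from unimodular algebraic integers) do the real work. Once that implication \textquotedblleft$h(\phi)=0$ with $G\neq 0$ forces $Q_1(G,\phi)\neq 0$\textquotedblright{} is available, the rest of the proof of the Main Theorem is a short formal manoeuvre via the universal property of $\mathfrak{Q}(G,\phi)$ (Proposition \ref{P.T}) combined with the projection-compatibility of $\mathbf{P}$ (Lemma \ref{pi<p}(b)). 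I do not foresee any additional obstacle, only the need to cite the right auxiliary results in the correct order.
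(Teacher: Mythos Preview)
Your proposal is correct and follows essentially the same approach as the paper: both use Corollary \ref{Q<PGP<P} for the chain $\mathfrak{Q}\subseteq\mathrm{Pol}\subseteq\mathbf{P}$, then pass to $G/\mathfrak{Q}(G,\phi)$, invoke Proposition \ref{P.T}(a) to get algebraic ergodicity, apply the implication of Corollary \ref{blablabla} (i.e., \eqref{0->0}) to conclude $\mathbf{P}(G/\mathfrak{Q}(G,\phi),\overline\phi)=0$, and finish with Lemma \ref{pi<p}(b). Your commentary on where the real depth lies (Proposition \ref{h=0->qp} via the Algebraic Yuzvinski Formula and Kronecker's Theorem) is also accurate.
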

\begin{proof} By Corollary \ref{Q<PGP<P}, $\QQ(G,\f)\subseteq \pgp(G,\f) \subseteq \P(G,\f)$.   To prove that $\P(G,\f)\subseteq \QQ(G,\f)$, let $\overline\f:G/\QQ(G,\f)\to G/\QQ(G,\f)$ be the endomorphism induced by $\f$. By Proposition \ref{P.T}(a) $\QQ(G/\QQ(G,\f),\overline\f)=0$ and so 
\begin{equation}\label{LAAAAAAAST}
\P(G/\QQ(G,\f),\overline\f)=0.
\end{equation}
 by \eqref{0->0}. Let $\pi:G\to G/\QQ(G,\f)$ be the canonical projection. Since $\pi(\P(G,\f))\subseteq\P(G/\QQ(G,\f),\overline\f)$ by Lemma \ref{pi<p}(b),   from (\ref{LAAAAAAAST}) we conclude that $\P(G,\f)\subseteq\ker\pi=\QQ(G,\f)$.
\end{proof}

The following is a direct consequence of Proposition \ref{P.T}(a) and Theorem \ref{Q=PGP=P}. It is possible to prove it using the Addition Theorem \ref{AT} without applying Theorem \ref{Q=PGP=P} (see Remark \ref{uff-rem}). 

\begin{corollary}
Let $(G,\phi)$ be an algebraic flow. Then the induced endomorphism $\overline \f: G /\P(G,\f)\to G /\P(G,\f)$ has $h(\overline\phi)>\!\!>0$, i.e., $\P(G/\P(G,\f), \overline \f)=0$.
 \end{corollary}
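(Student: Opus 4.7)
The plan is to derive this corollary essentially mechanically from Proposition \ref{P.T}(a) combined with the Main Theorem (Theorem \ref{Q=PGP=P}). Write $P=\P(G,\f)$ and let $\pi:G\to G/P$ denote the canonical projection. The Main Theorem gives the identification $P=\P(G,\f)=\QQ(G,\f)$, so $G/P=G/\QQ(G,\f)$. Proposition \ref{P.T}(a) applied to $(G,\phi)$ states that the induced endomorphism $\overline{\f}$ on $G/\QQ(G,\f)$ is algebraically ergodic, i.e.\ $\QQ(G/\QQ(G,\f),\overline\f)=0$. Finally, apply the Main Theorem again, this time to the algebraic flow $(G/P,\overline\f)$: it yields $\P(G/P,\overline\f)=\QQ(G/P,\overline\f)=0$. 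By definition of completely positive algebraic entropy, this is exactly $h(\overline\phi)>\!\!>0$.

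There is essentially no obstacle: once the Main Theorem is available, the corollary is a one-line composition, and the whole argument is a transcription of two already proved facts. The only care required is to be explicit about the two distinct applications of Theorem \ref{Q=PGP=P} (once to replace $\P$ by $\QQ$ on $G$, once to do the same on the quotient $G/P$), so that the reader sees which inclusion of the chain $\QQ\subseteq\pgp\subseteq\P$ is being invoked where.

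As indicated in Remark \ref{uff-rem}, one can also bypass Theorem \ref{Q=PGP=P} and run a self-contained argument via the Addition Theorem \ref{AT}. In that alternative, one would set $H=\P(G/P,\overline\f)$, form $H'=\pi^{-1}(H)$, and observe that $H'$ is $\phi$-invariant with $P\subseteq H'$. The short exact sequence $0\to P\to H'\to H\to 0$ is $\phi$-equivariant, so Theorem \ref{AT} gives $h(\f\restriction_{H'})=h(\f\restriction_P)+h(\overline\f\restriction_H)=0+0=0$. The maximality in the definition of $P$ forces $H'\subseteq P$, hence $H'=P$ and $H=\pi(H')=0$. This second route avoids Theorem \ref{Q=PGP=P} entirely but relies on the full Addition Theorem, whereas the first route only uses the (already packaged) equality of the three subgroups. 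I would present the first (cleaner) version as the proof and mention the second in passing, matching the note following the corollary's statement.
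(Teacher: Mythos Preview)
Your proof is correct and matches the paper's approach exactly: the paper states that the corollary is a direct consequence of Proposition \ref{P.T}(a) and Theorem \ref{Q=PGP=P}, and also notes the alternative route via the Addition Theorem \ref{AT}, both of which you spell out. The two applications of Theorem \ref{Q=PGP=P} are handled correctly, and your Addition Theorem argument is precisely the intended alternative.
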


\begin{corollary}\label{0<->pgp}
Let $(G,\phi)$ be an algebraic flow. Then $h(\phi)=0$ if and only if $\f\in\pgp$. Consequently,  $h(\phi)>0$ if 
and only if there exists $F\in[G]^{<\omega}$ such that $\f\not\in\pgp_F$.
\end{corollary}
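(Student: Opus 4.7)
The plan is to derive this directly from the Main Theorem (Theorem \ref{Q=PGP=P}), combined with the characterizations of $\P(G,\f)$ and $\pgp(G,\f)$ as greatest invariant subgroups.

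First I would observe that $h(\phi)=0$ is equivalent to $G=\P(G,\f)$. This is an immediate consequence of the definition of the Pinsker subgroup: on the one hand, if $h(\phi)=0$, then $G$ itself belongs to the family of $\phi$-invariant subgroups on which $\phi$ has zero algebraic entropy, so $G\subseteq \P(G,\f)$, giving $G=\P(G,\f)$; conversely, if $G=\P(G,\f)$, then $h(\phi)=h(\phi\restriction_{\P(G,\f)})=0$ by definition of $\P(G,\f)$. (This equivalence was already recorded in the paragraph following Proposition \ref{P:existence}.)

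Analogously, $\f\in\pgp$ is equivalent to $G=\pgp(G,\f)$: indeed, if $\f\in\pgp$, then $G$ itself belongs to the family of $\phi$-invariant subgroups on which $\f$ has polynomial growth, so $G\subseteq \pgp(G,\f)$ by Lemma \ref{pgp-existence}, giving $G=\pgp(G,\f)$; the converse is immediate from the very definition of $\pgp(G,\f)$.

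Now the Main Theorem \ref{Q=PGP=P} gives $\pgp(G,\f)=\P(G,\f)$. Combining the three equivalences, $h(\phi)=0\iff G=\P(G,\f)\iff G=\pgp(G,\f)\iff \f\in\pgp$, which proves the first assertion. For the consequence, by the definition of polynomial growth, $\f\in\pgp$ is by definition the statement that $\f\in\pgp_F$ for every $F\in[G]^{<\omega}$; therefore, negating the equivalence $h(\phi)=0\iff \f\in\pgp$, we get $h(\phi)>0$ if and only if there exists $F\in[G]^{<\omega}$ with $\f\notin \pgp_F$.

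There is no real obstacle here: the entire content is packaged in Theorem \ref{Q=PGP=P}, and this corollary is merely a translation of that equality of subgroups into a statement about the global entropy of $\phi$ on $G$ itself.
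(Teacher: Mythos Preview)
Your proof is correct and follows exactly the same route as the paper: both use the equivalence $h(\phi)=0\iff G=\P(G,\f)$, invoke Theorem \ref{Q=PGP=P} to identify $\P(G,\f)=\pgp(G,\f)$, and conclude via the equivalence $\f\in\pgp\iff G=\pgp(G,\f)$. Your version is more detailed (spelling out both directions of each equivalence and the negation for the second assertion), but the argument is the same.
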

\begin{proof}
Since $h(\phi)=0$ if and only if $G=\P(G,\phi)$, and $\P(G,\phi)=\pgp(G,\phi)$ by Theorem \ref{Q=PGP=P}, we can conclude that $h(\phi)=0$ precisely when $\phi\in\pgp$.
\end{proof}

Corollary \ref{0<->pgp}  can be stated also as
\begin{equation}\label{forevery}
H(\phi,F)=0\ \text{for every}\ F\in[G]^{<\omega}\ \text{if and only if}\ \f\in\pgp_F\ \text{for every}\ F\in[G]^{<\omega}, 
\end{equation}
since $h(\phi)=0$ is equivalent to $H(\phi,F)=0$ for every $F\in[G]^{<\omega}$ and $\f\in\pgp$ is equivalent to $\f\in\pgp_F$ for every $F\in[G]^{<\omega}$. It is natural to ask if it possible to strengthen  \eqref{forevery} by removing the universal quantifier.  
Now we prove this more precise result, 
%
%We start the proof by noting that, for an algebraic flow $(G,\phi)$ and $F\in[G]^{<\omega}$, even if $H(\phi,F)$ is positive, the cardinality of the $n$-th $\phi$-trajectories of $F$ is always smaller than an exponential function, namely, $\tau_{\phi,F}(n)\leq|F|^n$ for every $n\in\N_+$.
% Theorem \ref{exp}
providing an important dichotomy for the growth of the algebraic entropy with respect to a non-empty finite subset. 

\begin{theorem}\label{exp}
Let $(G,\phi)$ be an algebraic flow and $F\in[G]^{<\omega}$. Then $$H(\phi,F)\begin{cases}>0 & \text{if and only if}\ \f\in\mathrm{Exp}_F,\\ =0 & \text{if and only if}\ \f\in\pgp_F.\end{cases}$$
\end{theorem}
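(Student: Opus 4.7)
The statement consists of two biconditionals. The ``trivial'' directions are: if $\phi\in\mathrm{Exp}_F$, i.e. $\tau_{\phi,F}(n)\geq b^n$ for some $b>1$, then $H(\phi,F)\geq\log b>0$; and if $\phi\in\pgp_F$, then $H(\phi,F)=0$ by Lemma \ref{pgp->h=0}(a). The content of the theorem therefore lies in the two converses, and in fact the two converses are not independent: together they amount to the dichotomy that \emph{every} $F$ falls in either $\mathrm{Exp}_F$ or $\pgp_F$. I would prove them separately, one by a soft submultiplicativity argument and the other by invoking the Main Theorem \ref{Q=PGP=P}.

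For $H(\phi,F)>0 \Rightarrow \phi\in\mathrm{Exp}_F$, the key observation is the identity
$T_{n+m}(\phi,F)=T_n(\phi,F)+\phi^n(T_m(\phi,F))$,
which immediately gives $\tau_{\phi,F}(n+m)\leq\tau_{\phi,F}(n)\cdot\tau_{\phi,F}(m)$. Hence the sequence $a_n:=\log\tau_{\phi,F}(n)$ is subadditive, and by Fekete's lemma
\[
H(\phi,F)=\lim_{n\to\infty}\frac{a_n}{n}=\inf_{n\in\N_+}\frac{a_n}{n}.
\]
Thus $\tau_{\phi,F}(n)\geq e^{H(\phi,F)\, n}$ for \emph{every} $n\in\N_+$, so whenever $H(\phi,F)>0$ one may take $b=e^{H(\phi,F)}>1$ to conclude $\phi\in\mathrm{Exp}_F$.

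For $H(\phi,F)=0 \Rightarrow \phi\in\pgp_F$, I would bootstrap from a single pair $(G,F)$ to the $\phi$-invariant subgroup $V(\phi,F)$ using Lemma \ref{proexp}, which gives $h(\phi\restriction_{V(\phi,F)})=0$. By definition of the Pinsker subgroup this forces $V(\phi,F)\subseteq\P(G,\phi)$; by the Main Theorem \ref{Q=PGP=P} we have $\P(G,\phi)=\pgp(G,\phi)$, so $\phi\restriction_{V(\phi,F)}\in\pgp$. In particular $\phi\restriction_{V(\phi,F)}\in\pgp_{F}$, and then Claim \ref{pgp-local} lifts this to $\phi\in\pgp_F$.

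The main obstacle was actually already cleared: namely the coincidence $\P(G,\phi)=\pgp(G,\phi)$ from Theorem \ref{Q=PGP=P}, without which one could only conclude $H(\phi,F)=0$ from polynomial growth, not the converse. With that equality in hand, the proof of Theorem \ref{exp} is essentially an assembly of (i) the subadditive/Fekete inequality to turn asymptotic exponential growth into a uniform exponential lower bound, and (ii) the passage from a single finite set $F$ with $H(\phi,F)=0$ to the invariant subgroup $V(\phi,F)$ via Lemma \ref{proexp}, followed by the Main Theorem and localization (Claim \ref{pgp-local}).
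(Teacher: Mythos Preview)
Your proof is correct and, for the polynomial-growth half, essentially identical to the paper's: both invoke Lemma~\ref{proexp} to pass from $H(\phi,F)=0$ to $h(\phi\restriction_{V(\phi,F)})=0$, and then appeal to the Main Theorem (the paper via Corollary~\ref{0<->pgp}, you via Theorem~\ref{Q=PGP=P} directly) to conclude $\phi\restriction_{V(\phi,F)}\in\pgp$ and hence $\phi\in\pgp_F$.

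For the exponential-growth half, however, your argument differs from the paper's. The paper proceeds elementarily: from $H(\phi,F)=a>0$ it extracts an $m$ with $\tau_{\phi,F}(n)>e^{na/2}$ for $n>m$, then patches the finitely many initial terms using $\tau_{\phi,F}(n)\geq 2$ (valid since $|F|\geq 2$), arriving at $b=\min\{\sqrt[m]{2},e^{a/2}\}$. You instead exploit the submultiplicativity $\tau_{\phi,F}(n+m)\leq\tau_{\phi,F}(n)\tau_{\phi,F}(m)$ to apply Fekete's lemma, obtaining $H(\phi,F)=\inf_n\frac{\log\tau_{\phi,F}(n)}{n}$ and hence the sharp uniform bound $\tau_{\phi,F}(n)\geq e^{nH(\phi,F)}$ for all $n$. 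Your route is cleaner and yields the optimal base $b=e^{H(\phi,F)}$ with no case splitting; the paper's route is more self-contained in that it does not name Fekete's lemma (though the same subadditivity underlies the existence of the limit in \eqref{lim-eq}, cited from \cite{DG}).
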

\begin{proof}
%The first part is Lemma \ref{exp-lemma}.
We prove first that $H(\phi,F)>0$ if and only if $\f\in\mathrm{Exp}_F$.
Note that both $H(\phi,F)>0$ and $\f\in\mathrm{Exp}_F$ imply $|F|\geq2$; indeed, if $|F|=1$, then $\tau_{\phi,F}(n)=1$ for every $n\in\N_+$.

Assume that $H(\phi,F)=a>0$. Consequently, there exists $m\in\N_+$ such that $\log\tau_{\phi,F}(n)>n\cdot \frac{a}{2}$ for every $n>m$. Then $\tau_{\phi,F}(n)>e^{n\cdot \frac{a}{2}}$ for every $n>m$. Since $|F|\geq2$, $\tau_{\phi,F}(n)\geq2$ for every $n\in\N_+$; in particular, $\tau_{\phi,F}(n)\geq(\sqrt[m]{2})^n$ for every $n\leq m$. For $b=\min\{\sqrt[m]{2},e^{\frac{a}{2}}\}$, we have $\tau_{\phi,F}(n)\geq b^n$ for every $n\in\N_+$, and this proves that $\phi\in\mathrm{Exp}_F$.

Suppose now that $\phi\in\mathrm{Exp}_F$. Then there exists $b\in\R_+$, $b>1$, such that $\tau_{\phi,F}(n)\geq b^n$ for every $n\in\N_+$. Hence $H(\phi,F)\geq \log b>0$.

By Lemma \ref{pgp->h=0}(a) if $\f\in\pgp_F$, then $H(\f,F)=0$. 
If $H(\f,F)=0$,  by Lemma \ref{proexp} $h(\phi\restriction_{V(\phi,F)})=0$. Corollary \ref{0<->pgp} implies in particular that $\f\in\pgp_F$. 
\end{proof}

Note that this more precise form of \eqref{forevery} follows from \eqref{forevery} (since in the proof of Theorem \ref{exp} we apply Corollary \ref{0<->pgp}); so they are equivalent.

%%%%%%%%%%%%%%%%%%%%%%%%%%%%%%%%%%%%%%%%%%%%%
%  ----------        A p p e n d i x         ---------------------------------------
%%%%%%%%%%%%%%%%%%%%%%%%%%%%%%%%%%%%%%%%%%%%%

\section{Appendix: Connection with the topological entropy}\label{top-sec}

For an abelian group $G$ the Pontryagin dual $\widehat G$ is $\mathrm{Hom}(G,\T)$ endowed with the compact-open topology \cite{P}. The Pontryagin dual of an abelian group is compact. Moreover, for an endomorphism $\phi:G\to G$, its adjoint endomorphism $\widehat\phi:\widehat G\to\widehat G$ is continuous. For basic properties concerning the Pontryagin duality see \cite{DPS} and \cite{HR}. 
For a subset $A$ of $G$, the annihilator of $A$ in $\widehat G$ is $A^\perp=\{\chi\in\widehat G:\chi(A)=0\}$, while for a subset $B$ of $\widehat G$, the annihilator of $B$ in $G$ is $B^\perp=\{x\in G:\chi(x)=0\ \text{for every }\chi\in B\}$.

\smallskip
We recall the definition of the topological entropy following \cite{AKM}. For a compact topological space $X$ and for an open cover $\mathcal U$ of $X$, let $N(\mathcal U)$ be the minimal cardinality of a subcover of $\mathcal U$. Since $X$ is compact, $N(\mathcal U)$ is always finite. Let $H(\mathcal U)=\log N(\mathcal U)$ be the \emph{entropy of $\mathcal U$}.
For any two open covers $\mathcal U$ and $\mathcal V$ of $X$, let $\mathcal U\vee\mathcal V=\{U\cap V: U\in\mathcal U, V\in\mathcal V\}$. Define analogously $\mathcal U_1\vee\ldots	\vee\mathcal U_n$, for open covers  $\mathcal U_1,\ldots,\mathcal U_n$ of $X$.
%A cover $\mathcal V$ of $X$ is a \emph{refinement} of a cover $\mathcal U$ of $X$ --- we write $\mathcal V\prec \mathcal U$ --- if for every $V\in\mathcal V$ there exists $U\in\mathcal U$ such that $V\subseteq U$. 
Let $\psi:X\to X$ be a continuous map and $\mathcal U$ an open cover of $X$. Then $\psi^{-1}(\mathcal U)=\{\psi^{-1}(U):U\in\mathcal U\}$.
The \emph{topological entropy of $\psi$ with respect to $\mathcal U$} is $H_{top}(\psi,\mathcal U)=\lim_{n\to\infty}\frac{H(\mathcal U\vee\psi^{-1}(\mathcal U)\vee\ldots\vee\psi^{-n+1}(\mathcal U))}{n},$ and the \emph{topological entropy} of $\psi$ is $h_{top}(\psi)=\sup\{H_{top}(\psi,\mathcal U):\mathcal U\ \text{open cover of $X$}\}.$

\bigskip
The following theorem connects the topological entropy and the algebraic entropy making use of the Pontryagin duality.

\begin{theorem}\label{Peters}\emph{\cite{Pet,DG}}
Let $(G,\phi)$ be an algebraic flow. Then $h(\phi)=h_{top}(\widehat \phi)$.
\end{theorem}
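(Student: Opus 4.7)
The plan is to apply the Uniqueness Theorem~\ref{UT} to the collection $h^{*}=\{h^{*}_G\}$ defined by $h^{*}_G(\phi)=h_{top}(\widehat\phi)$, where $\widehat\phi\colon\widehat G\to\widehat G$ is the continuous adjoint of $\phi$ on the compact abelian Pontryagin dual $\widehat G$. Since Pontryagin duality is a contravariant categorical equivalence between discrete abelian groups and compact abelian groups, each of the five axioms characterising $h$ should translate into a corresponding known property of $h_{top}$ on compact abelian groups.

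I would first verify the three structural axioms. For conjugation invariance, $\phi=\xi^{-1}\eta\xi$ dualises to $\widehat\phi=\widehat\xi\,\widehat\eta\,\widehat\xi^{-1}$ by contravariance, and $h_{top}$ is invariant under topological conjugation. For the direct limit axiom, $G=\varinjlim G_i$ with $\phi$-invariant $G_i$ dualises to an inverse limit $\widehat G=\varprojlim\widehat{G_i}$ of compact quotients; since each open cover of $\widehat G$ is refined by the pullback of an open cover of some $\widehat{G_i}$, the standard inverse-limit formula for $h_{top}$ delivers the required supremum. For the Addition Theorem, a $\phi$-invariant subgroup $H\leq G$ dualises to the $\widehat\phi$-invariant closed subgroup $H^{\perp}\leq\widehat G$ with $\widehat{G/H}\cong H^{\perp}$ and $\widehat H\cong\widehat G/H^{\perp}$, so the Addition Theorem for $h_{top}$ on compact abelian groups transfers directly to $h^{*}$.

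The two remaining axioms reduce to classical computations on the dual side. If $K$ is a finite abelian group then $\widehat{K^{(\N)}}\cong\widehat K^{\N}$ and $\widehat{\beta_K}$ is the one-sided Bernoulli shift on $\widehat K^{\N}$, whose topological entropy equals $\log|\widehat K|=\log|K|$. For the Algebraic Yuzvinski Formula on automorphisms of $\Q$, the adjoint acts on the solenoid $\widehat\Q$, and Yuzvinski's classical formula for the topological entropy of such solenoidal automorphisms agrees with the right-hand side of~\eqref{yuz-eq}.

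The main obstacle is the Addition Theorem for $h_{top}$ on arbitrary compact abelian groups: the classical references (Yuzvinski, Bowen) treat the compact \emph{metrisable} case, so one must reduce the general case to this one by writing an arbitrary compact abelian group as an inverse limit of compact metrisable ones and invoking continuity of $h_{top}$ along such limits. An alternative, more hands-on route would bypass the Uniqueness Theorem entirely and exploit the duality identity $\widehat\phi^{-i}(F^{\perp})=\phi^{i}(F)^{\perp}$, which yields $\bigcap_{i=0}^{n-1}\widehat\phi^{-i}(F^{\perp})=T_n(\phi,F)^{\perp}$ for each $F\in[G]^{<\omega}$; the index of this cotrajectory in $\widehat G$ equals $|\langle T_n(\phi,F)\rangle|$, which differs from $\tau_{\phi,F}(n)=|T_n(\phi,F)|$ only by a factor bounded polynomially in $n$ (by Claim~\ref{claim0}), so the two exponential growth rates coincide.
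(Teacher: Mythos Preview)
Your main approach via the Uniqueness Theorem~\ref{UT} is sound but genuinely different from the route the paper sketches. The paper does not prove Theorem~\ref{Peters} here; it records that the general statement is obtained in \cite{DG} by combining Weiss's theorem for torsion groups, Peters's theorem for automorphisms of countable groups, the \emph{algebraic} Addition Theorem~\ref{AT}, and inverse-limit continuity of $h_{top}$. Your proposal instead packages everything into a verification that $h^*_G(\phi)=h_{top}(\widehat\phi)$ satisfies axioms (a)--(e) and then invokes uniqueness. This is cleaner, but it trades the algebraic Addition Theorem for the \emph{topological} one on arbitrary compact abelian groups, which you correctly flag as the main obstacle. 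One caution: within the logical structure of this paper there is an apparent circularity, since Theorem~\ref{UT} relies on the Algebraic Yuzvinski Formula, whose only proof given here (in Section~\ref{top-sec}) already invokes Theorem~\ref{Peters}. The loop is not real---the Yuzvinski Formula concerns automorphisms of the countable group $\Q^n$, so Peters's original theorem suffices---but you should say so explicitly.

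Your alternative ``hands-on'' route at the end has a genuine gap. For $F\in[G]^{<\omega}$ with $G$ non-torsion, the subgroup $\langle T_n(\phi,F)\rangle$ is typically infinite (take $G=\Z$, $\phi=id_G$, $F=\{0,1\}$), so the index $[\widehat G:T_n(\phi,F)^\perp]=|\langle T_n(\phi,F)\rangle|$ is infinite and cannot be compared to $\tau_{\phi,F}(n)$ as you propose. Claim~\ref{claim0} bounds $|F_{(n)}|$, not the ratio $|\langle T_n(\phi,F)\rangle|/|T_n(\phi,F)|$, which is not even finite in general. This cotrajectory argument is essentially Weiss's proof and works only in the torsion case; getting past torsion is exactly the content of Peters's contribution and the subsequent reductions.
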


Weiss \cite{W} proved this theorem for torsion abelian groups $G$, Peters \cite{Pet} extended it to automorphisms of arbitrary countable abelian groups $G$. 
Finally, the above general form for arbitrary algebraic flows  was deduced in \cite{DG} from these two cases, the Addition Theorem \ref{AT} and the facts that
(i) $G$ is countable if and only if $\widehat G$ is metrizable; (ii) the topological entropy is ``continuous" with respect to inverse limits.
  
\medskip
\noindent{\bf Proof of Theorem \ref{Yuz}.}
Let $\phi$ is an automorphism of $\Q^n$. Then $\phi$ is $\Q$-linear, so described by some $A\in GL_n(\Q)$. Let $\widehat \phi:\widehat \Q^n \to\widehat\Q^n$ be the adjoint automorphism of $\phi$, where $\widehat \Q$ is the Pontryagin dual of $\Q$; in particular, $\widehat\Q$ is a $\Q$-vector space. Then $\widehat\phi$ is described by the matrix $A^t\in GL_n(\Q)$, which is the transposed of $A$, i.e., $\widehat\phi(x)= A^tx^t$ for $x=(x_1, \ldots, x_n) \in \widehat \Q^n$. 
According to the Yuzvinski Formula for the topological entropy of automorphisms of $\widehat\Q^n$ (see \cite{LW,WardLN,Y}) 
$$
h_{top}(\widehat\f)=\log s+ \sum_{|\lambda_i|>1}\log |\lambda_i|,
$$
where $\lambda_i$ are the eigenvalues of $A^t$ and $s$ is the least common multiple of the denominators of the coefficients of the (monic) characteristic polynomial of $A^t$. Since the eigenvalues and the characteristic polynomials of $A$ and $A^t$ coincide, and since $h(\phi)=h_{top}(\widehat\phi)$ by Theorem \ref{Peters}, we are done. \hfill $\square$
\medskip

We start with a general comparison between the algebraic and the topological entropy.

\begin{remark}\label{pontr-rem}
\begin{itemize}
\item[(a)] Let $(G,\phi)$ be an algebraic flow. Let $K=\widehat G$ and $\psi=\widehat\phi$. Let also $H$ be a $\phi$-invariant subgroup of $G$. By the Pontryagin duality $N=H^\perp$ is a closed $\psi$-invariant subgroup of $K$, and $N^\perp=H$. Moreover, we have the following commutative diagrams:
\begin{equation*}
\xymatrix{
H\ar[d]_{\phi\restriction_H}\ar@{^{(}->}[r] & G \ar[d]^\phi\ar@{->>}[r] & G/H\ar[d]^{\overline\phi} & & K/N & K\ar@{->>}[l] & N \ar@{_{(}->}[l] \\
H\ar@{^{(}->}[r] & G\ar@{->>}[r] & G/H & & K/N\ar[u]^{\overline\psi} & K \ar[u]^\psi\ar@{->>}[l] & N\ar[u]_{\psi\restriction_N} \ar@{_{(}->}[l]
}
\end{equation*}
The second diagram is obtained by the first one applying the Pontryagin duality functor. In particular, $\widehat{K/N}\cong H$ and $\widehat N\cong G/H$. Moreover, $\widehat{\overline\psi}$ is conjugated to $\phi\restriction_H$ and $\widehat{\psi\restriction_N}$ is conjugated to $\overline \phi$.
\item[(b)] Using Theorem \ref{Peters}, Fact \ref{conjugation_by_iso}(b) and item (a), we have:
$$h(\phi\restriction_H)=h_{top}(\overline\psi)\ \ \text{and}\ \ h(\overline\phi)=h_{top}(\psi\restriction_N).$$
\end{itemize}
\end{remark}

The following theorem connects the algebraic Pinsker subgroup with the topological Pinsker factor. In the sequel, for a compact abelian group and a continuous automorphism 
$\psi:K\to K$ of $K$ we denote by $\mathcal E(K,\psi)$  the (closed) subgroup $\P(\widehat K,\widehat\phi)^\perp$ of $K$: 

\begin{theorem}\label{P-P}
Let $K$ be a compact abelian group and $\psi:K\to K$ a continuous endomorphism of $K$. Then 
the  induced endomorphism   $\overline\psi:K/\mathcal E(K,\psi)\to K/\mathcal E(K,\psi)$
is the topological Pinsker factor of $(K,\psi)$.
\end{theorem}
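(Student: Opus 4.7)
The plan is to exploit Pontryagin duality, through the entropy identities in Remark \ref{pontr-rem}(b), to transfer the existence and universality of the Pinsker subgroup in the dual algebraic flow $(\widehat K,\widehat\psi)$ to an assertion about zero-entropy group quotients of $(K,\psi)$. Since $K$ is compact, $\widehat K$ is discrete, so every subgroup of $\widehat K$ is closed and the Pontryagin annihilator identities $N^{\perp\perp}=N$ and $H^{\perp\perp}=H$ apply for the objects in play.

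First, I would verify that $\overline\psi:K/\mathcal E(K,\psi)\to K/\mathcal E(K,\psi)$ has zero topological entropy. Setting $G=\widehat K$, $\phi=\widehat\psi$ and $H=\P(G,\phi)$, so that by definition $\mathcal E(K,\psi)=H^\perp$, Remark \ref{pontr-rem}(b) gives $h_{top}(\overline\psi)=h(\phi\restriction_H)$, and the right-hand side vanishes by the defining property of the Pinsker subgroup.

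Next, I would establish maximality among group-quotient factors: any closed $\psi$-invariant $N\subseteq K$ with $h_{top}(\overline\psi_N)=0$ on $K/N$ must contain $\mathcal E(K,\psi)$. Indeed, $H'=N^\perp\subseteq\widehat K$ is $\widehat\psi$-invariant and, by Remark \ref{pontr-rem}(b) again, $h(\phi\restriction_{H'})=h_{top}(\overline\psi_N)=0$; the universality of the Pinsker subgroup then forces $H'\subseteq\P(G,\phi)=H$, and passing to annihilators (which reverses inclusions) gives $N=(H')^\perp\supseteq H^\perp=\mathcal E(K,\psi)$.

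The main obstacle is to identify $(K/\mathcal E,\overline\psi)$ with the topological Pinsker factor $(Y_P,\tau_P)$ of \cite{BL}, since the latter is defined as the universal zero-entropy \emph{topological} factor, not merely the largest group-quotient factor with zero topological entropy. By Step 1 and the universal property of $(Y_P,\tau_P)$, there is a factor map $\rho:Y_P\to K/\mathcal E$ with $\rho\circ\pi_P=\pi$ for the canonical projections $\pi_P:K\to Y_P$ and $\pi:K\to K/\mathcal E$. To conclude, one shows that $Y_P$ is itself a group quotient $K/N_P$ of $K$ by some closed $\psi$-invariant subgroup $N_P$; well-definedness of $\rho$ then forces $N_P\subseteq\mathcal E(K,\psi)$, while Step 2 applied to $N_P$ (whose quotient $K/N_P=Y_P$ has zero topological entropy by definition of the Pinsker factor) yields $N_P\supseteq\mathcal E(K,\psi)$, so $N_P=\mathcal E(K,\psi)$ and $(K/\mathcal E,\overline\psi)\cong(Y_P,\tau_P)$. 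The crux is thus the classical but nontrivial fact that for compact abelian group flows the topological Pinsker factor is realized by a closed invariant subgroup quotient; this has to be verified either by direct analysis of the Pinsker factor construction on compact groups or by appeal to known structural results.
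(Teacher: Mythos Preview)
Your first two steps are precisely the paper's proof: set $G=\widehat K$, $\phi=\widehat\psi$, $H=\P(G,\phi)$, invoke Remark~\ref{pontr-rem}(b) to get $h_{top}(\overline\psi)=h(\phi\restriction_H)=0$, and then obtain maximality by dualising the universal property of $\P(G,\phi)$ among $\phi$-invariant subgroups of zero algebraic entropy. The paper stops there, concluding that $(K/\mathcal E(K,\psi),\overline\psi)$ is ``the greatest factor of $(K,\psi)$ with zero topological entropy.''

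Where you diverge is in your third step. You correctly observe that the duality argument only compares $(K/\mathcal E,\overline\psi)$ with \emph{group-quotient} factors $K/N$, whereas the topological Pinsker factor of \cite{BL} is universal among all zero-entropy \emph{topological} factors. The paper does not raise or resolve this distinction; it tacitly reads ``factor'' in the group-theoretic sense afforded by the Pontryagin correspondence. Your proposal is therefore more scrupulous in flagging the issue, but it does not close it either: you defer the key claim---that for a continuous endomorphism of a compact abelian group the Blanchard--Lacroix Pinsker factor is realised as a quotient by a closed invariant subgroup---to unspecified ``known structural results.'' So as written, your argument and the paper's have the same effective content for Steps~1--2, and the same residual gap at Step~3; you simply make that gap explicit.
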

\begin{proof}
Let $G=\widehat K$ and $\phi=\widehat \psi$. 
Apply Remark \ref{pontr-rem}(b) for $H=\P(G,\phi)$; then $h(\phi\restriction_{\P(G,\phi)})=h_{top}(\overline\psi)$.
Since $\P(G,\phi)$ is the greatest $\phi$-invariant subgroup of $G$ where the restriction of $\phi$ has zero algebraic entropy, it follows that $(K/\mathcal E(K,\psi),\overline\psi)$ is the greatest factor of $(K,\psi)$ with zero topological entropy.
\end{proof}

The ergodic transformations are the core of ergodic theory (see \cite{Wa} for the definition and main properties). For our purposes it will be enough 
to recall here the following characterization of ergodicity of a continuous automorphism $\psi$ of a compact abelian group $K$ considered with its Haar measure proved independently by Halmos and Rohlin: $\psi$ is ergodic if and only if $\widehat\psi^n(x)= x$ for $x\in K$ and $n\in\N_+$ implies $x=0$. 
In other words: 

\begin{theorem}\label{ergodic-P1} {\rm \cite{Wa}}
A continuous automorphism $\psi:K\to K$ of a compact abelian group $K$ is ergodic if and only if $P_1(\widehat K,\widehat\psi)=0$.
\end{theorem}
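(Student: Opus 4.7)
The plan is to reduce the ergodicity of $\psi$ to a spectral condition on the Koopman operator $U_\psi:L^2(K,\mu)\to L^2(K,\mu)$, $U_\psi f=f\circ\psi$, where $\mu$ is the Haar measure of $K$, and then use Fourier analysis on $K$ to translate this spectral condition into a condition on $\widehat\psi$. Recall first the standard characterization of ergodicity: $\psi$ is ergodic if and only if every $f\in L^2(K,\mu)$ with $U_\psi f=f$ is (a.e.) constant, equivalently, $1$ is a simple eigenvalue of $U_\psi$. By Pontryagin duality, $\widehat K$ is an orthonormal basis of $L^2(K,\mu)$, and each character satisfies $U_\psi\chi=\chi\circ\psi=\widehat\psi(\chi)$. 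Thus $U_\psi$ permutes the basis $\widehat K$ according to the action of $\widehat\psi$, which is the key rigidity I will exploit.

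Assume first that $P_1(\widehat K,\widehat\psi)=0$ and let $f=\sum_{\chi\in\widehat K}c_\chi\chi\in L^2(K,\mu)$ satisfy $U_\psi f=f$. Then
\begin{equation*}
\sum_{\chi}c_\chi\,\widehat\psi(\chi)=\sum_{\chi}c_\chi\chi,
\end{equation*}
so by uniqueness of Fourier coefficients $c_{\widehat\psi(\chi)}=c_\chi$ for every $\chi\in\widehat K$; that is, $\chi\mapsto c_\chi$ is constant on each $\widehat\psi$-orbit in $\widehat K$. Since $\sum_\chi|c_\chi|^2<\infty$, any orbit on which $c$ is non-zero must be finite, which forces the corresponding $\chi$ to be a periodic point of $\widehat\psi$. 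As $P_1(\widehat K,\widehat\psi)=0$, the only such orbit is $\{0\}$, whence $f=c_0\cdot 1$ is constant. Therefore $\psi$ is ergodic.

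For the converse I argue by contrapositive: suppose $P_1(\widehat K,\widehat\psi)\neq 0$ and pick a non-zero $\chi\in\widehat K$ with $\widehat\psi^n(\chi)=\chi$ for some minimal $n\in\N_+$. Then $\chi,\widehat\psi(\chi),\ldots,\widehat\psi^{n-1}(\chi)$ are pairwise distinct non-trivial characters, so the function
\begin{equation*}
f=\sum_{i=0}^{n-1}\widehat\psi^i(\chi)
\end{equation*}
is a non-zero element of $L^2(K,\mu)$ (it is a sum of distinct orthonormal basis vectors) which is manifestly $U_\psi$-invariant. Moreover $f$ is not (a.e.) constant, because its Fourier expansion contains no contribution from the trivial character $1=0\in\widehat K$. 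Hence $\psi$ fails to be ergodic.

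The main subtlety, and the step deserving the most care, is the equivalence between $L^2$-invariance and the set-theoretic definition of ergodicity (invariant measurable sets have Haar measure $0$ or $1$), which requires passing between measurable invariant sets and their indicator functions modulo $\mu$-null sets; this is standard but must be invoked to justify working entirely in $L^2$. The rest of the argument is a clean application of Pontryagin duality combined with the $\ell^2$-summability constraint on Fourier coefficients, which is what forces the orbits carrying non-zero mass to be finite.
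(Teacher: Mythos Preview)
Your argument is correct and is precisely the classical Halmos--Rohlin proof of this characterization. Note, however, that the paper does not supply its own proof of this theorem: it is stated with a citation to \cite{Wa} and attributed to Halmos and Rohlin, serving as an imported fact rather than something proved in the paper. So there is no ``paper's proof'' to compare with; your write-up simply fills in the standard argument behind the citation.
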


The next fact justifies the introduction of the concept of algebraically ergodic endomorphism given in Definition \ref{alg-erg}.

\begin{proposition}\label{ergodic}
Let $K$ be a compact abelian group and $\psi:K\to K$ a continuous automorphism of $K$. Then $\psi$ is ergodic if and only if $\widehat\psi$ is algebraically ergodic.
\end{proposition}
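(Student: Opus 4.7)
The plan is to reduce the statement to Theorem \ref{ergodic-P1} (Halmos--Rohlin), which characterizes ergodicity of $\psi$ by the vanishing of $P_1(\widehat{K},\widehat{\psi})$. So it suffices to show the equivalence
\[
\mathfrak{Q}(\widehat{K},\widehat{\psi})=0 \iff P_1(\widehat{K},\widehat{\psi})=0.
\]

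First I would observe that since $\psi$ is a continuous automorphism of $K$, its adjoint $\widehat{\psi}:\widehat{K}\to\widehat{K}$ is an automorphism of the discrete abelian group $\widehat{K}$; in particular $\widehat{\psi}$ is injective. The text notes that for injective endomorphisms one has $Q_n(G,\phi)=P_n(G,\phi)$ for every $n\in\mathbb{N}$ (an easy induction using \eqref{P-eq} and \eqref{Q-eq}). Applying this with $G=\widehat{K}$ and $\phi=\widehat{\psi}$ gives $Q_1(\widehat{K},\widehat{\psi})=P_1(\widehat{K},\widehat{\psi})$.

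Next, by Lemma \ref{Q=0<->Q1=0} applied to the algebraic flow $(\widehat{K},\widehat{\psi})$, we have $\mathfrak{Q}(\widehat{K},\widehat{\psi})=0$ if and only if $Q_1(\widehat{K},\widehat{\psi})=0$. Combining this with the previous equality yields
\[
\mathfrak{Q}(\widehat{K},\widehat{\psi})=0 \iff Q_1(\widehat{K},\widehat{\psi})=0 \iff P_1(\widehat{K},\widehat{\psi})=0.
\]
By Definition \ref{alg-erg}, the leftmost condition is precisely the algebraic ergodicity of $\widehat{\psi}$, while by Theorem \ref{ergodic-P1} the rightmost condition is precisely the ergodicity of $\psi$. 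This completes the equivalence.

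There is no serious obstacle here: the whole argument simply assembles already-established facts. The only thing to check is the automatic injectivity of $\widehat{\psi}$ (immediate, as $\widehat{\psi}$ is an automorphism when $\psi$ is) and the syntactic pass through Lemma \ref{Q=0<->Q1=0} and the remark that $Q_n=P_n$ in the injective case. All the heavy lifting sits in Halmos--Rohlin, i.e.\ Theorem \ref{ergodic-P1}, which is cited from \cite{Wa}.
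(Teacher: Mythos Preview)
Your proof is correct and follows essentially the same route as the paper's own argument: reduce to Theorem \ref{ergodic-P1}, use that $\widehat\psi$ is an automorphism to identify $Q_1$ with $P_1$, and then invoke Lemma \ref{Q=0<->Q1=0} to pass between $Q_1=0$ and $\mathfrak Q=0$. The paper's version is slightly terser (it only needs $Q_1=P_1$ rather than the full $Q_n=P_n$), but the logic is identical.
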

\begin{proof}
Let $G=\widehat K$ and $\phi=\widehat\psi$. By Theorem \ref{ergodic-P1} $\psi$ is ergodic if and only if $P_1(G,\phi)=0$. Since $\phi$ is an automorphism, $Q_1(G,\phi)=P_1(G,\phi)=0$. By Lemma \ref{Q=0<->Q1=0} this is equivalent to $\QQ(G,\phi)=0$.
\end{proof}

The following corollary connects the ergodic theory, the topological entropy and the algebraic entropy.

\begin{corollary}\label{Last}
Let $K$ be a compact abelian group and $\psi:K\to K$ a continuous automorphism of $K$. Then the following conditions are equivalent:
\begin{itemize}
\item[(a)] $\psi$ is ergodic;
\item[(b)] $\psi$ has completely positive topological entropy;
\item[(c)] $\widehat\psi$ is algebraically ergodic;
\item[(d)] $\widehat\psi$ has completely positive algebraic entropy.
\end{itemize}
\end{corollary}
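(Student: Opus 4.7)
The plan is to decompose the four-way equivalence into the three chains (a)$\Leftrightarrow$(c), (c)$\Leftrightarrow$(d) and (b)$\Leftrightarrow$(d). Put $G=\widehat K$ and $\f=\widehat\psi$, so that $\f$ is an automorphism of the (discrete) abelian group $G$. The first two equivalences can be essentially read off from results already proved, whereas (b)$\Leftrightarrow$(d) requires translating the topological notion of completely positive entropy through Pontryagin duality.

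For (a)$\Leftrightarrow$(c) I would apply Proposition \ref{ergodic} directly: ergodicity of $\psi$ is characterized by algebraic ergodicity of $\f$. For (c)$\Leftrightarrow$(d) I would apply Corollary \ref{blablabla} to the algebraic flow $(G,\f)$, which says exactly that algebraic ergodicity of $\f$ coincides with $\f$ having completely positive algebraic entropy. The substantial step is (b)$\Leftrightarrow$(d), and the key reformulation here is that a topological flow has completely positive topological entropy if and only if its topological Pinsker factor is trivial: on the one hand, the topological Pinsker factor is itself a factor with zero topological entropy and must therefore be trivial if (b) holds; on the other hand, any non-trivial factor with zero topological entropy is dominated by the Pinsker factor and forces the latter to be non-trivial. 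Now Theorem \ref{P-P} identifies the topological Pinsker factor as $(K/\mathcal{E}(K,\psi),\overline\psi)$ with $\mathcal{E}(K,\psi)=\P(G,\f)^\perp$, so triviality of the Pinsker factor amounts to $\mathcal{E}(K,\psi)=K$. Since $K$ is compact, $G$ is discrete and every subgroup of $G$ is its own biannihilator; hence $\mathcal{E}(K,\psi)=K$ is equivalent to $\P(G,\f)=\mathcal{E}(K,\psi)^\perp=0$, which by definition is condition (d).

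The main obstacle I foresee is the very first step of (b)$\Leftrightarrow$(d), i.e.\ carefully matching the verbal definition of completely positive topological entropy with the equivalent characterization via triviality of the topological Pinsker factor; once this passage is in hand, the rest is routine annihilator bookkeeping combined with invocations of Theorem \ref{P-P}, Proposition \ref{ergodic} and Corollary \ref{blablabla}.
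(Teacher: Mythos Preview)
Your proposal is correct and matches the paper's intended argument: the paper gives no explicit proof, simply stating that the corollary follows ``from the above results,'' and the results you invoke---Proposition \ref{ergodic} for (a)$\Leftrightarrow$(c), Corollary \ref{blablabla} for (c)$\Leftrightarrow$(d), and Theorem \ref{P-P} for (b)$\Leftrightarrow$(d)---are precisely those results. Your handling of (b)$\Leftrightarrow$(d) via triviality of the topological Pinsker factor is the right way to proceed, and it neatly avoids the subtlety that arbitrary factors of $(K,\psi)$ need not be group quotients, since Theorem \ref{P-P} guarantees the Pinsker factor itself is one.
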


From the above results we obtain: 

\begin{corollary}
%It follows fthat if
% $\psi$ is a continuous endomorphism a compact abelian group $K$, 
Let $K$ be a compact abelian group and $\psi:K\to K$ a continuous injective endomorphism of $K$.
Then the closed $\psi$-invariant subgroup $\mathcal E(K,\psi)$ of $K$ is the \emph{greatest domain of ergodicity} of $\psi$ in the following sense: 
\begin{itemize}
\item[(a)]  the restriction $\psi\restriction_{\mathcal E(K,\psi)}$ is ergodic; and 
\item[(b)]  $\mathcal E(K,\psi)$ is the greatest closed $\psi$-invariant subgroup of $K$ with the property (a).
\end{itemize}
\end{corollary}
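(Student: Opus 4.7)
The plan is to transfer everything to the Pontryagin dual and then appeal directly to the properties of $\QQ(G,\phi)$ and $\P(G,\phi)$ established above. Set $G = \widehat K$, $\phi = \widehat \psi$, and $H = \P(G,\phi) = \QQ(G,\phi)$ (equal by Theorem \ref{Q=PGP=P}), so that $\mathcal E(K,\psi) = H^\perp$. The hypothesis that $\psi$ is injective translates into $\phi$ being surjective, so Corollary \ref{NewCoro} gives that the induced endomorphism $\overline\phi: G/H \to G/H$ is an automorphism. Dualizing via Remark \ref{pontr-rem} this shows that $\psi\restriction_{\mathcal E(K,\psi)}$ is itself a continuous automorphism of $\mathcal E(K,\psi)$ — a point that has to be verified before it is meaningful to speak of its ergodicity.

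For (a), Proposition \ref{P.T}(a) yields $\QQ(G/H,\overline\phi) = 0$, i.e.\ $\overline\phi$ is algebraically ergodic. Since $\widehat{\psi\restriction_{\mathcal E(K,\psi)}}$ is conjugated to $\overline\phi$ by Remark \ref{pontr-rem}, the implication (c)$\Rightarrow$(a) of Corollary \ref{Last} (together with Fact \ref{conjugation_by_iso}(b), in its obvious topological counterpart for Pontryagin duals) gives that $\psi\restriction_{\mathcal E(K,\psi)}$ is ergodic.

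For (b), let $N' \leq K$ be a closed $\psi$-invariant subgroup such that $\psi\restriction_{N'}$ is ergodic (so in particular a continuous automorphism of $N'$). Put $H' = (N')^\perp$; then $H'$ is a closed $\phi$-invariant subgroup of $G$, and Remark \ref{pontr-rem} identifies $\widehat{\psi\restriction_{N'}}$ with the induced endomorphism $\overline\phi': G/H' \to G/H'$. The implication (a)$\Rightarrow$(c) of Corollary \ref{Last} gives that $\overline\phi'$ is algebraically ergodic, so $\QQ(G/H',\overline\phi') = 0$; by Proposition \ref{P.T}(b) this forces $H' \supseteq \QQ(G,\phi) = H$, and taking annihilators reverses the inclusion to give $N' \subseteq H^\perp = \mathcal E(K,\psi)$, as required.

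The main subtlety I anticipate is checking that the maps at hand really are continuous automorphisms, so that Corollary \ref{Last} is applicable: on the dual side this rests on the surjectivity of $\phi$ (equivalently, the injectivity of $\psi$) combined with Corollary \ref{NewCoro}, and on the primal side it rests on the fact that a continuous endomorphism of a compact abelian group is a topological automorphism precisely when its Pontryagin dual is an algebraic automorphism. Once this has been set up, the two halves reduce to a direct dualization: (a) reflects the general property $\QQ(G/\QQ(G,\phi),\overline\phi) = 0$ from Proposition \ref{P.T}(a), and (b) reflects the minimality of $\QQ(G,\phi)$ among $\phi$-invariant subgroups with algebraically ergodic quotient, as in Proposition \ref{P.T}(b).
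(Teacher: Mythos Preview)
Your proof is correct and follows essentially the same route as the paper's own argument: dualize, use Theorem \ref{Q=PGP=P} and Corollary \ref{NewCoro} to see that $\overline\phi$ is an automorphism, then invoke Proposition \ref{P.T} and Corollary \ref{Last} for both parts. The paper's proof is terser (it dispatches (b) with ``a similar argument, using item (b) of Proposition \ref{P.T}''), whereas you spell out the annihilator computation for (b) explicitly; your version is the more complete of the two.
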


\begin{proof}
Let $ G = \widehat{K}$ and $ \f=\widehat{\psi}: G \to G$. 
Our hypothesis implies that the adjoint endomorphism $\f $ is surjective. Then the induced endomorphism $\overline{\phi}: G/P_1(G,\phi)\to
 G/P_1(G,\phi)$ is an automorphism by Theorem \ref{Q=PGP=P} and Corollary \ref{NewCoro}. By Pontryagin duality, $G/P_1(G,\phi) $ coincides
 with the dual of $\mathcal E(K,\psi)$ and the dual of the automorphism $\overline{\phi}$
 coincides with the  restriction  $\psi\restriction_{\mathcal E(K,\psi)}: \mathcal E(K,\psi) \to \mathcal E(K,\psi)$. 
 Since $\overline{\phi}$ is algebraically ergodic by Proposition \ref{P.T}, one concludes with Corollary \ref{Last}  that $\psi\restriction_{\mathcal E(K,\psi)}$ is 
 ergodic. This proves (a). A similar argument, using item (b) of Proposition \ref{P.T} proves item (b). 
\end{proof}

\end{document}